\newcommand{\cupp}{\cup}
\newcommand{\inv}{\mathrm{inv}}
\renewcommand{\phi}{\varphi}
\newcommand{\z}{\mathbb{Z}}
\newcommand{\q}{\mathbb{Q}}
\newcommand{\f}{\mathbb{F}}
\newcommand{\U}{\mathbb{U}}
\newcommand{\uf}{\U_5(\f_2)}
\newcommand{\gal}{\operatorname{Gal}}
\newcommand{\cl}[1]{[#1]}
\newcommand{\li}[1]{{#1}^\times \!/ {#1}^{\times 2} }
\newcommand{\co}{\operatorname{Coind}}
\newcommand{\ev}{\operatorname{ev}}
\newcommand{\Hom}{\operatorname{Hom}}
\newcommand{\h}{\operatorname{H}}
\newcommand{\cent}{\mathscr{C}}
\newcommand{\E}{\mathscr{E}}
\newcommand{\spec}{\operatorname{Spec}}
\renewcommand{\tilde}{\widetilde}
\renewcommand{\bar}{\overline}
\newcommand{\bbar}[1]{\bar{\bar{#1}}}
\newcommand{\pierre}[1]{}
\newcommand{\bark}{\mkern2mu\overline{\mkern-2mu k\mkern-1.5mu}\mkern1.5mu}
\newcommand{\Gm}{\mathbf{G}_\mathrm{m}}
\newcommand{\Br}{\mathrm{Br}}
\newcommand{\Spec}{\mathrm{Spec}}
\renewcommand{\P}{{\mathbf P}}
\newcommand{\isoto}{\myxrightarrow{\,\sim\,}}
\def\myrightarrow{{\setbox\z@\hbox{$\rightarrow$}\dimen0\ht\z@\multiply\dimen0 6\divide\dimen0 10\ht\z@\dimen0\box\z@}}
\def\myrightarrowfill@{\arrowfill@\relbar\relbar\myrightarrow}
\newcommand{\myxrightarrow}[2][]{\ext@arrow 0359\myrightarrowfill@{#1}{#2}}
\def\myleftarrow{{\setbox\z@\hbox{$\leftarrow$}\dimen0\ht\z@\multiply\dimen0 6\divide\dimen0 10\ht\z@\dimen0\box\z@}}
\def\myleftarrowfill@{\arrowfill@\myleftarrow\relbar\relbar}
\newcommand{\myxleftarrow}[2][]{\ext@arrow 3095\myleftarrowfill@{#1}{#2}}
\newcommand{\sO}{{\mathscr O}}
\newcommand{\A}{{\mathbf A}}
\renewcommand{\P}{{\mathbf P}}
\newcommand{\Q}{{\mathbf Q}}
\newcommand{\Z}{{\mathbf Z}}
\newcommand{\nr}{\mathrm{nr}}
\newcommand{\et}{\text{\'et}}
\renewcommand{\phi}{\varphi}
\renewcommand{\emptyset}{\varnothing}
\newcommand{\Gal}{\operatorname{Gal}}
\newcommand{\F}{\mathbb{F}}
\renewcommand{\H}{\operatorname{H}}
\newtheoremstyle{pedro}{}{}{\itshape}{}{\sc}{~--}{ }{\thmname{#1}\thmnumber{ #2}\thmnote{ (#3)}}
\newtheoremstyle{pedrodef}{}{}{}{}{\sc}{~--}{ }{\thmname{#1}\thmnumber{ #2}\thmnote{ (#3)}}
\theoremstyle{pedro}
\newtheorem{lem}{Lemma}[section]
\newtheorem{thm}[lem]{Theorem}
\newtheorem{prop}[lem]{Proposition}
\newtheorem{maintheorem}{Theorem}
\theoremstyle{remark}
\newtheorem{rmk}[lem]{Remark}
\newtheorem*{rmk2}{Remark}
\theoremstyle{pedrodef}
\newtheorem{ex}[lem]{Example}
\title{Four-fold Massey products in Galois Cohomology}
\author{Pierre Guillot}
\address{
Pierre Guillot \\
Universit\'{e} de Strasbourg \& CNRS\\
Institut de Recherche Math\'{e}matique Avanc\'{e}e\\
UMR 7501, F-67000 Strasbourg, France}
\email{guillot@math.unistra.fr}
\author{J\'an Min\'a\v{c}}
\thanks{JM is partially supported by the Natural Sciences and Engineering Research Council of Canada (NSERC) grant R0370A01}
\address{
J\'an Min\'a\v{c} \\
Department of Mathematics\\
Western University\\
London, Ontario, N6A 5B7\\
Canada}
\email{minac@uwo.ca}
\author{Adam Topaz \\ With an Appendix by Olivier Wittenberg}
\thanks{AT was partially supported by EPSRC programme grant EP/M024830/1 Symmetries and Correspondences}
\address{
Adam Topaz \\
Mathematical Institute \\
University of Oxford \\
Andrew Wiles Building \\
Radcliffe Observatory Quarter \\
Woodstock Road \\
Oxford OX2 6GG \\
United Kingdom }
\email{topaz@maths.ox.ac.uk}
\keywords{Massey Products, Galois Cohomology, Splitting Variety, Group Cohomology.}
\subjclass[2010]{55S30, 12G05}
\let\oldtocsection=\tocsection
\let\oldtocsubsection=\tocsubsection
\let\oldtocsubsubsection=\tocsubsubsection
\renewcommand{\tocsection}[2]{\hspace{0em}\oldtocsection{#1}{#2}}
\renewcommand{\tocsubsection}[2]{\hspace{2em}\oldtocsubsection{#1}{#2}}
\renewcommand{\tocsubsubsection}[2]{\hspace{2em}\oldtocsubsubsection{#1}{#2}}
\numberwithin{equation}{section}
\newcommand{\Zcal}{\mathcal{Z}}
\newcommand{\Norm}{\operatorname{N}}
\newcommand{\Ecal}{\mathcal{E}}
\newcommand{\cores}{\operatorname{cores}}
\renewcommand{\H}{\operatorname{H}}
\begin{document}

\begin{abstract}
  In this paper, we develop a new necessary and sufficient condition for the vanishing of $4$-Massey products of elements in the mod-$2$ Galois cohomology of a field.
  This new description allows us to define a splitting variety for $4$-Massey products, which is shown in the Appendix to satisfy a local-to-global principle over number fields.
  As a consequence, we prove that, for a number field, all such $4$-Massey products vanish whenever they are defined.
  This provides new explicit restrictions on the structure of absolute Galois groups of number fields.
\end{abstract}
\maketitle

\tableofcontents

\section{Introduction}

Let $F$ be a field and $\bar F$ a separable closure of $F$.
The absolute Galois group of $F$, denoted $G_F := \Gal(\bar F/F)$ is an object of great interest in algebra and number theory.
Many aspects of modern Galois theory, in one way or another, aim to understand the structural properties of $G_F$.
Recent major results in Galois cohomology show that such absolute Galois groups are extremely rare among all profinite groups.
The most notable restriction on absolute Galois groups arises from the Bloch-Kato conjecture, which is now a theorem due to Rost-Voevodsky; see \cite{MR2811603} \cite{Rost-chain-lemma} \cite{MR2220090} \cite{MR2597737} \cite{zbMATH05594008}.
In particular, if $F$ contains a primitive $p$-th root of unity, then $\H^*(G_F,\z/p)$ is a \emph{quadratic algebra}.
More explicitly, this means that $\H^*(G_F,\z/p)$ is generated by elements of $\H^1(G_F,\z/p)$, and the relations are generated only by those relations appearing in degree $2$.
This is a very strong restriction on the group-theoretical structure of $G_F$.
Recently, other explicit structural restrictions on absolute Galois groups started to arise, based on the notion of \emph{Massey Products} in the context of Galois cohomology.

We will recall the definition of Massey products below, but we briefly note that, given $x_1,\ldots,x_n \in \H^1(G_F,\z/p)$ where $p$ is a prime, the $n$-Massey product, denoted $\langle x_1,\ldots,x_n \rangle$, is a (possibly empty) subset of $\H^2(G_F,\z/p)$.
In the case $n = 2$, one has a simple description in terms of the cup-product as $\langle x_1,x_2\rangle = \{x_1 \cup x_2 \}$.
Just as the cup-product $x_1 \cup x_2$ provides an obstruction to the existence of Heisenberg extensions of $F$ (of degree $p^3$), the $n$-Massey product provides as obstruction for the existence of higher $\z/p$-unipotent extensions.
In this respect, we are primarily interested in situations where the $n$-Massey product $\langle x_1,\ldots,x_n \rangle$ contains $0$.
In the sequel, when $\langle x_1,\ldots,x_n \rangle$ contains $0$, we will simply say that ``$\langle x_1,\ldots,x_n \rangle$ vanishes.''

In the breakthrough paper on the subject, Hopkins-Wickelgren \cite{hopkins} proved that, given $x_1,x_2,x_3 \in \H^1(G_F,\z/p)$, the triple Massey product $\langle x_1,x_2,x_3 \rangle$ always vanishes whenever it is nonempty, in the case where $F$ is a number field and $p = 2$.
This result was later extended by Min\'a\v{c}-T{\^a}n to arbitrary fields and $p = 2$ \cite{MT} partially based on ideas appearing in \cite{MR2018550}, and to arbitrary primes $p$ with $F$ a global field \cite{MR3452187}.

The end of 2014 and early 2015 saw a surge of activity on triple Massey products of elements of $\H^1(G_F,\z/p)$, significantly extending the results mentioned above.
Matzri \cite{Matzri} was first to announce his proof, extending these results to all primes $p$ and all fields $F$ which contain $\mu_p$.
Shortly thereafter, the arguments from {\em loc. cit.} were refined by Efrat-Matzri, and this work was eventually published in \cite{EM-triple}.
At around the same time as when \cite{EM-triple} was posted, Min\'a\v{c}-T{\^a}n \cite{MT2} also released their proof that triple Massey products of elements of $\H^1(G_F,\z/p)$ always vanish when defined, while also removing the condition that $F$ must contain $\mu_p$. 
Motivated by these results, Min\'a\v{c}-T{\^a}n \cite{MT} \cite{MT2} \cite{MT-kernel-unip} eventually formulated the so-called \emph{$n$-Massey Vanishing Conjecture}, which states that for an arbitrary field $F$, the $n$-Massey products of elements of $\H^1(G_F,\z/p)$ always vanish whenever they are non-empty.

The case of \emph{number fields} has always played a particularly important role in this context, as will be outlined in the historical discussion below.
In this respect, the present paper presents the first significant result concerning vanishing of $4$-Massey products in Galois cohomology.
Namely, this paper proves the following result.
\vskip 5pt
\noindent{\sc Main Theorem --} {\em Let $F$ be a number field, and let $x_1,x_2,x_3,x_4 \in \H^1(G_F,\z/2)$ be given.
If the Massey product $\langle x_1,x_2,x_3,x_4 \rangle$ is non-empty, then it vanishes.}
\vskip 5pt

To achieve this, we construct a ``splitting variety'' for the problem at hand, which works over \emph{any field} of characteristic $\neq 2$, and which is compatible with base-change.
More precisely, given a field $F$ of characteristic $\neq 2$ and $x_1,x_2,x_3,x_4 \in \H^1(G_F,\z/2)$, we construct an $F$-variety $\mathscr{X}_F$ such that the following are equivalent:
\begin{enumerate}
  \item The set of~$F$-rational points $\mathscr{X}_{F}(F)$ is non-empty.
  \item The $4$-Massey product $\langle x_1,x_2,x_3,x_4 \rangle$ vanishes.
\end{enumerate}
Furthermore, this construction is compatible with base-change, in the sense that for all $L/F$, one has $\mathscr{X}_F \otimes_F L = \mathscr{X}_L$. 

The study of the geometry and arithmetic of this variety falls within the reach of the recent results in~\cite{hw}.
Hence, when $F$ is a number field, it turns out to be possible to prove that (a variant of) $\mathscr{X}_F$ satisfies a local-to-global principle for the existence of rational points, as soon as the corresponding Brauer-Manin obstruction vanishes. 
In the proof of Theorem \ref{thm-main-number-fields}, we arrange the existence of local points satisfying certain additional conditions which force the corresponding Brauer-Manin obstruction to vanish; this proves the mod-$2$ case of the $4$-Massey vanishing conjecture over number fields.
Moreover, it turns out that the Brauer-Manin obstruction \emph{always vanishes} in ``generic'' situations, which allows us to prove a stronger version of our result in such cases (see Theorem \ref{thm-main-intro}).

The overall strategy we take in this paper is similar to the approach taken by Wickelgren-Hopkins \cite{hopkins}, primarily because of the fact that we use splitting varieties.
However, their methods are highly specialized to the case of triple Massey products.
Indeed, working with $n$-Massey products for $n \geq 4$ is substantially harder, especially with respect to their definability and indeterminacy (see the discussion below). 
Moreover, there is a technical, yet fundamental difference between the two approaches arising from the additional conditions one must arrange for the local points of the splitting variety.
The details and new tools we develop here are therefore completely different and more technically involved than those in {\em loc.\ cit.}

\[ \star \star \star  \]

We want to say more about the context, and the history of the subject. Massey products were first introduced in the context of algebraic topology by W. Massey \cite{MR0098366}, as a collection of ``higher-order cohomology operations'' defined in terms of the cochain algebra.
For example, Massey products of elements of $\H^1$ play a central role in (rational) homotopy theory, as being the most basic obstruction to 1-formality of manifolds -- see the work of Deligne, Griffiths, Morgan, and Sullivan \cite{MR0382702} \cite{MR0646078} \cite{MR516917} \cite{MR876163}.
Massey products in Galois cohomology were first systematically considered over \emph{number fields} by Morishita \cite{MR1925911} \cite{MR2004124}, Vogel \cite{MR2132673} and Sharifi \cite{MR2312552}.
These papers were primarily focused on Galois groups of extensions with restricted ramification over number fields, where Massey products have interesting connections with topics from Iwasawa theory, Milnor invariants, and R\'edei symbols.
Understanding Massey products in the Galois cohomology of number fields is particularly important, as they also show up in the context of Grothendieck's \emph{section conjecture} -- see Wickelgren \cite{wickelgren2009lower} \cite{MR3220523} \cite{MR3051256}.
More generally, Massey products in Galois cohomology play an important role in understanding the structure of nilpotent quotients of absolute Galois groups.
Therefore, a detailed understanding of Massey products in Galois cohomology could lead to significant generalizations of results in \cite{MR2863369} \cite{MR1405942} from the two-step nilpotent setting to more general settings.

The investigation of Massey products in Galois cohomology of arbitrary fields has recently started progressing very rapidly.
This surge started with the work of Hopkins-Wickelgren \cite{hopkins}, and further progressed by Min\'a\v{c}-T{\^a}n \cite{MTlocal} \cite{MT} \cite{MT2} \cite{MT3} and Efrat-Matzri \cite{MR3415664} \cite{EM-triple} \cite{MR3239143} \cite{Matzri}.
In fact, ideas related to vanishing of mod-$2$ triple Massey products already appeared in 2003 by Gao-Leep-Min\'a\v{c}-Smith \cite{MR2018550}, albeit using different terminology.
However, it is important to note that all of the results mentioned above were restricted to studying \emph{triple Massey products}.

Until now, the cases of the $n$-Massey vanishing conjecture for $n \geq 4$ have remained completely open.
Already in the case $n = 4$, having a \emph{non-empty} $4$-Massey product forces the vanishing of a new invariant defined by Isaksen \cite{MR3314129}.
In particular, prior to the present paper there was not even a strategy for approaching the $4$-Massey vanishing conjecture.

\[ \star \star \star  \]

We now introduce the basic concepts, and the precise notation, needed to state the main results of the paper.

\subsection{Basic Notation}
Throughout the paper, $F$ will denote a field of characteristic $\neq 2$.
We will use the usual notation $\H^*(F,A) := \H^*(G_F,A)$ for the Galois cohomology of $F$ with coefficients in the $G_F$-module $A$.

Recall that Kummer theory yields a canonical isomorphism
\[ F^\times/F^{\times 2} \xrightarrow{\cong} \H^1(F,\F_2). \]
For an element $x \in F^\times$, we write $[x]$ for its class in $F^\times/F^{\times 2}$, and $\chi_x \in \H^1(F,\F_2)$ for the image of $[x]$ under the Kummer isomorphism.
We will usually consider $\chi_x$ as a (continuous) homomorphism
\[ \chi_x : G_F \rightarrow \F_2 \]
via the canonical identification $\H^1(F,\F_2) = \Hom^{\rm cont}(G_F,\F_2)$.

Given $a,b \in F^\times$, we will usually write $(a,b)_F$ (or $(a,b)$ when $F$ is understood) for the cup-product $\chi_a \cup \chi_b \in \H^2(F,\F_2)$.
This notation borrows from the fact that $\H^2(F,\F_2)$ is canonically isomorphic to the $2$-torsion of $\operatorname{Br}(F)$, and that the class of the quaternion algebra $(a,b)_F$ corresponds to $\chi_a \cup \chi_b$ via this identification.

\subsection{The Groups $\U_n(\F_2)$}

The group $\U_n(\F_2)$, for $n \geq 2$, is comprised of the $n\times n$ upper-triangular matrices with entries in $\F_2$ with $1$'s along the diagonal.
The group $\U_{n}(\F_2)$ is endowed with $n-1$ homomorphisms
\[ s_1,\ldots,s_{n-1} : \U_{n}(\F_2) \rightarrow \F_2 \]
defined as $s_i(g) = g_{i,i+1}$ (the $i$-th near-diagonal component of $g$).

The center $\Zcal(\U_{n}(\F_2))$ of $\U_{n}(\F_2)$ consists of those matrices whose only possibly non-zero coefficient above the diagonal is in the top-right corner.
In particular, the map $g \mapsto g_{1,n}$ induces an isomorphism $\Zcal(\U_{n}(\F_2)) \cong \F_2$.
We write $\overline \U_{n}(\F_2) := \U_{n}(\F_2)/\Zcal(\U_{n}(\f_2))$, and consider $\U_{n}(\F_2)$ as an extension of $\overline\U_{n}(\F_2)$ by $\F_2$.
Furthermore, we denote by $\xi_n$ the element of $\H^2(\overline\U_{n}(\F_2),\F_2)$ associated to this extension.

\subsection{Massey Products}

Let $\Gamma$ be a profinite group, and let $x_1,\ldots,x_n \in \H^1(\Gamma,\F_2)$ be given.
In this context, we say that the $n$-Massey product $\langle x_1,\ldots,x_n \rangle$ is {\bf defined} provided that there exists a homomorphism $\phi : \Gamma \rightarrow \overline\U_{n+1}(\F_2)$ such that $x_i = s_i \circ \phi$ for $i = 1,\ldots,n$.
Furthermore, in this case we say that $\phi$ is a {\bf defining system} for the $n$-Massey product $\langle x_1,\ldots,x_n \rangle$.

The {\bf $n$-Massey product} associated to the defining system $\phi$, denoted by $\langle x_1,\ldots,x_n \rangle_\phi$, is defined to be $\phi^*\xi_{n+1} \in \H^2(\Gamma,\F_2)$, the pull-back of $\xi_{n+1}$ along $\phi$.
Note that one has $\langle x_1,\ldots,x_n \rangle_\phi = 0$ if and only if the map $\phi : \Gamma \rightarrow \overline\U_{n+1}(\F_2)$ lifts to a homomorphism $\tilde\phi : \Gamma \rightarrow \U_{n+1}(\F_2)$.

Finally, the {\bf $n$-Massey product} $\langle x_1,\ldots,x_n \rangle$ is defined as the set
\[ \langle x_1,\ldots,x_n \rangle := \{\langle x_1,\ldots,x_n \rangle_\phi\} \]
where $\phi$ varies over all defining systems for $\langle x_1,\ldots,x_n \rangle$.
In particular, the $n$-Massey product $\langle x_1,\ldots,x_n \rangle$ is non-empty if and only if it is defined.
As mentioned above we will be primarily interested in situations where the $n$-Massey product $\langle x_1,\ldots,x_n \rangle$ contains $0$, and we say that ``$\langle x_1,\ldots,x_n \rangle$ vanishes'' in such situations.
Note that, when we say ``$\langle x_1,\ldots,x_n \rangle$ vanishes'' we are also implying that $\langle x_1,\ldots,x_n \rangle$ is defined (as $\langle x_1,\ldots,x_n \rangle$ is non-empty).

\begin{rmk2}
  We have presented the definition of defining systems and Massey products in the context of group-cohomology from the point of view of \emph{embedding problems}.
  This is nevertheless equivalent to the classical (highly technical) definitions, by the work of Dwyer \cite{MR0385851}.
  For our purposes, Massey products are defined as above.
\end{rmk2}

We will simplify the notation somewhat in the context of Galois cohomology.
Namely, given $a_1,\ldots,a_n \in F^\times$, we write $\langle a_1,\ldots,a_n \rangle$ instead of $\langle \chi_{a_1},\ldots,\chi_{a_n} \rangle$.
We will follow this convention when talking about defining systems as well as Massey products themselves.

\subsection{Main Results}

We are now prepared to state our main theorems which characterize the vanishing of $4$-Massey products in mod-$2$ Galois cohomology.

\begin{maintheorem} \label{thm-main-four-cup-products-without-tildes}
  Let $F$ be a field of characteristic $\neq 2$.
  Let $a,b,c,d \in F^\times$ be given, choose square roots $\sqrt{a}$ resp.\ $\sqrt{d}$ of $a$ resp.\ $d$ in an algebraic closure of $F$, and put $E := F[\sqrt{a},\sqrt{d}]$.
  Then the following are equivalent:
  \begin{enumerate}
    \item The $4$-Massey product $\langle a,b,c,d \rangle$ vanishes (i.e., it is defined and contains $0$).
    \item There exist $B \in F[\sqrt{a}]$, $C \in F[\sqrt{d}]$ and $z_1,z_2 \in F^\times$ such that the following conditions hold:
    \begin{enumerate}
      \item One has $\Norm_{F[\sqrt{a}]/F}(B) = b \cdot z_1^2$ and $\Norm_{F[\sqrt{d}]/F}(C) = c \cdot z_2^2$.
      \item One has $(B,C)_{E} = 0$, $(B,c)_{F[\sqrt{a}]} = 0$, $(b,C)_{F[\sqrt{d}]} = 0$ and $(b,c)_F = 0$.
    \end{enumerate}
    \item There exist $B \in F[\sqrt{a}]$, $C \in F[\sqrt{d}]$ and $z_1,z_2 \in F^\times$ such that the following conditions hold:
    \begin{enumerate}
      \item One has $\Norm_{F[\sqrt{a}]/F}(B) = b \cdot z_1^2$ and $\Norm_{F[\sqrt{d}]/F}(C) = c \cdot z_2^2$.
      \item One has $(B,C)_{E} = (B,c)_{E} = (b,C)_{E} = (b,c)_E = 0$.
    \end{enumerate}
  \end{enumerate}
\end{maintheorem}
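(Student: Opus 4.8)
The plan is to translate the group-cohomological condition (1) into explicit field arithmetic by analysing defining systems entry-by-entry. First I would fix the standard coordinates on $\U_5(\F_2)$: a continuous map $\phi\colon G_F\to\U_5(\F_2)$ with entries $\phi_{ij}$ (for $1\le i<j\le 5$) is a homomorphism precisely when each $\phi_{ij}\colon G_F\to\F_2$ is a $1$-cochain with $d\phi_{ij}=\sum_{i<k<j}\phi_{ik}\cup\phi_{kj}$, the near-diagonal entries $\phi_{12}=\chi_a$, $\phi_{23}=\chi_b$, $\phi_{34}=\chi_c$, $\phi_{45}=\chi_d$ being genuine characters. A defining system for $\langle a,b,c,d\rangle$ is exactly such data omitting the central entry $\phi_{15}$ (a map to $\overline\U_5(\F_2)$), and $\langle a,b,c,d\rangle$ vanishes iff the defining system can be chosen so that the obstruction $2$-cocycle $\chi_a\cup\phi_{25}+\phi_{13}\cup\phi_{35}+\phi_{14}\cup\chi_d$ is a coboundary, i.e.\ so that $\phi_{15}$ exists.

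The next step is to read off the lower-order entries as norm data. Existence of $\phi_{13},\phi_{24},\phi_{35}$ forces $(a,b)_F=(b,c)_F=(c,d)_F=0$; via the identification $(a,b)_F=0\Leftrightarrow b\in\Norm_{F[\sqrt a]/F}(F[\sqrt a]^\times)$ and its analogue for $(c,d)_F=(d,c)_F$, this produces $B\in F[\sqrt a]$ and $C\in F[\sqrt d]$ with $\Norm_{F[\sqrt a]/F}(B)=b z_1^2$ and $\Norm_{F[\sqrt d]/F}(C)=c z_2^2$, which is condition (a). The freedom encoded by $z_1,z_2$ (modifying $B,C$ by elements of square norm) is precisely the freedom in the cochains $\phi_{13},\phi_{35}$. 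Concretely I would pin down cochain representatives for $\phi_{13},\phi_{35}$ built from $B,C$ over $F[\sqrt a],F[\sqrt d]$ and transported to $G_F$ via the coinduced-module (Shapiro) picture, so that the remaining entries become computable.

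The heart of the argument is to identify the obstructions governing $\phi_{14},\phi_{25},\phi_{15}$ with the quaternionic conditions in (b). Using the projection formula $\cores_{K/F}(\mathrm{res}\,x\cup y)=x\cup\cores_{K/F}(y)$ for $K=F[\sqrt a],F[\sqrt d]$ together with Kummer theory, I expect the class of $\phi_{14}$ (the triple product $\langle a,b,c\rangle$) to be governed by $(B,c)_{F[\sqrt a]}$, that of $\phi_{25}$ (the product $\langle b,c,d\rangle$) by $(b,C)_{F[\sqrt d]}$, and that of the top entry $\phi_{15}$ by $(B,C)_{E}$, with $(b,c)_F=0$ appearing as the compatibility of the middle entries. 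Restricting the whole defining system to $G_E$, where $\chi_a$ and $\chi_d$ vanish and the two outer terms of the obstruction collapse, yields the clean form in which all four quaternion classes vanish over $E$, giving (3). This produces the cycle (1)$\Rightarrow$(3), (3)$\Rightarrow$(2), (2)$\Rightarrow$(1), where (2)$\Rightarrow$(3) is immediate by restriction along $E\supseteq F[\sqrt a],F[\sqrt d],F$.

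The main obstacle is twofold. On the Massey side, a single defining system only yields the weaker \emph{necessary} conditions; to obtain a sharp equivalence one must show that whenever the quaternion conditions hold, the cochains $\phi_{14},\phi_{25}$ and the lifts of $B,C$ can be simultaneously adjusted so that the top obstruction becomes a coboundary, and tracking how modifying $\phi_{14},\phi_{25}$ (by characters) and $B,C$ (by square-norm elements, where $z_1,z_2$ re-enter) changes $(B,C)_E$, $(B,c)$ and $(b,C)$ at once is delicate bookkeeping. Equally delicate is the arithmetic descent (3)$\Rightarrow$(2): deducing $(B,c)_{F[\sqrt a]}=0$ from $(B,c)_E=0$ is not automatic, since the kernel of $\mathrm{res}\colon\H^2(F[\sqrt a])\to\H^2(E)$ is generated by classes $(d,\cdot)_{F[\sqrt a]}$, and one must use the norm hypotheses (via corestriction, together with $(b,c)_F=0$) to control this ambiguity. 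I expect this descent, rather than the construction of the defining system, to be the technical crux.
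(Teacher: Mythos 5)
Your forward direction is essentially sound and matches the paper's Theorem~\ref{thm-U5-implies-rational-point} in spirit: the existence of $\phi_{13},\phi_{35}$ gives the norm conditions, and the remaining entries are controlled by $(B,c)_{F[\sqrt a]}$, $(b,C)_{F[\sqrt d]}$, $(b,c)_F$ and $(B,C)_E$ (the paper gets this by projecting $\bar\U_5(\F_2)$ onto two copies of $D_4$ and using the map $\ker(s_1)\to D_4$, together with the \emph{consistency} refinement of Proposition~\ref{prop-D4-iff-cup-prop-iff-norm}, which pins down $B$ and $C$ against the cochain data --- a point your sketch glosses over but which is needed even here). The genuine gap is in the two steps you yourself defer as ``delicate bookkeeping'': they are the theorem, not bookkeeping, and your proposed route does not resolve either. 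For the converse, simultaneous adjustment of the coupled cochains $\phi_{13},\phi_{14},\phi_{24},\phi_{25},\phi_{35}$ is not a workable method: modifying $\phi_{13}$ by a character changes the cocycle constraint defining $\phi_{14}$, and so on, and there is no evident mechanism for converting the vanishing of cup products over the \emph{subfields} $F[\sqrt a]$, $F[\sqrt d]$, $E$ into a coherent global choice of cochains over $F$. The paper's missing idea is structural, not cochain-theoretic: it factors $\U_5(\F_2)\to D_4\times D_4$ with kernel $S$, observes (Lemma~\ref{lem-S-is-free}) that $S$ is a free rank-one $\F_2[G/N]$-module, hence $S\cong\co_{G_E}^{G_F}(\F_2)$ as a $G_F$-module, and applies Shapiro's lemma: the single obstruction to lifting $f\colon G_F\to D_4\times D_4$ to $\U_5(\F_2)$ lies in $\H^2(F,S)\cong\H^2(E,\F_2)$ and is identified, via the extension classes of $\cent(S)/S_i$ computed in Lemma~\ref{lem-classes-extensions-of-N}, with exactly $(B,C)_E$. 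Without this device (or an equivalent one), your plan for $(2)\Rightarrow(1)$ does not close.

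The second problem is your choice of cycle $(1)\Rightarrow(3)$, $(3)\Rightarrow(2)$, $(2)\Rightarrow(1)$, which manufactures a hard problem the paper never faces: the arithmetic descent $(3)\Rightarrow(2)$. For a \emph{fixed} $B$ this descent is false in general --- $(B,c)_E=0$ only forces $(B,c)_{F[\sqrt a]}\in (d)\cup\H^1(F[\sqrt a],\F_2)$ by the Arason sequence, as you note --- so one must change $B,C$, and it is not at all clear that corestriction arguments with the given witnesses can do this; in effect you would be re-proving the theorem inside this step. The paper's cycle is $(1)\Rightarrow(2)$ (the sharper forward statement), $(2)\Rightarrow(3)$ (trivial restriction), $(3)\Rightarrow(1)$ (Shapiro, as above), so that $(3)\Rightarrow(2)$ is only ever obtained by going around the cycle, with new $B,C$; your arrangement leaves you with \emph{two} unresolved hard implications instead of one resolved one. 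Finally, your sketch ignores the degenerate cases (one of $a$, $d$, or $ad$ a square in $F$), which Theorem~\ref{thm-main-four-cup-products-without-tildes} includes: there $[E:F]<4$, Shapiro must be applied with $S$ coinduced from a different subgroup (or the argument changes entirely), and the paper even invokes the triple Massey vanishing theorem when $a$ or $d$ is a square, before a separate case analysis reduces Theorem~\ref{thm-main-four-cup-products-without-tildes} to the tilde version, Theorem~\ref{thm-main-four-cup-products}. These cases require arguments of their own and cannot be absorbed into the generic one.
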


Note that condition (2) of Theorem \ref{thm-main-four-cup-products-without-tildes} can be readily described in terms of polynomial equations over $F$, hence defining an (affine) $F$-variety.
Theorem \ref{thm-main-four-cup-products-without-tildes} then shows that this variety has an $F$-point if and only if $\langle a,b,c,d \rangle$ vanishes.
Note, however, that these equations \emph{depend} on whether $a$, $d$, and/or $ad$ are squares in $F$ (the definition involves a Weil-restriction from $F[\sqrt{a},\sqrt{d}]$ to $F$); in other words, the variety is not compatible with base-change to extensions of $F$.
This is undesirable, as compatibility with base-change will be an important property towards the end of the paper.
With some additional work, we are able to obtain the following characterization theorem which provides us with our desired uniform polynomial equations.

\begin{maintheorem}
  \label{thm-main-fundamental-equations}
  Let $F$ be a field of characteristic $\neq 2$ and let $a,b,c,d \in F^\times$ be given.
  Consider the finite \'etale $F$-algebra
  \[ \Ecal := F[X,Y]/(X^2-a,Y^2-d). \]
  Then the following are equivalent:
  \begin{enumerate}
    \item The $4$-Massey product $\langle a,b,c,d \rangle$ vanishes.
    \item There exist $x_1,y_1,x_2,y_2 \in F$, $z_1,z_2 \in F^\times$ and $u,v \in \Ecal$ such that the following equations are satisfied:
    \begin{enumerate}
      \item One has $x_1^2 - y_1^2 \cdot a = b \cdot z_1^2$ and $x_2^2 - y_2^2 \cdot d = c \cdot z_2^2$.
      \item One has $u^2-\tilde B v^2 = \tilde C$ in $\Ecal$, where $\tilde B = x_1 + y_1 \cdot X \in \Ecal$ and $\tilde C = x_2 + y_2 \cdot Y \in \Ecal$.
    \end{enumerate}
  \end{enumerate}
\end{maintheorem}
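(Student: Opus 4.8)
The plan is to deduce this directly from Theorem~\ref{thm-main-four-cup-products-without-tildes}, using the \'etale algebra $\Ecal$ to repackage, in a base-change-compatible way, the field-theoretic data appearing there. Write $\Ecal_a := F[X]/(X^2-a)$ and $\Ecal_d := F[Y]/(Y^2-d)$, so that $\Ecal = \Ecal_a \otimes_F \Ecal_d$, with $\tilde B = x_1 + y_1 X \in \Ecal_a$ and $\tilde C = x_2 + y_2 Y \in \Ecal_d$. The first observation is that equation (a) forces $\Norm_{\Ecal_a/F}(\tilde B) = x_1^2 - y_1^2 a = b z_1^2 \neq 0$ and likewise $\Norm_{\Ecal_d/F}(\tilde C) = c z_2^2 \neq 0$, so $\tilde B$ and $\tilde C$ are units in $\Ecal$. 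Consequently $u^2 - \tilde B v^2 = \tilde C$ is solvable in $\Ecal$ if and only if $\tilde C$ is a norm from $\Ecal[T]/(T^2-\tilde B)$, which, working componentwise over the field factors of $\Ecal$ via Hilbert 90, is equivalent to the vanishing of the quaternion symbol $(\tilde B, \tilde C)_\Ecal \in \H^2(\Ecal,\F_2)$. Thus condition (2) is equivalent to equation (a) together with $(\tilde B,\tilde C)_\Ecal = 0$.

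The heart of the matter is to identify $(\tilde B, \tilde C)_\Ecal = 0$ with the cup-product conditions of Theorem~\ref{thm-main-four-cup-products-without-tildes}(2). Here I would exploit the norm equations cohomologically: under the Kummer identification they say precisely that $\cores_{\Ecal_a/F}\chi_{\tilde B} = \chi_b$ and $\cores_{\Ecal_d/F}\chi_{\tilde C} = \chi_c$ in $\H^1(F,\F_2)$. Using transitivity of corestriction and the projection formula, one then checks that the single class $(\tilde B,\tilde C)_\Ecal$ encodes all four symbols simultaneously: its projection to the factor $E$ is $(B,C)_E$, where $B, C$ are the images of $\tilde B, \tilde C$; its corestriction to $\Ecal_a$ is $(\tilde B, c)_{\Ecal_a}$; its corestriction to $\Ecal_d$ is $(b, \tilde C)_{\Ecal_d}$; and its corestriction all the way to $F$ is $(b,c)_F$. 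Hence $(\tilde B,\tilde C)_\Ecal = 0$ yields the four vanishings of Theorem~\ref{thm-main-four-cup-products-without-tildes}(2), which gives the implication (2)$\Rightarrow$(1) once the field-theoretic data are read off.

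For the converse (1)$\Rightarrow$(2), I would instead invoke the equivalent form Theorem~\ref{thm-main-four-cup-products-without-tildes}(3), whose four conditions all live over the field $E$. The key simplification is that over $E$ the classes $(B,C)_E$, $(B,\bar C)_E$, $(\bar B, C)_E$, $(\bar B, \bar C)_E$ (where $\bar B, \bar C$ denote the conjugates attached to the norm relations $\chi_b = \chi_B + \chi_{\bar B}$ and $\chi_c = \chi_C + \chi_{\bar C}$) form a single orbit under $\gal(E/F)$; since the Galois action fixes $0$, the conditions of Theorem~\ref{thm-main-four-cup-products-without-tildes}(3) collapse to the vanishing of exactly those symbols that arise as the components of $(\tilde B,\tilde C)_\Ecal$ after decomposing $\Ecal$ into its field factors, each isomorphic to $E$. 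From this one reconstructs $x_1,y_1,x_2,y_2,z_1,z_2$ and solves for $u,v$.

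The main obstacle is the degenerate cases, in which $a$, $d$, or $ad$ is a square in $F$: here $\Ecal$ is no longer a field but a product of copies of $E$, and, crucially, the degrees $[F[\sqrt a]:F]$ and $[F[\sqrt d]:F]$ occurring in Theorem~\ref{thm-main-four-cup-products-without-tildes} drop, so that the field-data $B \in F[\sqrt a]$, $C \in F[\sqrt d]$ no longer correspond tautologically to the $\Ecal$-elements $\tilde B, \tilde C$ (this is precisely the failure of base-change compatibility noted after Theorem~\ref{thm-main-four-cup-products-without-tildes}). The reconciliation requires the extra freedom in the \'etale representatives: for instance, when $a$ is a square the two factors of $\Ecal_a$ contribute two a priori independent symbol conditions involving factors $B_1, B_2$ with $B_1 B_2 = b z_1^2$, and one must verify that a single $E$-condition from Theorem~\ref{thm-main-four-cup-products-without-tildes}(3) can always be promoted to both by a suitable choice of $B_1, B_2$ (e.g.\ $B_1 = 1$, $B_2 = b z_1^2$), while conversely the product of the two \'etale conditions recovers the field condition. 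Pushing this bookkeeping uniformly through all the degenerate configurations alongside the generic case is where the genuine work lies.
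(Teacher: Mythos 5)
Your proposal is essentially correct, and all of its key claims check out: the componentwise reduction of $u^2-\tilde B v^2=\tilde C$ to vanishing of quaternion symbols over the field factors of $\Ecal$; the projection-formula computation showing that $(\tilde B,\tilde C)_{\Ecal}$ corestricts to $(\tilde B,c)_{F_a}$, to $(b,\tilde C)_{F_d}$, and finally to $(b,c)_F$; the Galois-orbit argument collapsing the four conditions of Theorem~\ref{thm-main-four-cup-products-without-tildes}(3) to the components of $(\tilde B,\tilde C)_{\Ecal}$; and the degenerate-case fixes (taking $B_1=1$, $B_2=bz_1^2$ in one direction, and multiplying the two component conditions to recover the field condition in the other -- note this is exactly where your second paragraph's ``read off the field data'' step silently fails when $a$ or $d$ is a square, since the image $B$ of $\tilde B$ then need not satisfy $\Norm_{F[\sqrt a]/F}(B)=bz_1^2$). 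The difference from the paper is one of routing. The paper never deduces Theorem~\ref{thm-main-fundamental-equations} from Theorem~\ref{thm-main-four-cup-products-without-tildes}; both are deduced from the intermediate Theorem~\ref{thm-main-four-cup-products}, whose data are already the \'etale elements $\tilde B\in F_a$, $\tilde C\in F_d$ with $\Norm_{F_a/F}(\tilde B)=b$ modulo squares. With that pivot, no tilde-versus-field reconciliation is needed: Theorem~\ref{thm-main-fundamental-equations} follows from Theorem~\ref{thm-main-four-cup-products} together with Proposition~\ref{prop-D4-iff-cup-prop-iff-norm} and Proposition~\ref{prop-equation-iff-four-cup-products}, and the latter is precisely your componentwise analysis (Lemmas~\ref{lem-eq-scrE-iff-cup-product-a-d-squares} and~\ref{lem-eq-scrE-iff-cup-product-a-square} carry out the same case division on whether $a$, $d$, $ad$ are squares, while Lemma~\ref{lem-redundancies} plays the role of your Galois-orbit step, using corestriction in place of conjugation). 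The bookkeeping you defer as ``where the genuine work lies'' is exactly the work the paper does once and for all in the ``Some trivial cases'' part of \S\ref{sec-cup-products}, when passing from Theorem~\ref{thm-main-four-cup-products} to Theorem~\ref{thm-main-four-cup-products-without-tildes}; by starting from the latter you are forced to run that translation in reverse. So your route is viable and closes as you indicate, but pivoting on the tilde version lets one skip the reconciliation entirely, which is what the paper does.
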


Note that the polynomial equations described by condition (2) of Theorem \ref{thm-main-fundamental-equations} actually have the same shape over any field which contains $a,b,c,d$.
The $F$-variety defined by these equations is what we will eventually call {\bf the splitting variety for $\langle a,b,c,d \rangle$}.

It is important to note that both Theorems \ref{thm-main-four-cup-products-without-tildes} and \ref{thm-main-fundamental-equations} will play a key role in this paper.  Indeed, condition (2) of Theorem \ref{thm-main-four-cup-products-without-tildes} has an immediate and direct formulation involving cup-products in mod-2 Galois cohomology (we exhibit some direct applications, over any field, in \S\ref{sec-first-applications}).  On the other hand, condition (2) of Theorem \ref{thm-main-fundamental-equations} defines a splitting variety whose geometry is remarkably simple. In generic situations, it satisfies the Hasse principle for the existence of rational points; in all cases, the local-to-global principle is governed by the Brauer-Manin obstruction, which takes a simple form here. See Theorem~\ref{app:th:numberfield} in the Appendix for a detailed statement. We thereby obtain a more precise version of the Theorem announced above:

\begin{maintheorem} 
  \label{thm-main-intro}
  Let $F$ be a number field, and let $a,b,c,d \in F^\times$ be given.
  Then the following are equivalent:
  \begin{enumerate}
    \item The $4$-Massey product $\langle a,b,c,d \rangle$ vanishes.
    \item The $4$-Massey product $\langle a,b,c,d \rangle$ is defined.
  \end{enumerate}
  If furthermore $ad$, $ab$, $cd$ are all non-squares in $F$, then the above conditions are further equivalent to:
  \begin{enumerate}
    \item[(3)] One has $(a,b)_F = (b,c)_F = (c,d)_F = 0$.
  \end{enumerate}
\end{maintheorem}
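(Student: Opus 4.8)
The implication $(1)\Rightarrow(2)$ is immediate: a Massey product that vanishes is by definition non-empty, hence defined. The remaining equivalences are where the work lies, and my plan is to route everything through the splitting variety of Theorem~\ref{thm-main-fundamental-equations} together with the arithmetic of the Appendix. The key translation is that, by Theorem~\ref{thm-main-fundamental-equations}, $\langle a,b,c,d\rangle$ vanishes if and only if the $F$-variety $\mathscr{X}_F$ cut out by the equations of condition~(2) has an $F$-rational point, while by Theorem~\ref{app:th:numberfield} this variety satisfies a local-to-global principle over number fields whose only obstruction is of Brauer--Manin type and takes an explicit, manageable form. The proof of $(2)\Rightarrow(1)$ --- equivalently, of the Main Theorem~\ref{thm-main-number-fields} --- then decomposes into two tasks: first, exhibit an adelic point, namely an $F_v$-point of $\mathscr{X}_F$ at every place $v$; and second, arrange this family of local points so that it is orthogonal to the Brauer--Manin obstruction.

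For the first task I would use that definability descends to completions: if $\langle a,b,c,d\rangle$ is defined over $F$, then so is its restriction to each $G_{F_v}$, which in particular forces the consecutive cup products $(a,b)_{F_v}$, $(b,c)_{F_v}$, $(c,d)_{F_v}$ to vanish (these being exactly the obstructions to filling in the second super-diagonal of a matrix in $\U_5(\F_2)$). Feeding these local vanishings into the norm-and-cup description of Theorem~\ref{thm-main-four-cup-products-without-tildes} --- where over a local field $(a,b)_{F_v}=0$ makes $b$ a norm from $F_v[\sqrt a]$, and similarly for $c$ --- should produce local solutions $B_v, C_v, z_1, z_2$, after checking, using the simple structure of the Brauer groups of local fields and their \'etale extensions, that the auxiliary quaternionic conditions can be met. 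The second task is the genuinely delicate one: instead of accepting an arbitrary adelic point, I would steer the local choices at the finitely many ``bad'' places so as to satisfy the additional constraints identified in the Appendix that force the local Brauer--Manin evaluations to cancel, while respecting the global reciprocity law that the relevant Brauer class obeys.

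Turning to the supplementary equivalence, note that $(2)\Rightarrow(3)$ requires no hypothesis on $a,b,c,d$: a defining system $\phi\colon G_F\to\overline{\U}_5(\F_2)$ forces precisely $(a,b)_F=(b,c)_F=(c,d)_F=0$, again because these are the obstructions to the second super-diagonal. For the converse $(3)\Rightarrow(1)$ under the hypothesis that $ad$, $ab$, $cd$ are non-squares, I would exploit that these genericity conditions place us in the regime where, by the Appendix, the Brauer--Manin obstruction to $\mathscr{X}_F$ \emph{vanishes identically}. It therefore suffices to produce local points everywhere, and local solvability now follows from~(3) alone by exactly the computation of the previous paragraph, read over each $F_v$ (with $(b,c)_F=0$ supplying the last local cup-product condition directly). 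Local-to-global then yields an $F$-point of $\mathscr{X}_F$, hence $(1)$ by Theorem~\ref{thm-main-fundamental-equations}; since vanishing implies definability, this simultaneously recovers $(2)$ and sidesteps the subtle question of assembling the two constituent triple Massey products into a single defining system.

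The main obstacle is the second task in $(2)\Rightarrow(1)$: constructing local points compatible with the global Brauer--Manin constraint. This is exactly where the argument must depart from the triple-product case of Hopkins--Wickelgren, since for $n=4$ one must simultaneously control the indeterminacy and the extra defining-system data at every place. I expect the careful bookkeeping of these local conditions --- and in particular their realizability at the archimedean and dyadic places, where norm groups and $\H^2$ behave least uniformly --- to absorb most of the effort of the proof.
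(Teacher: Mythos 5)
Your overall frame (splitting variety plus the Appendix's local-to-global theorem) is the paper's frame, and your treatment of $(1)\Rightarrow(2)$, of $(2)\Rightarrow(3)$, and of $(3)\Rightarrow(1)$ in the generic case matches the paper's Theorem~\ref{thm-main-number-fields-generic} (where you gesture at a local computation, the paper cites \cite{MTlocal}, Prop.~4.1, for local solvability). But there is a genuine gap in your proof of $(2)\Rightarrow(1)$, exactly at what you call the second task. Your plan uses global definability only through its restrictions to the completions $F_v$ --- that is, only through the vanishing of $(a,b)$, $(b,c)$, $(c,d)$ over every $F_v$ --- and then hopes to ``steer the local choices at the finitely many bad places'' so that $\sum_v \inv_v\left((\beta_v,c)\right)$ and $\sum_v \inv_v\left((b,\gamma_v)\right)$ vanish. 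This cannot work: those local consequences are strictly weaker than global definability, and Example~\ref{app:example} exhibits $a,b,c,d$ over $\Q$ for which all three cup-products vanish globally (so the splitting variety has points in every completion), yet at the place $v=17$ \emph{every} local point has nontrivial Hilbert symbol $(\beta_v,c)$ while at all other places this symbol is forced to be trivial; no steering can then make the product equal $1$, and indeed $X(\Q)=\emptyset$ and the Massey product is not even defined there. Since your argument would apply verbatim to that example, it must be missing an essential use of the global defining system beyond its localizations.

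What the paper actually does at this step is apply Theorem~\ref{thm-U5-implies-rational-point} to the \emph{global} defining system, obtaining global elements $B_0 \in F[\sqrt a]$, $C_0 \in F[\sqrt d]$ of the right norms which satisfy $(B_0,c)_{F[\sqrt a]} = 0$ and $(b,C_0)_{F[\sqrt d]} = 0$, together with --- this is the crucial part~(3) of that theorem --- a class $u \in \h^2(F,\f_2)$ whose restriction to $E$ equals $(B_0,C_0)_E$. These $B_0, C_0$ are then used to build the variety $X_F$; note also that the Appendix's Theorem~\ref{app:th:numberfield} is proved for $X_F$, not for $\mathscr{X}_F$ as in your write-up, so some global choice of $B_0,C_0$ is needed just to invoke it. With this data, local points with \emph{identically vanishing} Brauer contributions are produced at every place, uniformly: at each $v$ where $a$ or $d$ is a local non-square, $(B_0,C_0)$ dies over $F_v[\sqrt a,\sqrt d]$ because it is the restriction of $u_{F_v} \in \h^2(F_v,\f_2)$ and restriction to a proper extension of local fields kills $\h^2$; one takes $\beta_v=\gamma_v=1$. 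At the places where $a$ and $d$ are both local squares, one takes $\beta_v,\gamma_v$ to be the conjugates of $B_0, C_0$, and the global identities $(B_0,c)=(b,C_0)=(b,c)=0$ force $(\beta_v,c)_{F_v} = (b,\gamma_v)_{F_v}=0$. To repair your proposal you would need to incorporate precisely this mechanism: a choice of $B_0,C_0$ coming from the defining system, and the class $u$ witnessing that $(B_0,C_0)_E$ is defined over $F$.
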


Theorem \ref{thm-main-intro} will be proved in Theorems \ref{thm-main-number-fields}, \ref{thm-main-number-fields-generic} below. 
It is natural to ask whether the implication $(3) \implies (1)$ holds in general.
It turns out that this implication \emph{fails in general}, even over number fields.
See Remark \ref{rmk-generic}, Example \ref{app:example}, and the surrounding discussions for more details.

\[ \star \star \star  \]
   {\em Organization of the paper.} After some preliminaries in the next section, we prove Theorem~\ref{thm-main-four-cup-products-without-tildes} in~\S\ref{sec-cup-products}.
   In section \ref{sec-first-applications}, we give some first applications of Theorem \ref{thm-main-four-cup-products-without-tildes} by proving a few cases of the 4-Massey vanishing conjecture by hand, over arbitrary fields.
   Then in~\S\ref{sec-splitting} we introduce the splitting variety $\mathscr{X}_F$, as well as a variant $X_F$ which will simplify some calculations. 
   The next section, that is~\S\ref{sec-applications}, gives a proof of Theorem~\ref{thm-main-intro}.
   Finally in~\S\ref{sec-explicit} we make some of our constructions explicit, and explain concretely how to get a Galois extension with group~$\uf$ when~$\langle a, b, c, d \rangle$ vanishes and~$a,b, c, d$ are linearly independent modulo squares; incidentally, this gives an alternative, more pedestrian proof for the implication $(3) \implies (1)$ in Theorem~\ref{thm-main-four-cup-products-without-tildes} in this case.

   An Appendix by Wittenberg shows that the variety $X_F$ satisfies the local-to-global principle alluded to above, which is of course a crucial ingredient for Theorem~\ref{thm-main-intro}.

   \bigskip

   \noindent {\em Ackowledgements.} First and foremost, we have been in touch very frequently with Olivier Wittenberg while writing this paper. We thank him warmly for writing the Appendix, and for various suggestions. We also thank Adriano Marmora for suggesting us to contact Olivier in the first place.
   Moreover, we warmly thank Nguy\~{\^{e}}n Duy T{\^a}n for his interest and insight, and for his extremely inspiring collaboration with the second author.
   It is a great pleasure to also thank B.\  I.\  Chetard, I.\  Efrat, Ch.\  Kapulkin, E.\  Matzri, C.\  McLeman, D.\  Neftin, M.\  Palaisti, C.\  Quadrelli and K.\  Wickelgren for inspiring discussions.
   Finally, we thank the referee for helpful comments regarding the exposition.

\section{Preliminaries} \label{sec-defs}

\pierre{No need for the \S~ on notation, it's all in the intro. Also the def of~$F_a$ will be at the beginning of the next section.}

\subsection{The groups~$\U_3(\f_2)$ and $\uf$} \label{subsec-notation-Un}
 We shall need special notation for these two groups. First note that we {\em define} the dihedral group of order~$8$ to be~$\U_3(\f_2)$, and we may write~$D_4 = \U_3(\f_2)$. An element~$g \in D_4$ is a matrix of the form
\[ g = \left(\begin{array}{ccc}
  1 & s_1(g) & t(g) \\
  0 & 1 & s_2(g) \\
  0 & 0 & 1
\end{array}\right) \, .   \]
Thus~$D_4$ is equipped with maps~$s_1, s_2, t \colon D_4 \to \f_2$. (The letter~$t$ is for ``top''.) Note that the first two are group homomorphisms, but~$t$ is not. Our favourite generators are the involutions~$\sigma_1$ and~$\sigma_2$, with~$s_i(\sigma_i) = 1$ and~$s_j(\sigma_i) = t (\sigma_i) = 0$ for~$j \ne i$.

Similarly, an element~$g \in \uf$ will be written
\[ \left(\begin{array}{ccccc}
  1 & s_1(g) & t_1(g) & u_1(g) & z(g) \\
  0 & 1 & s_2(g) & u_3(g) & u_2(g) \\
  0 & 0 & 1 & s_3(g) & t_2(g) \\
  0 & 0 & 0 & 1 & s_4(g) \\
  0 & 0 & 0 & 0 & 1
\end{array}\right) \, .  \]
This endows~$\uf$ with maps~$s_1, \ldots, z \colon \uf \to \f_2$, and~$s_i$ is a group homomorphism for~$1 \le i \le 4$.

More generally the group~$\U_n(\f_2)$ has homomorphisms~$s_i \colon \U_n(\f_2) \to \f_2$ for~$1 \le i \le n-1$, already mentioned in the Introduction, obtained by looking at the entries on what we call the near-diagonal. If we define elements~$\sigma_i$ by requiring~$s_i(\sigma_i)= 1$ while all the other entries of~$\sigma_i$ above the diagonal are~$0$, then each~$\sigma_i$ is an involution, and these generate~$\U_n(\f_2)$.

We note that~$\U_n(\f_2)$ has an automorphism which exchanges~$\sigma_i$ with~$\sigma_{n-i}$. Most of our considerations respect this symmetry, and this motivates the notation above for~$\uf$. (The automorphism is given by ``the transpose but along the other diagonal'', followed by~$g \mapsto g^{-1}$.)

\subsection{Around the group~$D_4$}

We write~$s= (s_1, s_2) \colon D_4 \to C_2 \times C_2$, where we have identified~$\f_2$ with the cyclic group of order~$2$ in multiplicative notation, written~$C_2$. There is an exact sequence
\[ 1 \longrightarrow \f_2 \longrightarrow D_4 \stackrel{s}{\longrightarrow} C_2 \times C_2 \longrightarrow 1 \, ,  \]
the kernel of~$s$ being generated by~$[\sigma_1, \sigma_2]$ (which is the element~$g$ with~$t(g)= 1$ and~$s_i(g) = 0$, $i= 1, 2$).

The quotient group~$C_2 \times C_2$ is generated by the images of~$\sigma _1, \sigma _2$, written~$\bar \sigma_1, \bar \sigma_2$. The cohomology group~$\h^1(C_2^2, \f_2) = \Hom(C_2^2, \f_2)$ is endowed with the dual basis~$\bar s_1, \bar s_2$. The next Lemma is very well-known:

\begin{lem} \label{lem-coho-class-D4}
The cohomology class of the above extension is~$\bar s_1 \bar s_2 \in \h^2(C_2^2, \f_2)$. \hfill $\square$
\end{lem}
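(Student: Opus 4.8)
The plan is to compute the extension class directly from an explicit set-theoretic section, and to match the resulting $2$-cocycle with a standard representative for the cup product $\bar s_1 \bar s_2$.

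First I would fix the section $\phi \colon C_2 \times C_2 \to D_4$ sending a pair $(i,j) \in \f_2^2$ to the matrix with near-diagonal entries $s_1 = i$, $s_2 = j$ and top entry $t = 0$; this is a set-theoretic splitting of $s$. The extension class is then represented by the inhomogeneous $2$-cocycle $c\big((i,j),(k,l)\big) = \phi(i,j)\,\phi(k,l)\,\phi(i+k,j+l)^{-1}$, valued in $\ker s \cong \f_2$, the isomorphism being $g \mapsto t(g) = g_{1,3}$ as in the identification of the center (so that the generator $[\sigma_1,\sigma_2]$ corresponds to $1$). A one-line matrix multiplication shows that $\phi(i,j)\,\phi(k,l)$ has near-diagonal entries $s_1 = i+k$, $s_2 = j+l$ and top entry $t = i\,l$, so that upon multiplying by $\phi(i+k,j+l)^{-1}$ the near-diagonal entries cancel and one is left with
\[ c\big((i,j),(k,l)\big) = i \cdot l \in \f_2 . \]

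Next I would recall that, under $\h^1(C_2^2,\f_2) = \Hom(C_2^2,\f_2)$, the dual basis elements are given by $\bar s_1(i,j) = i$ and $\bar s_2(i,j) = j$, and that the cup product of two homomorphisms $\alpha,\beta$ is represented by the cocycle $(\alpha \cup \beta)\big((i,j),(k,l)\big) = \alpha(i,j)\cdot\beta(k,l)$. Thus $\bar s_1 \bar s_2$ is represented by $\big((i,j),(k,l)\big) \mapsto i \cdot l$, which is literally the cocycle $c$ computed above. Hence the two classes coincide in $\h^2(C_2^2,\f_2)$, proving the Lemma.

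The computation is entirely routine, so the only genuine point of care is the bookkeeping of conventions: the order in the cup product (immaterial, since $\bar s_1 \bar s_2 = \bar s_2 \bar s_1$ in characteristic $2$) and the identification $\ker s \cong \f_2$ via $t = g_{1,3}$. As a conceptual alternative avoiding cocycles altogether, one may note that $\h^2(C_2^2,\f_2)$ has basis $\bar s_1^2,\, \bar s_1\bar s_2,\, \bar s_2^2$; restricting the extension along each coordinate inclusion $C_2 \hookrightarrow C_2^2$ yields a split extension, because $\sigma_1$ and $\sigma_2$ are involutions lifting $\bar\sigma_1, \bar\sigma_2$, which forces the $\bar s_1^2$- and $\bar s_2^2$-coefficients to vanish; the remaining $\bar s_1\bar s_2$-coefficient is nonzero since $D_4$ is nonabelian and hence the extension does not split. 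This again identifies the class as $\bar s_1 \bar s_2$.
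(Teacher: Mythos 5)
Your proof is correct. The paper itself gives no argument for this Lemma---it is stated as ``very well-known'' and closed immediately with a $\square$---so there is nothing to compare against directly; your section-and-cocycle computation (choose the section with $t=0$, multiply, read off the factor set $c\big((i,j),(k,l)\big)=i\cdot l$, and recognize it as the standard representative of the cup product) is exactly the technique the paper itself deploys later to prove the analogous Lemma~\ref{lem-classes-extensions-of-N}. Both your main computation and your alternative argument (kill the $\bar s_1^2$ and $\bar s_2^2$ coefficients by restricting to the two copies of $C_2$, where the involutions $\sigma_1,\sigma_2$ give splittings, and then use nonabelianness of $D_4$ to rule out the zero class) are sound.
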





We introduce the two elementary abelian subgroups~$E_1, E_2$, where~$E_1$ is the kernel of~$s_2$ and~$E_2$ is the kernel of~$s_1$ (the switch is justified by the next Lemma). A very useful observation is that~$t$, when restricted to either of these, is a group homomorphism.

\begin{lem} \label{lem-transfer}
The corestriction
\[ \cores \colon \h^1(E_i, \f_2) \longrightarrow \h^1(D_4, \f_2)  \]
carries~$t|_{E_i}$ to~$s_i$, for~$i= 1, 2$. More generally if~$\Gamma $ is any profinite group with a continuous homomorphism~$\phi \colon \Gamma \to D_4$, and if~$H = \phi^{-1}(E_i)$ is assumed to have index~$2$ in~$\Gamma $, then the corestriction
\[\cores \colon \h^1(H, \f_2) \longrightarrow \h^1(\Gamma , \f_2)  \]
carries~$t \circ \phi$ to~$s_i \circ \phi$, for~$i= 1, 2$.
\end{lem}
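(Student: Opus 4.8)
The plan is to exploit that all four cochains in sight — $t|_{E_i}$, $s_i$, $t\circ\phi$ and $s_i\circ\phi$ — are genuine homomorphisms to $\f_2$ (the point being that, although $t$ is not a homomorphism on $D_4$, its restriction to $E_i$ is, as noted just before the statement). Since the coefficients $\f_2$ carry the trivial action, I would use the standard description of corestriction on $\h^1$ with trivial coefficients: for a finite-index subgroup $H \le \Gamma$ the map $\cores \colon \h^1(H,\f_2) = \Hom(H,\f_2) \to \Hom(\Gamma,\f_2) = \h^1(\Gamma,\f_2)$ is precomposition with the transfer (Verlagerung) homomorphism $\operatorname{Ver} \colon \Gamma \to H^{ab}$. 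This turns the whole statement into a transfer computation for an index-$2$ subgroup.

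For the first assertion I take $\Gamma = D_4$, $\phi = \mathrm{id}$ and $H = E_i$. Fixing $j \ne i$, so that $\sigma_j \notin E_i$ and $\{1,\sigma_j\}$ is a transversal of $E_i$ in $D_4$, the index-$2$ transfer formula reads $\operatorname{Ver}(g) = g\cdot(\sigma_j g \sigma_j)$ for $g \in E_i$ and $\operatorname{Ver}(g) = \sigma_j g^2 \sigma_j$ for $g \notin E_i$, both taken in the abelian group $E_i$. Applying $t|_{E_i}$ and using the explicit matrix law — $t$ is additive on $E_i$, and conjugation by $\sigma_j$ sends an element of $E_i$ to one whose $t$-coordinate is shifted by its $s_i$-coordinate — a short computation gives $t(\operatorname{Ver}(g)) = s_i(g)$ in either case, which is the claim. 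I would stress that restriction alone is insufficient here: since $\operatorname{res}\circ\cores$ is the norm $1 + \sigma_j^\ast$ on $\h^1(E_i,\f_2)$, one only learns that $\cores(t|_{E_i})$ restricts to $s_i|_{E_i}$, hence equals $s_i$ or $s_1 + s_2$; the genuinely necessary datum is the value on the nontrivial coset, which is exactly what the transfer records.

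For the general statement I would run the same computation with $\phi$ inserted. By hypothesis $H = \phi^{-1}(E_i)$ has index $2$ in $\Gamma$, so any $\gamma \notin H$ gives a transversal $\{1,\gamma\}$ and the index-$2$ transfer formula applies verbatim, producing $\operatorname{Ver}(g) = g\cdot(\gamma^{-1} g \gamma)$ or $\gamma^{-1} g^2 \gamma$. The cochain $t\circ\phi$ really is a homomorphism on $H$ (because $\phi|_H$ lands in $E_i$ and $t|_{E_i}$ is additive), so I may apply it term by term. Two structural facts then push the base-case arithmetic through $\phi$: first, $E_i$ is normal in $D_4$; second, being abelian and normal, its conjugation action depends only on the coset modulo $E_i$, so $\phi(\gamma)^{-1}x\,\phi(\gamma) = \sigma_j^{-1} x \sigma_j$ for every $x \in E_i$. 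Evaluating $t\circ\phi$ on $\operatorname{Ver}(g)$ therefore reduces to the $D_4$ computation already performed, now read off from $\phi(g)$, and yields $s_i(\phi(g)) = (s_i\circ\phi)(g)$. Equivalently, one may phrase this as naturality of corestriction along $\phi$, which is legitimate precisely because the equality of indices $[\Gamma:H] = 2 = [D_4:E_i]$ forces the map of coset spaces $\Gamma/H \to D_4/E_i$ to be a bijection.

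The main obstacle, and the only real subtlety, is that corestriction is \emph{not} functorial under arbitrary homomorphisms: the index-$2$ hypothesis on $H$ is exactly what is needed, both to guarantee a two-element transversal and to make the coset map a bijection so that $\phi$ commutes with $\cores$. Beyond this, one must keep careful track of the single non-homomorphic coordinate $t$, using that it becomes additive only after restriction to $E_i$; all remaining steps are routine manipulations with upper-triangular matrices over $\f_2$.
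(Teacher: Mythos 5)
Your proof is correct and follows essentially the same route as the paper: the paper also reduces everything to the explicit index-$2$ corestriction formula (cited from Tate, which is precisely the evaluation-along-the-transfer description you use) and then performs the same matrix computation in $D_4$, dispatching the case of a general $\phi$ with ``the other cases are treated similarly.'' Your additional care with the general case --- noting $\phi(\gamma)\notin E_i$, the normality of $E_i$, and the index-preserving naturality of $\cores$ --- is a legitimate filling-in of what the paper leaves implicit.
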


\begin{proof}
We recall some properties of the corestriction
\[ \cores \colon \h^i(N, M) \longrightarrow  \h^i(G, M)  \]
where~$N$ is a subgroup of finite index of the profinite group~$G$, and~$M$ is a~$G$-module. In fact, we only need to consider the case when~$M$ has a trivial action, $N$ is closed of index~$2$ (and thus is normal) in~$G$, and~$i=1$, so that
\[ \cores \colon \Hom(N, M) \longrightarrow \Hom(G, M) \, .    \]
Here if~$f \colon N \to M$, then the map~$\cores(f) \colon G \to M$ is characterized as follows. Pick~$\tau \in G \smallsetminus N$. Then (i) $\cores(f)(n) = f(n) + f(\tau^{-1} n \tau )$ for $n \in N$, and (ii) $\cores(f)(\tau ) = f( \tau ^2)$. This follows from the material in~\cite{tate}, \S2, for example.

Let us use this for~$N= E_1$ and~$G= D_4$. If
\[ n= \left(\begin{array}{rrr}
1 & a & b \\
0 & 1 & 0 \\
0 & 0 & 1
\end{array}\right)  \quad\textnormal{and}\quad \tau = \left(\begin{array}{rrr}
1 & c & d \\
0 & 1 & 1 \\
0 & 0 & 1
\end{array}\right) \, ,  \]
then
\[ \tau ^{-1} n \tau = \left(\begin{array}{rrr}
1 & a & a + b \\
0 & 1 & 0 \\
0 & 0 & 1
\end{array}\right)  \, .  \]
Moreover
\[ \tau^2 =  \left(\begin{array}{rrr}
1 & 0 & c \\
0 & 1 & 0 \\
0 & 0 & 1
\end{array}\right) \, .  \]
Thus~$t(n) + t(\tau ^{-1} n \tau ) = b+a+b=a = s_1(n)$, and $t(\tau^2) = c = s_1(\tau )$. Hence the first statement of the lemma for~$i=1$. The other cases are treated similarly.
\end{proof}

\subsection{$D_4$-extensions of fields}
We proceed to apply the above observations in a Galois-theoretic context, but a couple of comments are in order.
First, the group~$D_4$ has an automorphism exchanging~$\sigma_1$ and~$\sigma_2$, but the Proposition below is not ``symmetric'' in this way -- it involves the subgroup~$E_2$ and not~$E_1$, for example (so that one could get a new Proposition by exchanging the roles of various players).
Second, when asked for a basis for~$E_2$, the reader would probably offer~$[\sigma_1, \sigma_2], \sigma_2$; however, later considerations with~$\uf$ compel us to work with~$\sigma_2[\sigma_1, \sigma_2], \sigma_2$ instead (specifically, we want Lemma~\ref{lem-classes-extensions-of-N} to have the simple form given below).
This is reflected in the Proposition below, since the dual basis of~$\h^1(E_2, \f_2)$ is~$t, s_2+t$ (or, in more complete notation, $t|_{E_2}, s_2|_{E_2} + t|_{E_2}$).

\begin{prop} \label{prop-D4-iff-cup-prop-iff-norm}
Let $F$ be a field of characteristic $\neq 2$ and let $a,b \in F^\times$ be given.
Then the following are equivalent:
\begin{enumerate}
\item $(a, b)_F= 0$.
\item There exists a continuous homomorphism~$\phi \colon G_F \longrightarrow D_4$ such that~$s_1 \circ \phi = \chi_a$ and~$s_2 \circ \phi = \chi_b$.
\item There exist~$x, y, z \in F$ such that~$x^2 - a y^2 = b z^2$, with~$z \ne 0$.
\end{enumerate}
When the equivalent conditions hold and $B := x + y \sqrt{a}$, we will say that $\phi$ from (2) and $x,y,z$ from (3) are \emph{consistent}, provided that $\chi_{bB} = t \circ \phi$ and $\chi_B = (s_2 + t) \circ \phi$ as elements of $\H^1(F[\sqrt{a}], \f_2)$.
Then given any $\phi$ as in (2), we can choose $x,y,z$ as in (3) which are consistent with $\phi$.
Conversely, given any $x,y,z$ as in (3), we can choose $\phi$ as in (2) which is consistent with $x,y,z$.

\end{prop}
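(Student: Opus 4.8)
The plan is to establish the three equivalences first and then treat the consistency statement, which is where the real work lies. For $(1)\Leftrightarrow(3)$ I would invoke the classical identification of $(a,b)_F$ with the class of the quaternion algebra $(a,b)$: this class vanishes precisely when $b$ is a norm from $F[\sqrt a]$, i.e.\ when $b = N_{F[\sqrt a]/F}(x + y\sqrt a) = x^2 - a y^2$ is solvable, and homogenizing turns this into the solvability of $x^2 - ay^2 = bz^2$ with $z \neq 0$. For $(1)\Leftrightarrow(2)$ I would read the existence of $\phi$ as the solvability of the embedding problem obtained by lifting $(\chi_a,\chi_b)\colon G_F \to C_2\times C_2$ along $s\colon D_4 \to C_2\times C_2$; since the kernel is central, the obstruction is the pullback of the class of the extension, which by Lemma~\ref{lem-coho-class-D4} equals $\chi_a\cup\chi_b = (a,b)_F$, so a lift exists exactly when $(a,b)_F = 0$.

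For the consistency claim, the first observation is that the two required identities are equivalent to one another. Writing $H := \ker(\chi_a) = G_{F[\sqrt a]}$, which has index $2$ when $a$ is not a square (the square case I would dispatch separately within the abelian group $E_2$), one has $s_2\circ\phi|_H = \operatorname{res}(\chi_b)$ and $\chi_{bB} = \operatorname{res}(\chi_b) + \chi_B$ in $\H^1(F[\sqrt a],\f_2)$; since $t = s_2 + (s_2+t)$ on $E_2$, the condition $\chi_B = (s_2+t)\circ\phi$ and the condition $\chi_{bB} = t\circ\phi$ imply each other. So it suffices to arrange the single equality $\chi_B = (s_2+t)\circ\phi|_H$. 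The two facts I would lean on throughout are that corestriction on $\H^1(-,\f_2)$ is the norm on Kummer classes, and that $\operatorname{cores}\circ\operatorname{res}$ is multiplication by $2 = 0$.

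Given $\phi$ as in (2), I would set $\gamma := (s_2+t)\circ\phi|_H \in \H^1(F[\sqrt a],\f_2)$, choose by Kummer theory some $B \in F[\sqrt a]^\times$ with $\chi_B = \gamma$, and write $B = x + y\sqrt a$. Splitting $\gamma = \operatorname{res}(\chi_b) + t\circ\phi|_H$ and applying corestriction yields $\operatorname{cores}(\gamma) = 0 + \chi_b = \chi_b$ by Lemma~\ref{lem-transfer}; since corestriction is the norm, this says $N_{F[\sqrt a]/F}(B) \equiv b$ modulo squares, i.e.\ $x^2 - ay^2 = bz^2$ for some $z \in F^\times$ (necessarily $z \neq 0$, as $B$ is invertible). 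This produces $(x,y,z)$ satisfying (3) and consistent with $\phi$.

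For the converse, I would start from any lift $\phi_0$ as in (2) (which exists by $(3)\Rightarrow(1)\Rightarrow(2)$) and exploit that the set of lifts is a torsor under $\H^1(F,\f_2)$ acting through the centre of $D_4$: twisting $\phi_0$ by $\psi$ leaves $s_1,s_2$ unchanged and replaces $(s_2+t)\circ\phi_0|_H$ by $(s_2+t)\circ\phi_0|_H + \operatorname{res}(\psi)$. Thus the achievable values of $(s_2+t)\circ\phi|_H$ form the coset $(s_2+t)\circ\phi_0|_H + \operatorname{im}(\operatorname{res})$, and the task is to show $\chi_B$ lies in it. Both $\chi_B$ and $(s_2+t)\circ\phi_0|_H$ have corestriction $\chi_b$ (the former because $N_{F[\sqrt a]/F}(B) = bz^2$, the latter by Lemma~\ref{lem-transfer}), so their difference lies in $\ker(\operatorname{cores})$. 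The main obstacle, and the one genuinely nontrivial input, is the exactness
\[ F^\times/F^{\times2} \xrightarrow{\operatorname{res}} F[\sqrt a]^\times/F[\sqrt a]^{\times2} \xrightarrow{N} F^\times/F^{\times2}, \]
that is, $\ker(\operatorname{cores}) = \operatorname{im}(\operatorname{res})$, which I would deduce from Hilbert 90: if $N(\beta) = c^2$ with $c \in F^\times$, then $N(\beta/c) = 1$, so $\beta/c = \tau(\delta)/\delta$ for the nontrivial $\tau \in \Gal(F[\sqrt a]/F)$, whence $\beta = c\,N(\delta)\,\delta^{-2} \in F^\times\cdot F[\sqrt a]^{\times2}$. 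Granting this, $\chi_B - (s_2+t)\circ\phi_0|_H = \operatorname{res}(\psi)$ for some $\psi \in \H^1(F,\f_2)$, and $\phi := \phi_0\cdot\psi$ is the desired consistent defining system.
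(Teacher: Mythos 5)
Your proposal is correct, and on the part of the statement that carries the real content --- the consistency claims --- it takes a genuinely different route from the paper. The equivalences $(1)\Leftrightarrow(2)\Leftrightarrow(3)$ and the direction ``given $\phi$, produce consistent $x,y,z$'' essentially coincide with the paper's argument (lifting obstruction via Lemma~\ref{lem-coho-class-D4}, then Lemma~\ref{lem-transfer} together with the identification of corestriction with the norm; the paper picks $B'$ with $\chi_{B'}=t\circ\phi$ and sets $B=bB'$, you pick $B$ with $\chi_B=(s_2+t)\circ\phi$ directly, which amounts to the same thing). The divergence is in the converse. The paper builds a consistent $\phi$ from $x,y,z$ by hand: it shows $K=F[\sqrt a,\sqrt b,\sqrt B]$ is Galois over $F$ and identifies $\gal(K/F)$ with $D_4$, $C_4$ or $C_2^2$ according as $a,b$ are independent modulo squares, equal modulo squares, or $b$ is a square, adjusting generators by $[\sigma_1,\sigma_2]$ to force the right signs --- three sub-cases. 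You instead take an arbitrary lift $\phi_0$ (whose existence is already known), observe that the set of lifts is a torsor under $\H^1(F,\f_2)$ acting through the centre of $D_4$, check that $\chi_B$ and $(s_2+t)\circ\phi_0|_H$ have the same corestriction $\chi_b$, and close the gap using the exactness $\ker(\cores)=\operatorname{im}(\operatorname{res})$ on square classes, proved by Hilbert~90. This eliminates the paper's case analysis and isolates the single nontrivial input (Hilbert~90 --- which the paper also invokes, but only later and for a different purpose, in Proposition~\ref{prop-modifiers}). What the paper's heavier construction buys in exchange is explicitness: it exhibits the generators of $\gal(K/F)$ acting on concrete radicals, and that explicit picture is reused in \S\ref{sec-explicit}.

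One caveat: your parenthetical dismissal of the case $a\in F^{\times 2}$ is thinner than it needs to be, since in that case $H=G_F$, Lemma~\ref{lem-transfer} does not apply, and your corestriction bookkeeping degenerates; both directions of consistency then require their own (easy) argument inside $E_2$. The nontrivial half is that any prescribed class $\chi_f$ can be realized as $\chi_B$ with $B=x+y\sqrt a$ and $x^2-ay^2=bz^2$; this follows from the explicit solution $x=\tfrac12(f+b/f)$, $y=\tfrac{1}{2\sqrt a}(f-b/f)$, $z=1$, which is in substance the computation the paper performs with its $x_0,y_0,B_0$. So this is a gap of exposition rather than of substance, but in a complete write-up it should be spelled out.
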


Again, this is essentially known, but we need the precise version given here. Note that it is necessary to deal with the cases when either~$a$ or~$b$ is a square, and that the proof below gives more concrete information in some situations. Also note that, as promised, the elements~$t, s_2+t$, related to~$E_2$, make an uncanny appearance.

\begin{proof}
  We can combine~$\chi_a$ and~$\chi_b$ into a homomorphism~$G_F \to C_2 \times C_2$. The obstruction to lifting it to~$D_4$ is the cohomology class of the extension, so that Lemma~\ref{lem-coho-class-D4} gives immediately the equivalence of (1) and (2).

We first conduct the rest of the proof under the following

\medskip

{\em Assumption.} Assume for the moment that~$a$ is not a square in~$F$.

\medskip

Suppose (2) holds. Note that~$\phi^{-1}(E_2) = G_{F[\sqrt a]}$. Let~$B' = x' + y' \sqrt a \in F[\sqrt a]$ be such that $\chi_{B'} = t \circ \phi |_{G_{F[\sqrt a]}}$. Since~$a$ is not a square in~$F$, the subgroup~$G_{F[\sqrt a]}$ has index~$2$ in~$G_F$. Lemma~\ref{lem-transfer} shows then that~$\cores(\chi_{B'}) = s_2 \circ \phi = \chi_b$. Now recall that under the identifications of~$\h^1(F, \f_2)$ with~$\li F$ and of~$\h^1(F[\sqrt a], \f_2)$ with~$\li{F[\sqrt a]}$, the corestriction becomes the usual norm~$N_{F[\sqrt a]/F}$. It follows that
\[ N_{F[\sqrt a]/F}(B') = (x')^2 - a (y')^2 = b ~\textnormal{mod squares} \, .   \]
This gives (3), clearly, but we need to modify~$B'$ to get the consistency statement. And indeed, we put~$B = bB'$, so that~$B' = bB$ modulo squares, and the result follows.

Next, we must prove that (3) implies (2), or equivalently (1). We may as well suppose that~$a$ and~$b$ are both not squares, for (1) holds trivially otherwise. The assumption is that~$b$ is a norm from~$F[\sqrt a]$, or in more cohomological terms, that~$\chi_b$ is the corestriction of an element from the subgroup~$G_{F[\sqrt a]}$. That the cup product~$\chi_a \chi_b = 0$ then follows from the Arason exact sequence \cite{arason}.

However, to prove the claimed consistency, a more explicit argument is needed. Assume~$b$ is not a square. The element~$B= x + y \sqrt a$, where~$x, y$ are as in (3), is fixed up to squares by~$\gal(F[\sqrt a, \sqrt b]/F)$, as is readily checked. It follows that~$K=F[\sqrt a, \sqrt b, \sqrt{B}]$ is Galois over~$F$ (by equivariant Kummer theory, if you will).

We distinguish two cases, and assume first that~$a$ and~$b$ are not equal modulo squares.
We now introduce elements~$\sigma_1, \sigma_2 \in \gal(K/F)$ which are dual to~$\sqrt a, \sqrt b$ in the obvious sense.
Direct computation shows that~$\sigma_1^2 = \sigma_2^2 = 1$, and that~$[\sigma_1, \sigma_2] (\sqrt B) = -\sqrt B$, so that~$[\sigma_1, \sigma_2] \ne 1$.
From this one draws readily that~$\gal(K/F) \cong D_4$ and (again!) that (2) holds. Also, one computes that~$\sigma_2(\sqrt B) = \pm \sqrt B$.
We want to ensure that~$\sigma_2(\sqrt B) = - \sqrt B$, and to achieve this we replace~$\sigma_2$ by~$\sigma_2 [\sigma_1, \sigma_2]$ if needed. Having done this, the elements~$\sqrt B$, $\sqrt{bB}$ are dual to~$\sigma_2$, $\sigma_2 [\sigma_1, \sigma_2]$, and one checks that the corresponding map~$\phi\colon G_F \to D_4$ has precisely the required consistency.

If~$a=b$ modulo squares, one sees that~$\gal(K/F)$ has order 4, so is abelian, and if~$\sigma $ is the non-trivial element of~$\gal(F[\sqrt a]/F)$ extended to~$\gal(K/F)$, another direct calculation shows that~$\sigma $ does not have order~$2$. So~$\gal(K/F) \cong C_4$ can be identified with the subgroup of~$D_4$ generated by~$\sigma_1 \sigma_2$, and (2) follows. Consistency is automatic.

Finally, if~$b$ is a square in~$F$, we use the same extension~$K= F[\sqrt a, \sqrt B]$ of~$F$, but compute that~$\gal(K/F) \cong C_2^2$. We identify this group with~$E_1$ appropriately, yielding a consistent~$\phi$.

\medskip

{\em The case when~$a$ is a square.}

\medskip

In this situation (1) holds trivially, and thus (2) also holds. As for (3), if~$a= u^2$ then put
\[ x_0= \frac{b+1} {2} \quad\textnormal{and}\quad  y_0 = \frac{1-b} {2u}\]
and compute that~$x_0^2 - a y_0^2 = b$.

Let us see how we can adjust~$\phi$ from~$x, y, z$. Put~$B = x + y \sqrt a \in F$, and consider the characters~$\chi_{bB}$ and~$\chi_b$, together defining a homomorphism~$G_F \to C_2 \times C_2$. Identifying Klein's group with~$E_2$ sitting in~$D_4$ appropriately, we obtain~$\phi \colon G_F \to D_4$ satisfying our requirements.

Our very last step is to see how one can adjust~$x, y, z$ from~$\phi$. First put~$B_0 = x_0 + y_0 \sqrt a \in F$ where~$x_0$ and~$y_0$ are as above, which is a non-zero element since~$B_0(x - y\sqrt a) = b \ne 0$.
For~$f \in F$, put~$x= \frac{f} {B_0} x_0$ and~$y = \frac{f} {B_0} y_0$, so that~$x^2 - a y^2 = bz^2$ for some~$z \in F^\times$, while~$B = x + y \sqrt a = f$, an arbitrary element of~$F$.
Of course~$\phi $ lands in the subgroup~$E_2$, so we only need to pick~$f$ so that~$\chi_f = (s_2 + t) \circ \phi = \chi_B$; we have then (2) and (3) simultaneously and consistently.
\end{proof}

\subsection{The group~$\uf$ and its subquotients} \label{subsec-U5-subgroups}

Let~$S$ (for ``square'') be the subgroup of matrices of the form
\[ \left(\begin{array}{rrrrr}
1 & 0 & 0 & y_1 & y_2 \\
0 & 1 & 0 & y_3 & y_4 \\
0 & 0 & 1 & 0 & 0 \\
0 & 0 & 0 & 1 & 0 \\
0 & 0 & 0 & 0 & 1
\end{array}\right) \, .
\]
Then~$S \cong C_2^4$, and our favourite $\F_2$-basis, denoted $e_1,e_2,e_3,e_4$, will be given by
\[ e_1 = e = \left(\begin{array}{rrrrr}
1 & 0 & 0 & 0 & 0 \\
0 & 1 & 0 & 1 & 0 \\
0 & 0 & 1 & 0 & 0 \\
0 & 0 & 0 & 1 & 0 \\
0 & 0 & 0 & 0 & 1
\end{array}\right)  , ~ e_2 = \sigma_1 e \sigma_1^{-1} = \left(\begin{array}{rrrrr}
1 & 0 & 0 & 1 & 0 \\
0 & 1 & 0 & 1 & 0 \\
0 & 0 & 1 & 0 & 0 \\
0 & 0 & 0 & 1 & 0 \\
0 & 0 & 0 & 0 & 1
\end{array}\right) \, ,   \]
\[ e_3 = \sigma_4 e \sigma_4^{-1} = \left(\begin{array}{rrrrr}
1 & 0 & 0 & 0 & 0 \\
0 & 1 & 0 & 1 & 1 \\
0 & 0 & 1 & 0 & 0 \\
0 & 0 & 0 & 1 & 0 \\
0 & 0 & 0 & 0 & 1
\end{array}\right)  , ~ e_4 = (\sigma_1 \sigma_4) e (\sigma_1 \sigma_4)^{-1} = \left(\begin{array}{rrrrr}
1 & 0 & 0 & 1 & 1 \\
0 & 1 & 0 & 1 & 1 \\
0 & 0 & 1 & 0 & 0 \\
0 & 0 & 0 & 1 & 0 \\
0 & 0 & 0 & 0 & 1
\end{array}\right) \, .  \]
The centralizer of~$S$ in~$\uf$, which we denote by~$\cent(S)$, is easily seen to be comprised of the elements~$g$ for which~$s_1(g) = s_4(g)= 0$, that is~$\cent(S) = \ker s_1 \cap \ker s_4$.
In particular~$\sigma_2$ and~$\sigma_3$ centralize~$S$, and from the formulae above we see that~$S$ is normal in~$\uf$. We shall write~$G= \uf / S$, which we identify with~$D_4 \times D_4$, as we visibly may. The image of~$\cent(S)$ in~$G$, that is~$\cent(S)/S$, will be denoted by~$N$, a normal subgroup of~$G$.

One has~$G/N = \uf/\cent(S) = \langle \sigma_1, \sigma_4 \rangle \cong C_2^2$ (we shall often write~$\sigma_i$ for the image of this element in various quotients, whenever no confusion can arise). The next observation is now clear, but it is crucial:

\begin{lem} \label{lem-S-is-free}
The action of~$G/N$ on~$S$, induced by conjugation, turns it into a free $\f_2[G/N]$-module of rank $1$. A specific isomorphism~$\f_2[G/N] \to S$ is given by $1 \mapsto e$, for example. \hfill $\square$
\end{lem}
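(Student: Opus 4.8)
The plan is to exhibit the map of the statement as an explicit isomorphism of $\f_2[G/N]$-modules, simply by checking that it carries the standard $\f_2$-basis of the group ring to the chosen basis $e_1,e_2,e_3,e_4$ of $S$. Everything reduces to matching up the orbit of $e$ under conjugation with that basis.

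First I would record that the conjugation action of $\uf$ on the normal subgroup $S$ genuinely factors through $G/N$. Indeed, by definition $\cent(S)$ centralizes $S$, so it acts trivially by conjugation; since $G/N = \uf/\cent(S)$, the action descends to an action of $G/N$ on $S$, and this is exactly the $\f_2[G/N]$-module structure under consideration. Consequently there is a unique $\f_2[G/N]$-linear map $\lambda \colon \f_2[G/N] \to S$ with $\lambda(1) = e$, namely the one in the statement.

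Next I would compute $\lambda$ on the standard $\f_2$-basis $\{1,\sigma_1,\sigma_4,\sigma_1\sigma_4\}$ of $\f_2[G/N]$; recall that $G/N = \langle \sigma_1,\sigma_4\rangle \cong C_2^2$, and that $\sigma_1,\sigma_4$ already commute in $\uf$ (their off-diagonal entries sit in positions $(1,2)$ and $(4,5)$), so conjugation by the lift $\sigma_1\sigma_4$ is unambiguous. By $\f_2[G/N]$-linearity one has $\lambda(g) = g \cdot e = g e g^{-1}$ for each $g \in G/N$, hence
\[ \lambda(1) = e = e_1, \quad \lambda(\sigma_1) = \sigma_1 e \sigma_1^{-1} = e_2, \quad \lambda(\sigma_4) = \sigma_4 e \sigma_4^{-1} = e_3, \quad \lambda(\sigma_1\sigma_4) = (\sigma_1\sigma_4) e (\sigma_1\sigma_4)^{-1} = e_4, \]
which is precisely how $e_1,\ldots,e_4$ were defined.

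Finally, since $e_1,\ldots,e_4$ form an $\f_2$-basis of $S$ by inspection of the displayed matrices, the map $\lambda$ sends a basis to a basis and is therefore an isomorphism of $\f_2$-vector spaces; being $\f_2[G/N]$-linear, it is an isomorphism of $\f_2[G/N]$-modules, so $S$ is free of rank $1$. There is no genuine obstacle here: the only two points deserving a word are that the action really factors through $G/N$ (immediate from $\cent(S)$ being the centralizer of $S$) and that the four conjugates are $\f_2$-linearly independent (immediate from the explicit matrices), after which the conclusion is a matter of dimension count.
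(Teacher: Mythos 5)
Your proof is correct and is essentially the argument the paper has in mind: the lemma is stated without proof precisely because $e_1,e_2,e_3,e_4$ were \emph{defined} as the conjugates of $e$ by $1,\sigma_1,\sigma_4,\sigma_1\sigma_4$, so the map $1\mapsto e$ visibly carries the standard basis of $\f_2[G/N]$ to a basis of $S$. You merely make explicit the two points the paper treats as immediate (the action factors through $G/N=\uf/\cent(S)$, and the four conjugates are linearly independent), which is exactly the right way to fill in the omitted details.
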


The group~$N$ itself also has a simple structure: one has~$N \cong C_2^4$, a basis being~
\[ \sigma_2[\sigma_1, \sigma_2], \ \sigma_2, \ \sigma_3, \ \sigma_3 [\sigma_4, \sigma_3]. \]
(A choice which respects the ambient ``symmetry'' already alluded to.)
The corresponding dual basis of $\h^1(N, \f_2)$ will be denoted~$x_1, x_2, x_3, x_4$.
To bridge the notation with that of the previous sections, we regard~$N$ as sitting in~$D_4 \times D_4$, which itself possesses six maps~$s_1, s_2, t_1, s_3, s_4, t_2$ to~$\f_2$, using names adapted from~\S\ref{subsec-notation-Un}.
With this notation, one has~$x_1= t_1$, $x_2= s_2+t_1$, $x_3= s_3+t_2$, and $x_4 = t_2$ (where restrictions to~$N$ are implicit).

Next we introduce some subgroups of~$S$, and use them to produce extensions of~$N$. These extensions turn out to control the entire situation, as will be explained. So we let
\[ S_1 := \langle e_2, e_3, e_4\rangle \, , ~S_2 := \langle e_1, e_3, e_4\rangle, ~S_3 := \langle e_1, e_2, e_4\rangle \, .   \]
(The group~$S_4$ which could be defined using the same logic will not play any role, as it happens. Also note that among these three, only~$S_1$ respects the ambient ``symmetry''.)

\begin{lem} \label{lem-classes-extensions-of-N}
Using the notation above, the following hold:
\begin{enumerate}
  \item The cohomology class of the extension
    \[ 0 \longrightarrow S/S_1 \cong \f_2 \longrightarrow \cent(S)/S_1 \longrightarrow N \longrightarrow 1  \]
    is~$x_2x_3$.
  \item The cohomology class of the extension
    \[ 0 \longrightarrow S/S_2 \cong \f_2 \longrightarrow \cent(S)/S_2 \longrightarrow N \longrightarrow 1  \]
    is~$x_1x_3$.
  \item The cohomology class of the extension
    \[ 0 \longrightarrow S/S_3 \cong \f_2 \longrightarrow \cent(S)/S_3 \longrightarrow N \longrightarrow 1  \]
    is~$x_2x_4$.
\end{enumerate}
\end{lem}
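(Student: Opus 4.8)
The plan is to recognize all three extensions as \emph{central} extensions of $N \cong \f_2^4$ by $\f_2$ and to read off their classes from the squaring and commutator maps of suitable lifts. Since $\cent(S)$ centralizes $S$ by definition, the image $S/S_m$ is central in $\cent(S)/S_m$ for each subgroup $S_m$; hence each of the three extensions is a central extension of the elementary abelian group $N$ by $\f_2 \cong S/S_m$. I would first record the standard dictionary for such extensions: if $n_1, \dots, n_4$ denotes the chosen basis of $N$ with dual basis $x_1, \dots, x_4$, and $\tilde n_i$ are arbitrary lifts to the middle group, then the class of the extension equals
\[ \sum_{i} (\tilde n_i^2)\, x_i^2 + \sum_{i<j} [\tilde n_i, \tilde n_j]\, x_i x_j, \]
where $\tilde n_i^2$ and $[\tilde n_i, \tilde n_j]$ are viewed in $\f_2$ through the identification $S/S_m \cong \f_2$. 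This is obtained by representing an arbitrary class $\sum_{k \le l} c_{kl}\, x_k x_l$ by the cup-product cocycle $(n,n') \mapsto \sum_{k\le l} c_{kl}\, x_k(n)\, x_l(n')$ and computing the square and the commutator of $(n_i,0)$ directly, which return $c_{ii}$ and $c_{ij}$ respectively.

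The computation then reduces to evaluating squares and commutators on the explicit matrix lifts $n_1 = \sigma_2[\sigma_1,\sigma_2]$, $n_2 = \sigma_2$, $n_3 = \sigma_3$, $n_4 = \sigma_3[\sigma_4,\sigma_3]$ already used to define the basis of $N$. These are matrices in $\cent(S)$, and because their images in $N$ commute, both their squares and their pairwise commutators land in the central subgroup $S$. I would compute these once and for all inside $\uf$, using $E_{ab}E_{bc} = E_{ac}$. All four lifts are involutions, so every square is trivial and no $x_i^2$ term survives; the commutators give a map $\Lambda^2 N \to S$ which I expect to evaluate to
\[ [n_2, n_3] = e_1, \quad [n_1, n_3] = e_2, \quad [n_2, n_4] = e_3, \quad [n_1, n_4] = e_4, \]
together with $[n_1, n_2] = [n_3, n_4] = 1$, where $e_1, \dots, e_4$ is the fixed $\f_2$-basis of $S$.

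With these values the three claims fall out at once. Since $e_1, \dots, e_4$ is a basis of $S$ and $S_m = \langle e_k : k \neq m\rangle$, reduction modulo $S_m$ sends $e_k$ to $0$ for $k \neq m$ and $e_m$ to the generator of $S/S_m \cong \f_2$. Thus, in the displayed formula for the class, every commutator term dies except the unique pair whose commutator equals $e_m$: the class modulo $S_1$ is $x_2 x_3$ (from $[n_2,n_3] = e_1$), modulo $S_2$ is $x_1 x_3$ (from $[n_1,n_3] = e_2$), and modulo $S_3$ is $x_2 x_4$ (from $[n_2,n_4] = e_3$), which is exactly the assertion.

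The only genuinely laborious step is the explicit commutator computation in $\uf$; everything else is formal. The point requiring care is the bookkeeping between the two bases — expressing each commutator correctly in the coordinates $(u_1, z, u_3, u_2)$ and identifying it with the right $e_k$ — since a slip there would merely permute the resulting products. I note in passing that the same table gives $[n_1, n_4] = e_4$, which is precisely why the analogously defined $S_4$ would produce the non-symmetric class $x_1 x_4$ and plays no role in the statement.
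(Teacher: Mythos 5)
Your proof is correct, and it takes a genuinely different route from the paper's. The paper proves part (1) by brute force at the cocycle level: it chooses an explicit set-theoretic section $\sec\colon N \to \cent(S)/S_1$, multiplies matrices to extract the $2$-cocycle $c(g,g') = x_2(g)x_3(g')$ measuring the failure of $\sec$ to be a homomorphism, and then declares parts (2) and (3) to be ``similar.'' You instead exploit the fact that each extension is \emph{central} (immediate, as you note, from $\cent(S)$ centralizing $S$) and invoke the standard dictionary for central extensions of $\f_2^n$ by $\f_2$: the class is $\sum_i (\tilde n_i^2)x_i^2 + \sum_{i<j}[\tilde n_i,\tilde n_j]x_ix_j$, with squares and commutators of lifts read in the kernel. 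Your justification of the dictionary (evaluate squares and commutators on the extension built from the cup-product cocycle, which returns $c_{ii}$ and $c_{ij}$) is sound, since squares and commutators of lifts are invariants of a central extension by $\f_2$, and the monomials $x_kx_l$, $k\le l$, form a basis of $\h^2(\f_2^4,\f_2)$. Your commutator table checks out: writing $\sigma_i = I + E_{i,i+1}$ one gets $n_1 = I + E_{13}+E_{23}$, $n_4 = I+E_{34}+E_{35}$, all four lifts are involutions, and indeed $[n_2,n_3]=e_1$, $[n_1,n_3]=e_2$, $[n_2,n_4]=e_3$, $[n_1,n_4]=e_4$, $[n_1,n_2]=[n_3,n_4]=1$; reduction modulo $S_m$ then kills all but one term and yields exactly $x_2x_3$, $x_1x_3$, $x_2x_4$. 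What your approach buys: one commutator computation handles all three cases (plus the hypothetical $S_4$, whose class $x_1x_4$ you correctly identify, matching the paper's remark that $S_4$ plays no role), and the absence of square terms is explained structurally (the lifts are involutions) rather than being invisible inside a cocycle formula. What the paper's approach buys: it is self-contained at the level of the bare definition of the extension class, requiring no classification statement for central extensions, at the cost of a heavier matrix manipulation and a ``the other cases are similar'' ending.
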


\begin{proof}
An element of~$\cent(S)$ has the form 
\[ g = \left(\begin{array}{rrrrr}
1 & 0 & t_{1}(g) & u_{1}(g) & z(g) \\
0 & 1 & s_{2}(g) & u_{3}(g) & u_{2}(g) \\
0 & 0 & 1 & s_{3}(g) & t_{2}(g) \\
0 & 0 & 0 & 1 & 0 \\
0 & 0 & 0 & 0 & 1
\end{array}\right) \, .  \]
Let us multiply two of these, say~$g$ and~$g'$, using the shorthand~$s_2= s_2(g)$ and~$s_2'=s_2(g')$, and so on: 
\[ gg' = \left(\begin{array}{rrrrr}
1 & 0 & t_{1} + t'_{1} & t_{1} s'_{3} + u_{1} + u'_{1} & t_{1} t'_{2} + z + z' \\
0 & 1 & s_{2} + s'_{2} & s_{2} s'_{3} + u_{3} + u'_{3} & s_{2} t'_{2} + u_{2} + u'_{2} \\
0 & 0 & 1 & s_{3} + s'_{3} & t_{2} + t'_{2} \\
0 & 0 & 0 & 1 & 0 \\
0 & 0 & 0 & 0 & 1
\end{array}\right) \, . \tag{*} \]
We shall use the set-theoretic section~$\sec \colon N \to \cent(S)$ given by 
\[ \sec(g) =  \left(\begin{array}{rrrrr}
1 & 0 & t_{1}(g) & 0 & 0 \\
0 & 1 & s_{2}(g) & 0 & 0 \\
0 & 0 & 1 & s_{3}(g) & t_{2}(g) \\
0 & 0 & 0 & 1 & 0 \\
0 & 0 & 0 & 0 & 1
\end{array}\right) \, . \]

Let us prove (1). Using the section~$N \to \cent(S)/S_1$ induced by~$\sec$, we end up with the bijection of sets $\Phi \colon S/S_1 \times N \to \cent(S)/S_1$ given by 
\[ \Phi(x, g) = x \sec(g) = \left(\begin{array}{rrrrr}
1 & 0 & 0 & 0 & 0 \\
0 & 1 & 0 & x & 0 \\
0 & 0 & 1 & 0 & 0 \\
0 & 0 & 0 & 1 & 0 \\
0 & 0 & 0 & 0 & 1
\end{array}\right) \times \sec(g) = \left(\begin{array}{rrrrr}
1 & 0 & t_{1}(g) & 0 & 0 \\
0 & 1 & s_{2}(g) & x & 0 \\
0 & 0 & 1 & s_{3}(g) & t_{2}(g) \\
0 & 0 & 0 & 1 & 0 \\
0 & 0 & 0 & 0 & 1
\end{array}\right) \, .   \]
Here we have used (*) to perform the calculation. A caveat : in these expressions, we have identified~$S/S_1$ with~$\f_2$ (this can be done uniquely!), so that~$x \in S/S_1$ can be seen as an entry ($0$ or $1$) of a matrix. A second caveat is that the matrix displayed is understood modulo~$S_1$ only.

From the theory of group extensions, we have $\Phi (x, g) \Phi (y, g') = \Phi (x + y + c(g, g'), gg')$, where the expression $c(g,g')$, is what we are after, that is,  it is a two-cocycle representing the cohomology class of the extension under scrutiny. So we compute, from (*), that
\[ \Phi (x, g) \Phi (y, g') = \left(\begin{array}{rrrrr}
1 & 0 & t_{1} + t'_{1} & t_{1} s'_{3} & t_{1} t'_{2} \\
0 & 1 & s_{2} + s'_{2} & s_{2} s'_{3} + x + y & s_{2} t'_{2} \\
0 & 0 & 1 & s_{3} + s'_{3} & t_{2} + t'_{2} \\
0 & 0 & 0 & 1 & 0 \\
0 & 0 & 0 & 0 & 1
\end{array}\right) \, . 
  \]
Another useful computational remark is that, in $S$ identified with the additive group of~$2\times 2$-matrices, we have 
\[ \left(\begin{array}{cc}
  a & b \\
  c & d 
\end{array}\right) = (a+b+c+d) e_1 + (a+b) e_2 + (b+d)e_3 + b e_4 \equiv \left(\begin{array}{cc}
  0 & 0 \\
  a+b+c+d & 0
\end{array}\right) ~\textnormal{mod}~ S_1\, .   \]
Thus the last matrix displayed, viewed in~$\cent(S)/S_1$, is also 
\[  \left(\begin{array}{rrrrr}
1 & 0 & t_{1} + t'_{1} & 0 & 0 \\
0 & 1 & s_{2} + s'_{2} &  x + y + s_{2} s'_{3} + t_{1} s'_{3} +  s_{2} t'_{2} + t_{1} t'_{2} & 0  \\
0 & 0 & 1 & s_{3} + s'_{3} & t_{2} + t'_{2} \\
0 & 0 & 0 & 1 & 0 \\
0 & 0 & 0 & 0 & 1
\end{array}\right) \, .  \]
We conclude that, as predicted, $c(g, g') = s_{2}(g) s_{3}(g') + t_{1}(g) s_{3}(g') +  s_{2}(g) t_2{(g')} + t_{1}(g) t_{2}(g') = x_2(g) x_3(g')$, and~$c$ is indeed the cup-product of~$x_2$ and~$x_3$.

The proofs of (2) and (3) are similar.
\end{proof}


\begin{rmk}
Note that in each case~$\cent(S)/S_i \cong C_2^2 \times D_4$.
\end{rmk}

\section{The fundamental cup-products} \label{sec-cup-products}

In this section, we prove Theorem~\ref{thm-main-four-cup-products-without-tildes}, as stated in the Introduction. We shall see that we are led naturally to another statement first, in which the following notation is used. When~$a \in F$, we put~$F_a = F[X]/(X^2  - a)$. When~$e + fX \in F_a$, we define its {\em norm} to be $\Norm_{F_a/F}(e + fX) = e^2 - a f^2$. When a square root~$\sqrt a$ has been chosen in some field containing~$F$, there is a homomorphism $F_a \to F[\sqrt a]$ mapping~$X$ to~$\sqrt a$. Of course when~$a$ is not a square in~$F$, this map is an isomorphism of extensions of~$F$, and the norm just introduced coincides with the usual norm map between fields. However, it is useful to work with~$F_a$ for those occasions when~$a$ is already a square in~$F$.

We start by the implication $(1) \implies (2)$ from Theorem~\ref{thm-main-four-cup-products-without-tildes}.

\subsection{Proving that the four cup-products vanish}

\begin{thm} \label{thm-U5-implies-rational-point}
Let~$F$ be a field of characteristic not~$2$, let $a,b,c,d \in F^\times$ be given and put $E := F[\sqrt{a},\sqrt{d}]$.
Suppose there exist a continuous homomorphism~$\bar \phi \colon G_F \to \bar\U_5(\f_2)$ such that~$\chi_a = s_1 \circ \bar\phi$, $\chi_b = s_2 \circ \bar\phi$, $\chi_c = s_3 \circ \bar\phi$, and $\chi_d = s_4 \circ \bar\phi$. (In other words, we assume that $\langle a, b, c, d \rangle$ is defined.)
Then there exist~$\tilde B \in F_a$ and~$\tilde C \in F_d$ such that the following hold, where $B$ denotes the image of $\tilde B$ under $F_a \rightarrow F[\sqrt{a}]$ and $C$ denotes the image of $\tilde C$ under $F_d \rightarrow F[\sqrt{d}]$.
\begin{enumerate}
  \item One has $\Norm_{F_a/F}(\tilde B) = b$ modulo squares and $\Norm_{F_d/F}(\tilde C) = c$ modulo squares.
  \item One has $(B, c)_{F[\sqrt a]} = 0$, $(b, C)_{F[\sqrt d]}=0$, and $(b,c)_F= 0$.
  \item There is a class~$u \in \h^2(F, \f_2)$ whose image under the canonical restriction map $\h^2(F, \f_2)  \longrightarrow  \h^2(E, \f_2)$ is~$(B,C)_E$.
  \item Suppose that~$\bar \phi $ can be lifted to~$\phi \colon G_F \to \uf$. (In other words, assume that $\langle a, b, c, d \rangle$ vanishes.) Then one has~$(B,C)_E= 0$.
\end{enumerate}

\end{thm}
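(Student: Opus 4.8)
The plan is to analyse the single homomorphism $\bar\phi$ directly through the explicit matrix entries of $\uf$, rather than reconstructing defining systems. First I would compose $\bar\phi$ with the projection onto $\uf/S = D_4 \times D_4$ to obtain $\psi = (\phi_1,\phi_2)$, with $s_1\phi_1 = \chi_a$, $s_2\phi_1 = \chi_b$ and $s_4\phi_2 = \chi_d$, $s_3\phi_2 = \chi_c$. Applying Proposition~\ref{prop-D4-iff-cup-prop-iff-norm} to $\phi_1$ (with the pair $a,b$) and to $\phi_2$ (with the pair $d,c$, using the symmetry $\sigma_i \leftrightarrow \sigma_{5-i}$) produces $\tilde B \in F_a$ and $\tilde C \in F_d$ consistent with $\phi_1,\phi_2$. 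Consistency gives (1) together with the key identifications $\chi_B = (s_2+t_1)\circ\bar\phi$, $\chi_{bB} = t_1\circ\bar\phi$ over $F[\sqrt a]$, and $\chi_C = (s_3+t_2)\circ\bar\phi$, $\chi_{cC} = t_2\circ\bar\phi$ over $F[\sqrt d]$ (all of $s_2,s_3,t_1,t_2$ factor through $D_4\times D_4$, so they may be computed on $\bar\phi$ itself).

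The heart of the argument is that each of the four cup-products in question is the pullback along $\bar\phi$ of an explicit $2$-cochain on $\bar\U_5(\f_2)$ which, over the appropriate subgroup, is the coboundary of one of the entries $u_1 = (1,4)$, $u_2 = (2,5)$, $u_3 = (2,4)$ (none of which is the central entry $z$, so all descend to $\bar\U_5(\f_2)$). Reading off the multiplication law in $\uf$ gives $d u_3 = s_2 s_3$ on all of $\uf$, while $d u_1 = s_1 u_3 + t_1 s_3$ and $d u_2 = s_2 t_2 + u_3 s_4$. Hence $\chi_b \cup \chi_c = \bar\phi^*(du_3)$ is a coboundary, so $(b,c)_F = 0$. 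Restricting $\bar\phi$ to $G_{F[\sqrt a]} = \bar\phi^{-1}(\ker s_1)$ kills the $s_1 u_3$ term, so $\chi_B \cup \chi_c = \bar\phi^*((s_2+t_1)s_3) = \bar\phi^*(d(u_1+u_3))$ is a coboundary and $(B,c)_{F[\sqrt a]} = 0$; symmetrically, restricting to $G_{F[\sqrt d]} = \bar\phi^{-1}(\ker s_4)$ gives $(b,C)_{F[\sqrt d]} = 0$. This proves (2).

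For (3) and (4), I would restrict $\bar\phi$ to $G_E = \bar\phi^{-1}(\cent(S)/\Zcal)$, where $s_1 = s_4 = 0$, so that $(B,C)_E = \bar\phi^*((s_2+t_1)(s_3+t_2))$. Over $\cent(S)$ the three terms $s_2 s_3 + s_2 t_2 + t_1 s_3$ coincide with $d(u_1+u_2+u_3)$, leaving $(B,C)_E = \bar\phi^*(t_1 \cup t_2)$ in $\h^2(E)$. On the other hand, computing the extension $\uf \to \bar\U_5(\f_2)$ with the set-section $z = 0$ yields the cocycle $(g,g') \mapsto s_1 u_2 + t_1 t_2 + u_1 s_4$, which over $\cent(S)/\Zcal$ is exactly $t_1(g)t_2(g')$; hence $\xi_5|_{\cent(S)/\Zcal} = [t_1 \cup t_2]$. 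Writing $u := \bar\phi^*\xi_5 \in \h^2(F)$ (the value of $\langle a,b,c,d\rangle$ at $\bar\phi$), I then get $u|_E = \bar\phi^*(t_1 \cup t_2) = (B,C)_E$, which is (3). Finally, if $\bar\phi$ lifts to $\phi\colon G_F \to \uf$ then $\bar\phi^*\xi_5 = 0$, so $(B,C)_E = u|_E = 0$, giving (4).

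The main obstacle is bookkeeping rather than conceptual: one must pin down the consistency statements of Proposition~\ref{prop-D4-iff-cup-prop-iff-norm} (the precise appearance of $s_2+t_1$ and $t_1$, and their mirror images for the second factor) and verify the coboundary identities from the $\uf$ multiplication, taking care that the error terms $s_1 u_3$ and $u_3 s_4$ only disappear after restricting to $\ker s_1$, $\ker s_4$, or $\cent(S)$. The one genuinely structural input is the identification of $\xi_5|_{\cent(S)/\Zcal}$ with the cup-product cocycle $t_1 \cup t_2$; this is what links the surviving term over $E$ to the Massey-product value and thereby couples (3) and (4) to the vanishing of $\langle a,b,c,d\rangle$, and it is also the content encoded abstractly in Lemma~\ref{lem-classes-extensions-of-N}.
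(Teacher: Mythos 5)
Your proposal is correct, and its skeleton matches the paper's: both begin by pushing $\bar\phi$ down to $\uf/S = D_4\times D_4$ and invoking the consistency clause of Proposition~\ref{prop-D4-iff-cup-prop-iff-norm} on each factor to produce $\tilde B,\tilde C$ with $\chi_B = (s_2+t_1)\circ\bar\phi$, $\chi_{bB} = t_1\circ\bar\phi$, $\chi_C = (s_3+t_2)\circ\bar\phi$, $\chi_{cC} = t_2\circ\bar\phi$, and both use the same explicit cocycle $s_1(g)u_2(h)+t_1(g)t_2(h)+u_1(g)s_4(h)$ for the class of the extension $0\to\f_2\to\uf\to\bar\U_5(\f_2)\to 1$. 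Where you diverge is in the mechanics. For part (2), the paper manufactures homomorphisms into $D_4$ (the quotient discarding the top row and rightmost column for $(b,c)_F$, and the map $\pi$ on $\ker(s_1)$ built from the entries $t_1,u_1,s_3$ for $(B,c)_{F[\sqrt a]}$) and then cites Proposition~\ref{prop-D4-iff-cup-prop-iff-norm} again; you instead verify the coboundary identities $du_3 = s_2\cup s_3$, $du_1 = s_1\cup u_3 + t_1\cup s_3$, $du_2 = s_2\cup t_2 + u_3\cup s_4$ and restrict to $\ker(s_1)$, $\ker(s_4)$ --- these are exactly the identities that make the paper's $\pi$ a homomorphism, so the content is the same, but your version is self-contained at the cochain level. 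For parts (3)--(4) the difference is more substantive: your identity $(s_2+t_1)\cup(s_3+t_2) = t_1\cup t_2 + d(u_1+u_2+u_3)$ on $\cent(S)$ gives $(B,C)_E = \bar\phi^*(\xi_5)|_E$ directly, without routing through $(bB,cC)_E$ and the already-proved part (2) as the paper does; and you then get (4) for free, since the existence of a lift $\phi$ forces $\bar\phi^*(\xi_5)=0$. The paper instead proves (4) by factoring $G_E\to N$ through the quotients $\cent(S)/S_i$ and citing Lemma~\ref{lem-classes-extensions-of-N}. Your shortcut is genuinely simpler for this theorem; what the paper's heavier route buys is the additional vanishing $(bB,C)_E = (B,cC)_E = 0$ and, more importantly, a first run of the Shapiro-type machinery (the subgroups $S_i$ and Lemma~\ref{lem-classes-extensions-of-N}) that is indispensable for the converse, Theorem~\ref{thm-rational-point-implies-U5}.
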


Note that the mere existence of~$\tilde B$ and~$\tilde C$ implies also that~$(a,b)_F= 0$ and~$(c,d)_F = 0$, by Proposition~\ref{prop-D4-iff-cup-prop-iff-norm}.

\begin{proof}

  First we consider the composition
\[ \bbar \phi \colon G_F \longrightarrow \bar{U}_5(\f_2) \longrightarrow \bar{\U}_5(\f_2)/S = G = D_4 \times D_4 \, .   \]
Projecting further onto the left factor, we make a first use of Proposition~\ref{prop-D4-iff-cup-prop-iff-norm}.
We draw the existence of~$x, y, z \in F$ satisfying $x^2 - ay^2 = bz^2$, that is~$\Norm_{F_a/F}(\tilde B) = bz^2$, with~$\tilde B = x + y X \in F_a$.
If~$B= x + y \sqrt a$ is the corresponding element, then the Proposition says that we can arrange to have~$\chi_{bB} = t_1 \circ \bar \phi = x_1 \circ \bar \phi$, $\chi_B =(s_2 + t_1) \circ \bar \phi  = x_2 \circ \bar \phi $. (We fully use the notation from~\S\ref{subsec-U5-subgroups}.)

Using the right factor, we find elements~$\tilde C$ and~$C$ similarly, such that~$\chi_C = x_3 \circ \bar \phi$, $\chi_{cC} = x_4 \circ \bar\phi$.
This also proves the first assertion.

We prove that the cup-products vanish as announced in the second assertion, starting with~$(b,c)_F= 0$ (which of course does not depend on the choices for~$B$ and~$C$). For this, consider the map
\[ \uf \longrightarrow \U_3(\f_2) = D_4  \]
which discards the top row and the rightmost column of an element of~$\uf$; this factors through~$\bar U_5(\f_2)$.
Postcomposing~$\bar \phi$ with this, we draw from Proposition~\ref{prop-D4-iff-cup-prop-iff-norm} that~$(b, c)_F= 0$, as required.

Next we turn to the proof of~$(B, c)_{F[\sqrt a]} = 0$, the cup-product~$(b,C)_{F[\sqrt d]}$ being treated in a similar way.
Define a group homomorphism~$\pi\colon \ker(s_1) \subset \bar\U_5(\f_2) \to \U_3(\f_2) = D_4$ by
\[ g \mapsto \left(\begin{array}{ccc}
  1 & t_1(g) & u_1(g) \\
  0 & 1 & s_3(g) \\
  0 & 0 & 1
\end{array}\right) \, .   \]
One checks that~$\pi$ is well-defined.
Note that~$\pi \circ \bar\phi : G_{F[\sqrt{a}]} \rightarrow D_4$ is a lift for~$(t_1 \circ \bar\phi, s_3\circ \bar\phi) \colon G_{F[\sqrt a]} \to \f_2 \times \f_2$ (using that~$G_{F[\sqrt a]} = \bar\phi^{-1}(\ker(s_1))$).
Therefore, we see from Proposition~\ref{prop-D4-iff-cup-prop-iff-norm} that the cup product of~$t_1 \circ \bar\phi$ and~$s_3 \circ \bar\phi$ is zero.
This means, in alternative notation, that~$(Bb, c)_{F[\sqrt a]} = 0$, so~$(B,c)_{F[\sqrt a]}= 0$.

We now turn to the third assertion.
Let~$\alpha $ be the cohomology class of the extension
\[ 0 \longrightarrow \f_2 \longrightarrow \uf \longrightarrow \bar\U_5(\f_2) \longrightarrow 1 \, .   \]
Let us write down an explicit two-cocycle~$\gamma $ representing~$\alpha $.
First, recall the functions $s_1, t_1, \ldots \colon \uf \to \f_2$ introduced in~\S\ref{subsec-notation-Un}.
Multiplying two matrices~$g, h \in \uf$, the top-right coefficient must be~$z(g) + z(h) + \gamma (g, h)$, and we deduce
\[ \gamma (g, h) = s_1(g)u_2(h) + t_1(g)t_2(h) + u_1(g)s_4(h) \, .   \]
To obtain a two-cocycle representing the pull-back~$\bar\phi^*(\alpha ) \in \h^2(F, \f_2)$, we only need compose with~$\bar\phi$. Restricting to the subgroup~$G_E$ where~$s_1 \circ \bar\phi$ and~$s_4 \circ \bar\phi$ both vanish, we obtain that~$\bar\phi^*(\alpha )_E$ is represented by the two-cocyle
\[ \sigma, \tau  \mapsto (t_1\circ \bar\phi(\sigma )) \, (t_2 \circ \bar\phi(\tau )) \, .   \]
It follows that~$\bar\phi^*(\alpha )_E= t_1 \circ \bar\phi \cupp t_2 \circ \bar\phi$, the cup-product of the classes~$t_i \circ \bar\phi \in \h^1(E, \f_2)$. Or in other words $\bar\phi^*(\alpha )_E = (bB, cC)_E$.

Given that~$(B, c)_{F[\sqrt a]} = 0$, $(b, C)_{F[\sqrt d]} = 0$, and~$(b, c)_F = 0$, it follows that~$u= \phi^*(\alpha ) \in \h^2(F, \f_2)$ satisfies~$u_E = (B, C)_E$.

Finally, suppose that~$\phi $ exists as in the fourth assertion.
We note that~$G_E = \phi^{-1}(\cent(S))$ (again the notation $\cent(S)$ for the centralizer of~$S$ is from~\S\ref{subsec-U5-subgroups}, and we had noted~$\cent(S) = \ker(s_1) \cap \ker(s_4)$).
The composition
\[ f  \colon G_E \longrightarrow \cent(S) \longrightarrow N  \]
factors via~$\cent(S)/S_i$ (for~$i=1, 2, 3$), so from the Lemma~\ref{lem-classes-extensions-of-N},  we must have~$f^*(x_2x_3)= 0$, $f^*(x_1x_3)= 0$ and~$f^*(x_2x_4) = 0$.
But these translate as~$(B, C)_E = 0$, which we were after, and $(bB, C)_E = 0$, ~$(B, cC)_E= 0$, consistently with the above.
\end{proof}

\subsection{Shapiro's lemma and the converse}

Let~$G$ be a finite group, let~$N$ be a subgroup, and let~$k$ be a field. For any~$kN$-module~$A$, we let~$\co^G_N (A)$ denote~$\Hom_N(kG, A)$, which is a (left) $kG$-module with action~$(\sigma \cdot f)(x) = f(x \sigma )$. (We are thinking of~$G$ and~$N$ as being the groups bearing those names in the discussion above, with~$k = \f_2$, and~$A$ having trivial action.) The well-known Shapiro's lemma states the existence of an isomorphism
\[ sh \colon \h^2(G, \co_N^G(A)) \longrightarrow \h^2(N, A) \, .  \]
More precisely, the map is obtained using~$\ev \colon \co_N^G(A) \to A$ which evaluates at~$1 \in G$, followed by  restriction (in cohomology) to~$N$. Note, if the class~$\alpha \in \h^2(G, \co_N^G(A))$ describes the extension
\[ 0 \longrightarrow \co_N^G(A) \longrightarrow \Gamma \stackrel{p}{\longrightarrow} G \longrightarrow 1 \, ,  \]
then~$sh(\alpha )$ corresponds to
\[ 0 \longrightarrow A \longrightarrow \frac{p^{-1}(N)} {\ker (\ev)} \longrightarrow N \longrightarrow 1 \, ,   \]
as is easily verified.

In order to recognize that a given $G$-module, say~$S$, is isomorphic to~$\co_N^G(A)$ for some~$A$, let us merely consider the case where~$A=k^r$ with trivial~$N$-action, and assume that~$N$ is normal in~$G$ to boot. Then~$\co_N^G(A) = \left( k[G/N]^*\right)^r$. The dual module~$\left(k[G/N]\right)^*$ is free of rank one, that is, it is isomorphic to~$k[G/N]$. (Consider the map taking~$1 \in G$ to $\delta_1$, the Dirac delta function at~$1$. Thus, $\delta_1(N) = 1$ and $\delta_1(g \cdot N) = 0$ if $g \notin N$.) We conclude that~$S$ is isomorphic to~$\co_N^G(k^r)$ if and only if~$N$ acts trivially and we can find a basis for~$S$ as a free~$k[G/N]$ of rank~$r$. If this basis is~$\varepsilon_1, \ldots, \varepsilon_r$, then~$\ker(ev)$ is the~$k$-vector space spanned by~$g \varepsilon_i$, for~$g \in G$, $g \ne 1$, $i= 1, \ldots, r$.

For example, let~$G, N, S$ recover their concrete meanings as in \S\ref{subsec-U5-subgroups} (all the accompanying notation will be used, too). Then Lemma~\ref{lem-S-is-free} asserts that~$S \cong \co_N^G(\f_2)$, in such a way that~$\ker(ev)$ is identified with~$S_1$. As a result, the cohomology class of
\[ 0 \longrightarrow S \longrightarrow \uf \longrightarrow G \longrightarrow 1 \tag{*} \]
corresponds via~$sh$ to the cohomology class of the first extension treated in Lemma~\ref{lem-classes-extensions-of-N}, that is, $x_2x_3$.

But~$S$ can be regarded in another way.
Consider the subgroup~$G' \subset G$ of elements mapping into~$\langle \sigma_1 \sigma_4\rangle$ under~$G \to G/N$ (equivalently, $g \in G'$ if~$s_1(g) = s_4(g)$), and view~$S$ as a~$G'$-module.
Lemma~\ref{lem-S-is-free} shows that~$S$ is a free~$\f_2[\langle \sigma_1 \sigma_4\rangle] = \f_2[G'/N]$-module, with basis~$e_1, e_2$ (or alternatively~$e_1, e_3$).
Thus we also have~$S \cong \co_N^{G'}(\f_2 \oplus \f_2)$, and~$\ker(ev)$ is spanned by~$e_3, e_4$ (or~$e_2, e_4$ in the alternative).
Now the cohomology class of
\[ 0 \longrightarrow S \longrightarrow \uf' \longrightarrow G' \longrightarrow 1 \, ,   \]
where~$\uf'$ is the preimage of~$G'$, is taken by Shapiro to that of the extension
\[ 0 \longrightarrow \f_2 e_1 \oplus \f_2 e_2 \longrightarrow \frac{\cent(S)} {\langle e_3, e_4 \rangle} \longrightarrow N \longrightarrow 1 \, .  \]
This extension is described by two classes in~$\h^2(N, \f_2)$, corresponding to the exact sequences obtained by factoring out~$e_1$ and~$e_2$ respectively. From Lemma~\ref{lem-classes-extensions-of-N}, these are~$x_1x_3$ and~$x_2x_3$ respectively. With the alternative choice of basis for~$S$, this discussion ends with~$x_2x_4$ and~$x_2x_3$.

There is a well-known version of Shapiro's lemma for profinite groups (cf. \cite[Ch. 1 \S 6]{MR2392026}), which can be deduced from the version mentioned above using a straightforward limit argument.
We record this version below in the context of Galois cohomology, since we will use it later on.

\begin{lem}
Let~$E/F$ be a finite Galois extension, let~$A$ be a trivial, discrete~$G_E$-module, and consider~$\co_{G_E}^{G_F}(A)$, a discrete~$G_F$-module. Then Shapiro's map
\[ \h^2(F, \co_{G_E}^{G_F}(A)) \longrightarrow \h^2(E, A) \, ,  \]
defined as above, is an isomorphism. \hfill \qed
\end{lem}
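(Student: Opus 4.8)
The plan is to deduce the profinite statement from the finite version of Shapiro's lemma recalled above, by writing both sides as direct limits over finite quotients and passing to the limit. First I would use that for any discrete $G_F$-module $M$ one has a natural identification
\[ \h^2(G_F, M) = \varinjlim_U \h^2(G_F/U, M^U), \]
the limit running over the open normal subgroups $U$ of $G_F$ with transition maps the inflations. Since $E/F$ is finite Galois, $G_E$ is open and normal in $G_F$ with $G_F/G_E = \Gal(E/F)$ finite; hence the $U$ that are contained in $G_E$ form a cofinal subsystem, and I would restrict to these.

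The computational heart is the analysis of the $U$-invariants of the coinduced module. For $U$ open normal in $G_F$ and contained in $G_E$, every $f \in \co_{G_E}^{G_F}(A)$ is automatically $U$-fixed: because $A$ has trivial action, $f$ is constant on the right cosets $G_E \backslash G_F$, and for $\sigma \in U$ one has $x\sigma x^{-1} \in U \subseteq G_E$, whence $f(x\sigma) = f(x)$. Thus $(\co_{G_E}^{G_F}(A))^U = \co_{G_E}^{G_F}(A)$, and writing $\bar G = G_F/U$, $\bar N = G_E/U$ this module is exactly the finite coinduced module $\co_{\bar N}^{\bar G}(A)$, since $\bar N \backslash \bar G = G_E \backslash G_F$. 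Applying finite Shapiro for each $U$ gives $\h^2(\bar G, \co_{\bar N}^{\bar G}(A)) \cong \h^2(\bar N, A)$. Finally, $A^U = A$ by triviality of the action, and since the normal core in $G_F$ of any open normal subgroup of $G_E$ is again open normal in $G_F$ and contained in $G_E$, the quotients $\bar N = G_E/U$ exhaust a cofinal system of finite quotients of $G_E$; therefore $\varinjlim_U \h^2(\bar N, A) = \h^2(G_E, A) = \h^2(E, A)$. Assembling these identifications produces the isomorphism.

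The one point requiring genuine care -- and which I expect to be the main obstacle -- is the naturality needed to guarantee that the colimit of the finite Shapiro isomorphisms is \emph{precisely} the map of the statement (evaluation $\ev$ at $1$ followed by restriction to $G_E$), and not merely some abstract isomorphism. Concretely, for $U' \subseteq U$ one must verify that the two finite Shapiro maps are intertwined by the inflation maps on both sides, which reduces to checking that $\ev$ and the restriction homomorphism are each compatible with inflation -- a routine but necessary diagram chase. Once these squares are seen to commute, passing to the colimit yields the profinite Shapiro map and shows it is an isomorphism; alternatively, this is exactly the content of the cited reference and may simply be invoked.
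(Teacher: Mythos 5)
Your proposal is correct, and it is essentially the paper's own argument: the paper gives no details, merely citing \cite[Ch.\ 1 \S 6]{MR2392026} and remarking that the profinite statement ``can be deduced from the version mentioned above using a straightforward limit argument,'' which is exactly the limit argument you carry out (colimit over open normal $U \subseteq G_E$ of $G_F$, triviality of the $U$-action on the coinduced module, cofinality via normal cores, and compatibility of the finite Shapiro maps with inflation).
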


With the preparations above, we can now prove our primary converse to Theorem \ref{thm-U5-implies-rational-point}.
\begin{thm}\label{thm-rational-point-implies-U5}
Let~$F$ be a field of characteristic not~$2$, let~$a, b, c, d \in F^\times$ be given, and put~$E := F[\sqrt a, \sqrt d]$.
Assume that there exist~$\tilde B \in F_a$ such that~$\Norm_{F_a/F}(\tilde B)= b$ modulo squares, and~$\tilde C \in F_d$ such that $\Norm_{F_d/F}(\tilde C) = c$ modulo squares, with the following additional property: if~$B$ is the image of~$\tilde B$ under~$F_a \to F[\sqrt a]$ and~$C$ is the image of~$\tilde C$ under~$F_d \to F[\sqrt d]$, then $(B,C)_E = (B, c)_E =(b, C)_E = (b,c)_E= 0$.

 Then there exist a continuous homomorphism~$\phi : G_F \to \uf$ such that~$s_1 \circ \phi = \chi_a$, $s_2 \circ \phi= \chi_b$, $s_3 \circ \phi = \chi_c$ and~$s_4 \circ \phi = \chi_d$.
 In other words, the Massey product~$\langle a, b, c, d\rangle$ is defined and vanishes.
\end{thm}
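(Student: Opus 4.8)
The plan is to produce $\phi$ as a lift of a homomorphism $\bar\phi \colon G_F \to G = D_4 \times D_4$ through the extension $0 \to S \to \uf \to G \to 1$. First I would build $\bar\phi$ by applying Proposition~\ref{prop-D4-iff-cup-prop-iff-norm} to each factor. The hypothesis $\Norm_{F_a/F}(\tilde B) = b$ modulo squares gives, via condition (3) of that proposition (note $z_1 \neq 0$ since $b \neq 0$), a consistent homomorphism into the first $D_4$ with $s_1 \circ \bar\phi = \chi_a$, $s_2 \circ \bar\phi = \chi_b$, and consistency $x_1 \circ \bar\phi = \chi_{bB}$, $x_2 \circ \bar\phi = \chi_B$; symmetrically, $\Norm_{F_d/F}(\tilde C) = c$ modulo squares yields the second $D_4$ with $s_4 \circ \bar\phi = \chi_d$, $s_3 \circ \bar\phi = \chi_c$, and $x_4 \circ \bar\phi = \chi_{cC}$, $x_3 \circ \bar\phi = \chi_C$. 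Taking the product gives $\bar\phi \colon G_F \to G$. Since each of $s_1, \ldots, s_4$ vanishes on $S$ and therefore factors through $G$, any lift $\phi$ of $\bar\phi$ to $\uf$ automatically satisfies $s_1 \circ \phi = \chi_a$, $s_2 \circ \phi = \chi_b$, $s_3 \circ \phi = \chi_c$, $s_4 \circ \phi = \chi_d$; so it remains only to produce the lift.

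The existence of $\phi$ is governed by a single obstruction, the pull-back $\bar\phi^*(\alpha) \in \h^2(F, S)$ of the class $\alpha \in \h^2(G, S)$ of the extension $0 \to S \to \uf \to G \to 1$, where $S$ carries the $G_F$-action induced through $\bar\phi$. I would compute this with Shapiro's lemma, exactly as prepared above. The action of $G_F$ on $S$ factors through $G/N = \langle \sigma_1, \sigma_4 \rangle \cong C_2^2$ via $(\chi_a, \chi_d)$, whose fixed field is $E$; writing $H = \Gal(E/F)$ for the image of this map, Lemma~\ref{lem-S-is-free} identifies $S$ with $\co_{G_E}^{G_F}(\f_2^{\,r})$ for $r = 4/|H|$, so that $\h^2(F, S) \cong \h^2(E, \f_2)^{\,r}$ by the profinite Shapiro isomorphism. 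Under this identification, naturality of Shapiro carries $\bar\phi^*(\alpha)$ to the pull-backs along $f \colon G_E \to N$ of the cross-term classes $x_ix_j$ ($i \in \{1,2\}$, $j \in \{3,4\}$) computed in Lemma~\ref{lem-classes-extensions-of-N}. Since $x_1 \circ \bar\phi = \chi_{bB}$, $x_2 \circ \bar\phi = \chi_B$, $x_3 \circ \bar\phi = \chi_C$, $x_4 \circ \bar\phi = \chi_{cC}$, each $f^*(x_ix_j)$ equals one of $(B,C)_E$, $(bB,C)_E$, $(B,cC)_E$, $(bB,cC)_E$, and by bilinearity these lie in the $\f_2$-span of $(B,C)_E$, $(b,C)_E$, $(B,c)_E$, $(b,c)_E$ --- all of which vanish by hypothesis. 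Hence $\bar\phi^*(\alpha) = 0$ and the required lift $\phi$ exists.

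In the generic situation where $a, d, ad$ are all non-squares one has $|H| = 4$, $r = 1$, and the obstruction is exactly the class $x_2 x_3$, that is $(B,C)_E$, so a single one of the four cup-product conditions already suffices. The remaining conditions are needed to treat the degenerate cases, where one or more of $a$, $d$, $ad$ is a square: then $H$ is a proper subgroup of $C_2^2$, the field $E$ may collapse, the rank $r = 4/|H|$ jumps to $2$ or $4$, and $\bar\phi^*(\alpha)$ acquires several components, each a different combination of the four cup-products over $E$ (this is the purpose of the alternative presentation $S \cong \co_N^{G'}(\f_2 \oplus \f_2)$ recorded above, giving $x_1x_3$ and $x_2x_3$, and of its further degenerations). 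In the fully degenerate case where $a$ and $d$ are both squares, $\bar\phi$ lands in $N$, one has $E = F$ and $r = 4$, and all four conditions are genuinely used. I expect this bookkeeping to be the main obstacle: one must re-express $S$ as a coinduced module over the correct subgroup, choose a basis realizing the coinduction, and verify through the cocycle computation of Lemma~\ref{lem-classes-extensions-of-N} that every component of the obstruction lies in the span of the four vanishing cup-products, so that $\bar\phi^*(\alpha) = 0$ in every case.
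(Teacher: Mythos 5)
Your proposal is correct, and for the non-degenerate cases (neither $a$ nor $d$ a square) it coincides with the paper's proof: build $f\colon G_F\to D_4\times D_4$ consistently via Proposition~\ref{prop-D4-iff-cup-prop-iff-norm}, then kill the lifting obstruction $f^*(\alpha)\in\h^2(F,S)$ by Shapiro's lemma, using $S\cong\co_{G_E}^{G_F}(\f_2)$ when $[E:F]=4$ (single component $x_2x_3\mapsto(B,C)_E$) and $S\cong\co_{G_E}^{G_F}(\f_2\oplus\f_2)$ when $ad$ is a square (components $x_2x_3, x_1x_3\mapsto (B,C)_E,(bB,C)_E$). Where you genuinely diverge is the degenerate case in which $a$ (or $d$) is itself a square: the paper abandons the Shapiro machinery there and instead invokes the triple Massey Vanishing theorem ($n=3$, due to Matzri, Efrat--Matzri, Min\'a\v{c}--T\^an) -- it deduces $(b,c)_F=(c,d)_F=0$, obtains $\psi\colon G_F\to\U_4(\f_2)$ compatible with $b,c,d$, and embeds $\U_4(\f_2)\hookrightarrow\uf$ as the matrices whose first row is that of the identity. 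Your uniform treatment -- restricting $S$ to the preimage of $H=\operatorname{im}(G_F\to G/N)$, getting a free $\f_2[H]$-module of rank $r=4/|H|$, hence $\co_{G_E}^{G_F}(\f_2^r)$ -- is exactly the alternative the paper only sketches in Remark~\ref{rmk-three-vanishing-not-needed}. What your route buys is self-containedness: no appeal to the deep $n=3$ theorem (the paper itself flags this as an advantage of the sketch). What it costs is bookkeeping, and one concrete item you should make explicit: in the fully degenerate case ($a$ and $d$ both squares, $\bar\phi$ landing in $N$, $r=4$) the obstruction has a fourth component, the class of $\cent(S)/S_4\to N$ where $S_4=\langle e_1,e_2,e_3\rangle$, which Lemma~\ref{lem-classes-extensions-of-N} does \emph{not} compute (the paper explicitly sets $S_4$ aside, which is legitimate for \emph{its} proof but not for yours). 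A cocycle computation identical to the one in that lemma's proof gives this class as $x_1x_4$, pulling back to $(bB,cC)_E$, which lies in the span of the four vanishing cup-products; with that supplied, your argument closes in every case.
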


\begin{proof}
We start by assuming that neither~$a$ nor~$d$ is a square in~$F$ (we identify~$F_a$ and~$F_d$ with~$F[\sqrt a]$ and~$F[\sqrt d]$ respectively). From Proposition~\ref{prop-D4-iff-cup-prop-iff-norm} (applied twice), we obtain a homomorphism
\[ f \colon G_F \longrightarrow D_4 \times D_4 = G  \]
such that~$s_i \circ f= \chi_a, \chi_b, \chi_c, \chi_d$ according as~$i= 1, 2, 3, 4$, and also~$f^*(x_1) = \chi_{bB}$, $f^*(x_2) = \chi_B$, $f^*(x_3) = \chi_C$, $f^*(x_4)= \chi_{cC}$. If~$\alpha \in \h^2(D_4 \times D_4, S)$ is the class of the extension
\[ 0 \longrightarrow S \longrightarrow \uf \longrightarrow D_4 \times D_4 \longrightarrow 1 \, ,   \]
then its pull-back~$f^*(\alpha) \in \h^2(F,S)$ is represented by the fibered-product of~$\uf$ with~$G_F$ over~$D_4 \times D_4$ (via $f$). 
To conclude the proof of the theorem, we will show that one has~$f^*(\alpha) = 0$, so that this fibered-product is a split extension of $G_F$ by $S$.
The composition of such a splitting with the projection to $\uf$ provides the necessary homomorphism $\phi$.

Note that~$G_E = f^{-1}(N)$. Suppose first that~$[E : F]= 4$. The~$G_F$-module~$S$ is isomorphic to~$\co_{G_E}^{G_F}(\f_2)$, the trivial module of~$G_E$ (co)induced to~$G_F$. Shapiro's lemma applies, yielding the isomorphism
\[ \h^2(F, S) \longrightarrow \h^2(E, \f_2) \, .   \]
From the comments above and the naturality of Shapiro's isomorphism, we see that~$f^*(\alpha )$ is taken to~$f^*(x_2x_3) = f^*(x_2)f^*(x_3) = (B,C)_E = 0$, so we are done in this case.

We turn to the case where $a = d$ mod squares and $[E:F] = 2$; another way to phrase this is by saying that the image of~$f$ lies within~$G'$. Now the~$G_F$-module~$S$ is isomorphic, although not canonically, to~$\co_{G_E}^{G_F}(\f_2 \oplus \f_2)$: we have at least the two possibilities given in the discussion preceding the proof, and for definiteness say we pick the basis~$e_1, e_2$.

Now Shapiro's lemma gives an isomorphism
\[ \h^2(F, S) \longrightarrow \h^2(E, \f_2 \oplus \f_2) = \h^2(E, \f_2) \oplus \h^2(E, \f_2) \, .   \]
This takes~$f^*(\alpha )$ to a pair of cohomology classes, and again from naturality, they are~$f^*(x_2x_3) = (B,C)_E$ and~$f^*(x_1 x_3) = (bB, C)_E$. These are both zero by assumption, and~$f^*(\alpha ) = 0$ also in this case.

Finally, suppose that~$a$ is a square in~$F$ (a symmetric argument deals with the case when~$d$ is a square). A neat way to handle this is to use the Massey Vanishing Conjecture for~$n=3$, (which is now a theorem, see~\cite{Matzri} \cite{EM-triple} \cite{MT} \cite{MT2}).
First we claim that~$(b,c)_F= 0$. Indeed, $(b,c)_E = 0$ by assumption; if~$d$ is also a square in~$F$ then~$E=F$, and otherwise~$E=F[\sqrt d]$ and~$\Norm_{E/F}(C)=c$ mod squares, so~$(b, C)_E=0$ implies~$(b,c)_F = 0$ by the projection formula in group cohomology. We also have~$(c, d)_F=0$ by Proposition~\ref{prop-D4-iff-cup-prop-iff-norm}.  We deduce that the Massey product~$\langle b, c, d\rangle$ vanishes, and so that there is a continuous~$\psi \colon G_F \to \U_4(\f_2)$ compatible with~$b, c, d$. However, if we see~$\U_4(\f_2)$ as the subgroup of~$\uf$ consisting of those matrices whose first row is that of the identity matrix, we see that we have built~$G_F \to \uf$ compatible with~$1, b, c, d$, as required.
\end{proof}

\begin{rmk} \label{rmk-three-vanishing-not-needed}
It is possible to avoid relying on the Massey Vanishing Conjecture for~$n=3$ if one wishes to do so. Here is a sketch. When~$d$ is not a square, use another argument based on Shapiro's lemma, this time replacing~$G'$ by~$G''$, the subgroup of~$G$ of elements mapping to~$\langle \sigma_d \rangle \subset G/N$. When~$d$ is a square, define~$f$ as in the first part of the proof; this time~$f$ takes values in~$N$. The exact sequence
\[ 0 \longrightarrow S \longrightarrow \cent(S) \longrightarrow N \longrightarrow 1  \]
is {\em central}, and controlled by four cohomology classes in~$\h^2(N, \f_2)$, pulling back to~$(B,C)$, $(b, C)$, $(B,c)$ and~$(b,c)$ in~$\h^2(F, \f_2)$ under~$f^*$.
\end{rmk}

The proof of Theorem~\ref{thm-main-four-cup-products-without-tildes} is now almost complete. In fact, what we have is this:

\begin{thm} \label{thm-main-four-cup-products}
  Let~$F$ be a field of characteristic not~$2$, and let~$a, b, c, d \in F^\times$ be given.
  Then the following statements are equivalent.

  \begin{enumerate}
  \item $\langle a, b, c, d \rangle$ is defined and vanishes.
    \item There exist~$\tilde B \in F_a$ such that~$\Norm_{F_a/F}(\tilde B)= b$ modulo squares, and~$\tilde C \in F_d$ such that $\Norm_{F_d/F}(\tilde C) = c$ modulo squares, with the following extra property. If~$B$ is the image of~$\tilde B$ under~$F_a \to F[\sqrt a]$ and~$C$ is the image of~$\tilde C$ under~$F_d \to F[\sqrt d]$, then $(B,C)_{F[\sqrt a, \sqrt d]} = 0$, $(B, c)_{F[\sqrt a]} = 0$, $(b, C)_{F[\sqrt d]}=0$, and $(b,c)_F= 0$.
    \item There exist~$\tilde B \in F_a$ such that~$\Norm_{F_a/F}(\tilde B)= b$ modulo squares, and~$\tilde C \in F_d$ such that $\Norm_{F_d/F}(\tilde C) = c$ modulo squares, with the following extra property. If~$B$ is the image of~$\tilde B$ under~$F_a \to F[\sqrt a]$ and~$C$ is the image of~$\tilde C$ under~$F_d \to F[\sqrt d]$, then $(B,C)_E = (B, c)_E =(b, C)_E  = (b,c)_E= 0$.

  \end{enumerate}
\end{thm}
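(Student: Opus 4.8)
The plan is to establish the cycle of implications $(1) \Rightarrow (2) \Rightarrow (3) \Rightarrow (1)$, so that the bulk of the work has already been carried out in Theorems~\ref{thm-U5-implies-rational-point} and~\ref{thm-rational-point-implies-U5}; the only genuinely new step is the easy implication $(2) \Rightarrow (3)$.

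For $(1) \Rightarrow (2)$ I would simply invoke Theorem~\ref{thm-U5-implies-rational-point}. Condition (1) of the present theorem says precisely that $\langle a,b,c,d\rangle$ is defined and vanishes, i.e.\ that a defining system $\bar\phi \colon G_F \to \bar\U_5(\f_2)$ exists and lifts to some $\phi \colon G_F \to \uf$. Assertions (1), (2), and (4) of Theorem~\ref{thm-U5-implies-rational-point} then furnish elements $\tilde B \in F_a$, $\tilde C \in F_d$ with $\Norm_{F_a/F}(\tilde B) = b$ and $\Norm_{F_d/F}(\tilde C) = c$ modulo squares, together with the vanishing of $(B,c)_{F[\sqrt a]}$, $(b,C)_{F[\sqrt d]}$, $(b,c)_F$, and $(B,C)_E$ --- which is exactly condition (2).

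For $(2) \Rightarrow (3)$ I would use functoriality of the cup-product under restriction. The three cup-products in (2) that live over proper subfields, namely $(B,c)_{F[\sqrt a]}$, $(b,C)_{F[\sqrt d]}$, and $(b,c)_F$, all restrict to the corresponding cup-products over $E$, since the restriction map $\h^*(\,\cdot\,,\f_2) \to \h^*(E,\f_2)$ is a homomorphism of graded rings and carries $\chi_B$, $\chi_b$, $\chi_c$, $\chi_C$ over a subfield to the classes of the same names over $E$. Hence their vanishing over the respective subfield forces $(B,c)_E = (b,C)_E = (b,c)_E = 0$; and $(B,C)_E = 0$ is already assumed in (2), since $E = F[\sqrt a, \sqrt d]$. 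This gives (3). The same reasoning applies verbatim when $a$ or $d$ happens to be a square, in which case the relevant ``subfield'' is simply $F$ itself and the argument degenerates harmlessly.

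Finally, $(3) \Rightarrow (1)$ is exactly the content of Theorem~\ref{thm-rational-point-implies-U5}: its hypotheses are the two norm conditions together with the vanishing of all four cup-products $(B,C)_E$, $(B,c)_E$, $(b,C)_E$, $(b,c)_E$ over $E$, and its conclusion is that $\langle a,b,c,d\rangle$ is defined and vanishes. Closing the cycle yields the desired three-way equivalence. There is no real obstacle here, as the heavy lifting sits in the two preceding theorems; the one point requiring care is that one should \emph{not} attempt to prove $(3) \Rightarrow (2)$ directly, since the $E$-cup-products genuinely carry less information than the subfield cup-products, and the equivalence of (2) and (3) holds only once it is routed through the common vanishing condition (1).
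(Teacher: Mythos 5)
Your proposal is correct and follows essentially the same route as the paper's own proof: the cycle $(1) \Rightarrow (2)$ via Theorem~\ref{thm-U5-implies-rational-point}, the trivial restriction argument for $(2) \Rightarrow (3)$, and $(3) \Rightarrow (1)$ via Theorem~\ref{thm-rational-point-implies-U5}. The paper states this in one line; your added remarks (handling of the square cases, and the warning against trying $(3) \Rightarrow (2)$ directly) are accurate but not needed.
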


\begin{proof}
Theorem~\ref{thm-U5-implies-rational-point} shows the implication (1) $\implies$ (2), while (2) $\implies$ (3) is trivial, and Theorem~\ref{thm-rational-point-implies-U5} shows (3) $\implies$ (1).
\end{proof}

The only difference with Theorem~\ref{thm-main-four-cup-products-without-tildes} is the presence of the elements~$\tilde B$ and~$\tilde C$ instead of just~$B, C$.
Clearly, if neither~$a$ nor~$d$ is a square in~$F$, then the two results are the same.
On the other hand, when one of these two elements is a square, in fact when one of~$a, b, c, d$ is a square, things become very easy, as we proceed to show in the following subsection.

\subsection{Some trivial cases}

\begin{lem} \label{lem-when-abcd-square}
Let~$F$ be a field of characteristic different from~$2$ and let~$a, b, c, d \in F^\times$ be given.
Suppose that $(a, b)_F = (b,c)_F = (c, d)_F = 0$.
Finally, assume that one of~$a, b, c, d$ is a square in~$F$. Then the Massey product $\langle a, b, c, d \rangle$ is defined and vanishes.
\end{lem}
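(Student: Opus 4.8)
The plan is to split into cases according to which of $a,b,c,d$ is a square, exploiting the fact that a square entry ``decouples'' the $4$-Massey product into a problem that has already been solved. Since any single square entry suffices, the resulting case division (which may overlap) is exhaustive, and there is no circularity: both Theorem~\ref{thm-main-four-cup-products} and the $n=3$ Massey Vanishing theorem (\cite{Matzri} \cite{EM-triple} \cite{MT} \cite{MT2}) are already available at this point.

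First I would dispose of the two \emph{end} cases, say $a$ a square (the case $d$ a square being symmetric under the order-reversing automorphism of $\uf$ from \S\ref{subsec-notation-Un}). Since $\chi_a = 0$, it suffices to produce a continuous $\phi \colon G_F \to \uf$ with $s_1 \circ \phi = 0$ and $(s_2, s_3, s_4)\circ \phi = (\chi_b, \chi_c, \chi_d)$. The hypotheses $(b,c)_F = (c,d)_F = 0$ guarantee that the triple Massey product $\langle b,c,d\rangle$ is \emph{defined}, whence it vanishes by the Massey Vanishing theorem for $n=3$; concretely this yields $\psi \colon G_F \to \U_4(\f_2)$ compatible with $b,c,d$. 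Composing $\psi$ with the embedding $\U_4(\f_2) \hookrightarrow \uf$ onto the matrices whose first row is that of the identity matrix (exactly as in the last paragraph of the proof of Theorem~\ref{thm-rational-point-implies-U5}) produces the required $\phi$, so $\langle a,b,c,d\rangle$ is defined and vanishes.

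For the two \emph{middle} cases I would instead invoke the cup-product criterion of Theorem~\ref{thm-main-four-cup-products}, verifying condition (3). Suppose $b$ is a square (the case $c$ a square being symmetric). Take $\tilde B = 1 \in F_a$, so that $\Norm_{F_a/F}(\tilde B) = 1 \equiv b$ modulo squares and $B = 1$. Because $(c,d)_F = 0$, Proposition~\ref{prop-D4-iff-cup-prop-iff-norm} supplies $\tilde C \in F_d$ with $\Norm_{F_d/F}(\tilde C) = c$ modulo squares. The four cup products over $E$ then vanish for trivial reasons: the two involving $B = 1$ are zero, and the two involving $b$ are zero since $\chi_b = 0$. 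Thus condition (3) of Theorem~\ref{thm-main-four-cup-products} holds, and $\langle a,b,c,d\rangle$ vanishes.

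The only genuinely delicate point is choosing the right reduction in each regime, and I would flag this explicitly. The middle cases cannot be treated by embedding a smaller unipotent group, since there is no copy of $\U_4(\f_2)$ inside $\uf$ that kills the middle near-diagonal entry while retaining the other three; this is why the cup-product criterion is used there. Conversely, in the end cases the criterion would force one to control a cup product such as $(b,C)_E$, which need \emph{not} vanish for the naive choice $B=1$ (the projection formula only gives that its corestriction to $F$ vanishes), so the triple-product reduction is the clean route. This asymmetry between the two regimes is the main thing to get right; everything else is formal.
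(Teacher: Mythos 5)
Your proof is correct, and your treatment of the end cases ($a$ or $d$ a square) is exactly the paper's: both reduce to the vanishing of $\langle b,c,d\rangle$ via the $n=3$ Massey Vanishing theorem and then embed $\U_4(\f_2)$ into $\uf$ as the matrices whose first row is that of the identity. For the middle cases ($b$ or $c$ a square) you take a genuinely different route: you verify condition (3) of Theorem~\ref{thm-main-four-cup-products} with $\tilde B = 1$, whereas the paper argues directly at the group level --- from $(c,d)_F = 0$ and Proposition~\ref{prop-D4-iff-cup-prop-iff-norm} it obtains $G_F \to \U_3(\f_2)$ compatible with $c,d$, and combines this with $\chi_a \colon G_F \to C_2$ through the subgroup $\langle \sigma_1, \sigma_3, \sigma_4\rangle \cong C_2 \times \U_3(\f_2)$ of $\uf$, yielding a homomorphism with $s_2 \circ \phi = 0 = \chi_b$. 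The paper's route is more elementary, avoiding the machinery behind Theorem~\ref{thm-main-four-cup-products} (in particular Shapiro's lemma) for this case; yours is more uniform, and your verification there is complete: the two products involving $B=1$ vanish trivially, and the two involving $b$ vanish because $\chi_b = 0$. One aside in your final paragraph is inaccurate: the middle cases \emph{can} be handled by embedding a smaller group --- just not $\U_4(\f_2)$; the right subgroup is $C_2 \times \U_3(\f_2)$ generated by $\sigma_1,\sigma_3,\sigma_4$, which is precisely what the paper uses. Your observation about the end cases is, however, well taken: with naive choices the criterion only controls $\cores\bigl((b,C)_E\bigr)$, not $(b,C)_E$ itself, so the reduction to the triple product is indeed the clean route there.
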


\begin{proof}
  First assume that~$a$ is a square, or equivalently~$a=1$. Then the argument given in the last paragraph of the proof of Theorem~\ref{thm-rational-point-implies-U5} shows that $0 \in \langle a, b, c, d \rangle$.  The case when~$d=1$ is treated similarly.

  On the other hand, suppose that~$b= 1$. From~$(c, d)_F = 0$, we draw the existence of $G_F \to \U_3(\f_2)$ compatible with~$c$ and~$d$, by Proposition~\ref{prop-D4-iff-cup-prop-iff-norm}. The element~$a$ itself defines~$\chi_a \colon G_F \to \f_2 \cong  C_2$. Since the subgroup of~$\uf$ generated by~$\sigma_1, \sigma_3, \sigma_4$ is isomorphic to~$C_2 \times \U_3(\f_2)$, we see immediately that we may combine our two homomorphisms into one of the form~$G_F \to \uf$, showing that $0 \in \langle a, 1, b, c \rangle$. The case when~$c=1$ is treated similarly.
\end{proof}

\begin{rmk}
The above proof, {\em via} the reference to the proof of Theorem~\ref{thm-rational-point-implies-U5}, uses the Massey Vanishing Conjecture for~$n=3$. Without this, it is still a general fact about Massey products that $\langle a, b, c, d \rangle$ vanishes when it is defined and one of them is a square (\cite{fenn}, Lemma 6.2.4). However, the statement just given is stronger.
\end{rmk}

We proceed to show how we can improve the statement of Theorem~\ref{thm-main-four-cup-products} to that of Theorem~\ref{thm-main-four-cup-products-without-tildes} from the Introduction, so that the two are in fact equivalent. We have already mentioned that this is obvious when neither~$a$ nor~$d$ is a square in~$F$.

Let us call (1A), (2A), (3A) the conditions of Theorem~\ref{thm-main-four-cup-products-without-tildes}, and keep (1), (2), (3) for those of Theorem~\ref{thm-main-four-cup-products}, which we know are equivalent.  Note (1) $=$ (1A).

Suppose~$a$ is a square in~$F$, but not~$d$. Assume condition (1A). We must show that condition (2A) holds. Indeed, from condition (2), we have the element~$C$ with~$\Norm_{F[\sqrt d]/F}(C)=c$ mod squares, and satisfying~$(b, C)_{F[\sqrt d]} = 0$, and moreover~$(b, c)_F= 0$. Now put~$B= b \in F=F[\sqrt a]$, so that~$\Norm_{F[\sqrt a]/F}(B) = B = b$. Then~$(B,C)_E = (b, C)_E = 0$, while~$(B,c)_{F[\sqrt a]} = (b,c)_F = 0$. We do have condition (2A), and so also (3A).

Conversely, condition (3A) is enough to ensure that~$(a,b)_F=(1,b)_F= 0$, $(b,c)_F=0$ (apply the projection formula to~$(b,C)_E = 0$) and~$(c, d)_F=0$ (merely because~$C$ exists, cf Proposition~\ref{prop-D4-iff-cup-prop-iff-norm}). So the last Lemma applies and shows that condition (1A) holds.

The situation when~$a$ is not a square, but~$d$ is, is clearly similar.

Now suppose~$a$ and~$d$ are both squares. Suppose condition (1A) holds, and so also (2), and we have~$(a, b)=(c,d)=0$ (because~$\tilde B$ and~$\tilde C$ exist, cf remark after Theorem~\ref{thm-U5-implies-rational-point}) and~$(b,c)=0$. Thus condition (2A) holds with~$B=b$ and~$C=c$, and (2A) $=$ (3A) here. Conversely, condition (3A) contains the statement $(b,c)=0$, while $(a, b)= (1, b)=0$ and~$(c, d)= (c, 1)= 0$, and we see by the last Lemma that condition (1A) holds.
We have therefore just proven Theorem \ref{thm-main-four-cup-products-without-tildes}.

\begin{rmk}
Theorem~\ref{thm-main-four-cup-products} is heavier on notation than Theorem~\ref{thm-main-four-cup-products-without-tildes}, so we have deemed it unfit for the Introduction. However, the extra case-by-case considerations needed to establish the latter, as just given, are perhaps an indication that it is less natural. (Also, it relies more seriously on the Massey Vanishing Conjecture for~$n=3$, see Remark~\ref{rmk-three-vanishing-not-needed}.) In the sequel we shall refer to Theorem~\ref{thm-main-four-cup-products}, rather than to Theorem~\ref{thm-main-four-cup-products-without-tildes}.
\end{rmk}

\subsection{Maps from profinite groups into~$\uf$}

A routine modification of the arguments given above produces the next result.

\begin{thm}
  Let~$\Gamma $ be a profinite group, and let~$\chi_1, \chi_2, \chi_3, \chi_4 \in \h^1(\Gamma, \f_2)$ be given.
  Put~$\Gamma_1 := \ker(\chi_1)$, $\Gamma_4 := \ker(\chi_4)$, and $\Gamma_{14} := \Gamma_1 \cap \Gamma_4$. The the following statements are equivalent.

  \begin{enumerate}
  \item There exists a continuous homomorphism $\phi \colon \Gamma \to \uf$ such that~$\chi_i = s_i \circ \phi $ for~$i= 1, 2, 3, 4$. In other words, $\langle \chi_1, \chi_2, \chi_3, \chi_4 \rangle$ is defined and vanishes.
    \item There exist a continuous homomorphism~$\Gamma \to D_4$ given by
\[ \gamma \mapsto \left(\begin{array}{rrr}
  1 & \chi_1(\gamma ) & \zeta_1(\gamma ) \\
  0 & 1 & \chi_2(\gamma ) \\
  0 & 0 & 1
\end{array}\right) \, ,   \]
and another one given by
\[ \gamma \mapsto \left(\begin{array}{rrr}
  1 & \chi_3(\gamma ) & \zeta_2(\gamma ) \\
  0 & 1 & \chi_4(\gamma ) \\
  0 & 0 & 1
\end{array}\right) \, ,   \]
such that~$\zeta_1 |_{\Gamma_{14}} \cupp \zeta_2 |_{\Gamma_{14}} = 0$, $\zeta_1 |_{\Gamma_1} \cupp \chi_3|_{\Gamma_1} = 0$, $\chi_2 |_{\Gamma_4} \cupp \zeta_2 |_{\Gamma_{4}} = 0$, $\chi_2 \cupp \chi_3 = 0$.

\item $\zeta_1$ and~$\zeta_2$ exist as above and satisfy~$\zeta_1 |_{\Gamma_{14}} \cupp \zeta_2 |_{\Gamma_{14}} = \zeta_1 |_{\Gamma_{14}} \cupp \chi_3|_{\Gamma_{14}} = \chi_2 |_{\Gamma_{14}} \cupp \zeta_2 |_{\Gamma_{14}} = \chi_2|_{\Gamma_{14}} \cupp \chi_3|_{\Gamma_{14}} = 0$.
\end{enumerate}
\end{thm}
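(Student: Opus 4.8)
The plan is to transcribe the proofs of Theorems~\ref{thm-U5-implies-rational-point} and~\ref{thm-rational-point-implies-U5} into the purely group-theoretic setting, following the cycle $(1) \implies (2) \implies (3) \implies (1)$ exactly as in Theorem~\ref{thm-main-four-cup-products}. The essential observation is that everything in those proofs is cohomological except for the appeals to Proposition~\ref{prop-D4-iff-cup-prop-iff-norm}, and the only content of that Proposition which survives the translation is the equivalence of its conditions (1) and (2): for any profinite group $\Gamma$ and any $\chi, \chi' \in \H^1(\Gamma, \f_2)$, there is a continuous homomorphism $\Gamma \to D_4$ with $s_1 \circ(\cdot) = \chi$ and $s_2 \circ(\cdot) = \chi'$ if and only if $\chi \cup \chi' = 0$. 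This holds for \emph{every} $\Gamma$ by Lemma~\ref{lem-coho-class-D4}, since the obstruction to lifting $(\chi, \chi') \colon \Gamma \to C_2 \times C_2$ along $D_4 \to C_2\times C_2$ is the pull-back of $\bar s_1 \bar s_2$. Under this dictionary the subgroups $\Gamma, \Gamma_1, \Gamma_4, \Gamma_{14}$ play the roles of $F, F[\sqrt a], F[\sqrt d], E$, the characters $\zeta_1 = t_1 \circ \bar\phi$ and $\zeta_2 = t_2 \circ \bar\phi$ replace the classes $\chi_{bB}$ and $\chi_{cC}$, and the norm conditions of Theorem~\ref{thm-main-four-cup-products} simply evaporate, as they were automatic consequences of the existence of the $D_4$-lifts via corestriction.

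For $(1) \implies (2)$ I would follow the proof of Theorem~\ref{thm-U5-implies-rational-point} verbatim. Composing $\phi$ with $\uf \to \bar\U_5(\f_2) \to D_4 \times D_4$ produces the two displayed homomorphisms to $D_4$ and defines $\zeta_1, \zeta_2$. The relation $\chi_2 \cup \chi_3 = 0$ on all of $\Gamma$ comes from the homomorphism $\uf \to D_4$ which discards the top row and rightmost column; the relations $\zeta_1|_{\Gamma_1} \cup \chi_3|_{\Gamma_1} = 0$ and $\chi_2|_{\Gamma_4} \cup \zeta_2|_{\Gamma_4} = 0$ come from the homomorphism $\pi \colon \ker(s_1) \to D_4$ and its mirror image, using that $\Gamma_1 = \bar\phi^{-1}(\ker s_1)$; and the final relation $\zeta_1|_{\Gamma_{14}} \cup \zeta_2|_{\Gamma_{14}} = 0$ comes from the genuine lift to $\uf$ restricted to $\Gamma_{14} = \phi^{-1}(\cent(S))$, exactly as in part~(4) of Theorem~\ref{thm-U5-implies-rational-point}: the map $f \colon \Gamma_{14} \to N$ factors through each $\cent(S)/S_i$, so Lemma~\ref{lem-classes-extensions-of-N} forces $f^*(x_1 x_3) = 0$; since $x_1 = \zeta_1$ and $x_3 = \chi_3 + \zeta_2$ on $N$, this reads $\zeta_1 \cup \chi_3 + \zeta_1 \cup \zeta_2 = 0$ on $\Gamma_{14}$, whence $\zeta_1 \cup \zeta_2 = 0$ there.

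The implication $(2) \implies (3)$ is immediate: one simply restricts each of the four cup-products to $\Gamma_{14}$, using $\Gamma_{14} \subseteq \Gamma_1, \Gamma_4 \subseteq \Gamma$. For $(3) \implies (1)$ I would imitate Theorem~\ref{thm-rational-point-implies-U5}. Assemble $f \colon \Gamma \to D_4 \times D_4 = G$ from the two $D_4$-maps; it suffices to show that the pull-back $f^*(\alpha) \in \H^2(\Gamma, S)$ of the class $\alpha$ of $0 \to S \to \uf \to G \to 1$ vanishes, for then a splitting composed with the projection to $\uf$ furnishes $\phi$. When $[\Gamma : \Gamma_{14}] = 4$, Lemma~\ref{lem-S-is-free} identifies $S$ with $\co_{\Gamma_{14}}^\Gamma(\f_2)$, and the profinite Shapiro isomorphism carries $f^*(\alpha)$ to $f^*(x_2 x_3)$, which is an $\f_2$-combination of the four monomials in condition~(3) and hence $0$.

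The main obstacle is precisely the degenerate cases, where $\chi_1$, $\chi_4$, or $\chi_1 + \chi_4$ vanishes (the analogues of $a$, $d$, or $ad$ being a square). Here $\Gamma/\Gamma_{14}$ is strictly smaller than $C_2^2$, the module $S$ is no longer coinduced from $\Gamma_{14}$ but from a larger subgroup, and the Shapiro computation must be reorganized accordingly. Crucially, the shortcut used in Theorem~\ref{thm-rational-point-implies-U5}, namely invoking the ($n=3$) Massey Vanishing theorem, is \emph{unavailable} here: that result is special to absolute Galois groups and is false for general profinite groups. One must therefore run the purely group-theoretic variant sketched in Remark~\ref{rmk-three-vanishing-not-needed}, which reorganizes the Shapiro argument around the auxiliary subgroups $G'$, $G''$ of $G$ and reads off the relevant classes directly from the central extension $0 \to S \to \cent(S) \to N \to 1$ via Lemma~\ref{lem-classes-extensions-of-N}. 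Handling these cases uniformly, while keeping track of the fact that conditions~(2) and~(3) already presuppose exactly the lifts $\Gamma \to D_4$ that exist (i.e.\ that $\chi_1 \cup \chi_2 = \chi_3 \cup \chi_4 = 0$), is the only genuinely delicate bookkeeping in an otherwise routine transposition.
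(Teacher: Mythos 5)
Your proposal is correct, and it is precisely the ``routine modification of the arguments given above'' that the paper itself invokes for this statement (the paper explicitly leaves the proof to the reader). In particular, you correctly isolate the one genuinely non-routine point: in the degenerate cases (where $\chi_1$, $\chi_4$ or $\chi_1+\chi_4$ is trivial) one cannot quote the $n=3$ Massey vanishing theorem --- a result about absolute Galois groups which fails for general profinite groups --- and must instead run the Shapiro/central-extension arguments sketched in Remark~\ref{rmk-three-vanishing-not-needed}, exactly as you indicate.
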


We shall have no use for this Theorem in the sequel, so we leave the proof to the reader.

\section{First applications}
\label{sec-first-applications}

While our main objective in this paper is to prove the $4$-Massey Vanishing Conjecture for number fields, there are a few cases which can be treated {\em over any field}. Usually, we use the following trick when working ``by hand''. It will have a more theoretical use below, too.

\begin{prop} \label{prop-modifiers}
  Let~$F$ be a field of characteristic not~$2$, and let~$a, b, c, d \in F^\times$. Let~$\tilde B_0 \in F_a$ be any initial element such that~$\Norm_{F_a/F}(\tilde B_0) = b$, and likewise let~$\tilde C_0 \in F_d$ be any initial element such that~$\Norm_{F_d/F}(\tilde C_0)= c$. The following statements are equivalent:

  \begin{enumerate}
  \item There exist~$\tilde B \in F_a$ with~$\Norm_{F_a/F}(\tilde B)=b z_1^2$, and~$\tilde C \in F_d$ with~$\Norm_{F_d/F}(\tilde C)= cz_2^2$, for some~$z_1, z_2 \in F^\times$, such that we have simultaneously $(B, C)_{F[\sqrt a, \sqrt d]}=0$, $ (B, c)_{F[\sqrt a]} =0$,  $(b, C)_{F[\sqrt d]}=0$,  $ (b, c)_F = 0$, where~$B \in F[\sqrt a]$ and~$C \in F[\sqrt d]$ correspond to~$\tilde B, \tilde C$ respectively.

    \item There exist~$\beta , \gamma \in F^\times$ such that~$\tilde B = \beta \tilde B_0$ and~$\tilde C = \gamma \tilde C_0$ satisfy the previous condition.
\end{enumerate}

\end{prop}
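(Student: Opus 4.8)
The implication $(2)\Rightarrow(1)$ will be immediate: condition $(2)$ simply exhibits elements $\tilde B=\beta\tilde B_0$ and $\tilde C=\gamma\tilde C_0$ of exactly the shape demanded by $(1)$. So the entire content lies in $(1)\Rightarrow(2)$: starting from arbitrary $\tilde B,\tilde C$ witnessing $(1)$, I must produce scalars $\beta,\gamma\in F^\times$ for which $\beta\tilde B_0$ and $\gamma\tilde C_0$ still witness $(1)$. Since $\tilde B$ and $\tilde C$ enter the four conditions symmetrically and essentially independently (the only condition coupling them, $(B,C)_E=0$, depends on each only through its class modulo squares), I would treat $\tilde B$ and $\tilde C$ by the same argument and discuss only $\tilde B$.

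The guiding observation is that every quantity in $(1)$ sees $\tilde B$ only through the class of its image $B$ modulo squares in $F[\sqrt a]$ --- the cup products $(B,c)_{F[\sqrt a]}$ and $(B,C)_E$ are built from $\chi_B$ via Kummer theory --- together with $\Norm_{F_a/F}(\tilde B)$ modulo squares. Hence it is enough to find $\beta\in F^\times$ with $\tilde B\equiv\beta\tilde B_0\pmod{(F_a^\times)^2}$: then the image of $\beta\tilde B_0$ is congruent to $B$ modulo squares (the map $F_a\to F[\sqrt a]$ carries squares to squares), so all cup-product conditions survive, while $\Norm_{F_a/F}(\beta\tilde B_0)=\beta^2 b$ is again $b$ modulo squares.

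To manufacture $\beta$ I would invoke Hilbert 90. Writing $\sigma\colon X\mapsto -X$ for the nontrivial $F$-involution of $F_a$, one has $\Norm_{F_a/F}(\xi)=\xi\,\sigma(\xi)$. Because $\Norm_{F_a/F}(\tilde B)=b z_1^2$ and $\Norm_{F_a/F}(\tilde B_0)=b$ with $z_1\in F^\times$, the unit $r:=\tilde B/(z_1\tilde B_0)$ has norm $1$, so Hilbert 90 yields $w\in F_a^\times$ with $r=\sigma(w)/w$. Then $\tilde B=z_1\tilde B_0\,\sigma(w)/w$, and since $\sigma(w)/w=\Norm_{F_a/F}(w)/w^2\equiv\Norm_{F_a/F}(w)\pmod{(F_a^\times)^2}$ with $\Norm_{F_a/F}(w)\in F^\times$, the scalar $\beta:=z_1\Norm_{F_a/F}(w)\in F^\times$ satisfies $\tilde B\equiv\beta\tilde B_0$ modulo squares. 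Feeding $\beta\tilde B_0$ and the analogously produced $\gamma\tilde C_0$ back into $(1)$ then gives $(2)$.

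The main obstacle I anticipate is purely that $F_a$ need not be a field: when $a$ is a square in $F$ one has $F_a\cong F\times F$, and the Hilbert 90 step must be applied to this split étale quadratic $F$-algebra rather than to a quadratic field extension. This is exactly why the statement is phrased with $F_a$ and its norm rather than with $F[\sqrt a]$. The norm-$1$ assertion is nonetheless true in the split case and can be checked directly (if $\xi=(p,p^{-1})$ then $\xi=\sigma(\eta)/\eta$ with $\eta=(1,p)$), so the argument goes through uniformly; keeping track that the resulting indeterminacy $\sigma(w)/w$ reduces, modulo squares, to a genuine element $\Norm_{F_a/F}(w)$ of $F^\times$ is the one computation to perform with care.
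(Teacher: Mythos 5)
Your proof is correct and takes essentially the same route as the paper's: both reduce $(1)\Rightarrow(2)$ to comparing $\tilde B$ with $z_1\tilde B_0$ via Hilbert 90, writing their quotient as $\sigma(\lambda)/\lambda$ and extracting the scalar $\beta = z_1\Norm_{F_a/F}(\lambda)\in F^\times$ so that $\tilde B$ and $\beta\tilde B_0$ agree modulo squares. The only cosmetic difference is that the paper splits into cases (when $a$ is a square it simply takes $\beta = B/B_0\in F^\times$), whereas you treat $F_a$ uniformly as an \'etale quadratic algebra, supplying the split-case Hilbert 90 by hand.
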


Of course, by Theorem~\ref{thm-main-four-cup-products} these conditions are also equivalent to the vanishing of~$\langle a, b, c, d \rangle $, but the point of the Proposition is to show that it makes sense to start with any~$\tilde B_0, \tilde C_0$ and then look for the ``modifiers'' $\beta, \gamma $. 

\begin{proof}
It is plain that (2) implies (1), so here is the non-trivial part. Assume that~$\tilde B, \tilde C$ exist as in the first part. We claim that we can in fact find~$\beta, \gamma \in F$ and~$\lambda \in F[\sqrt a]$, $\mu \in F[\sqrt d]$ such that~$B = \beta B_0 \lambda ^{-2}$ and~$C = \gamma C_0 \mu^{-2}$, where~$B_0 \in F[\sqrt a]$ and~$C_0 \in F[\sqrt d]$ correspond to~$\tilde B_0, \tilde C_0$ respectively. Clearly this will give the result.

To prove the claim, we suppose first that~$a$ is not a square in the field~$F$, and write $\Norm_{F[\sqrt a]/F} (Bz_1^{-1}) = b = \Norm_{F[\sqrt a]/F}(B_0)$. Since we have $\Norm_{F[\sqrt a]/F}(Bz_1^{-1}B_0^{-1}) = 1$, by Hilbert 90 we have
\[ \frac{Bz_1^{-1}} {B_0} = \frac{\sigma (\lambda )} {\lambda }  \]
for some~$\lambda \in F[\sqrt a]$, where~$\sigma $ is the non-trivial element of~$\gal(F[\sqrt a]/ F)$. Rewrite this~$B = \beta B_0 \lambda ^{-2}$, with~$\beta = z_1 \lambda \sigma (\lambda ) \in F$, and we are done.

If on the other hand~$a$ is a square in~$F$, then~$B_0, B \in F^\times$, so we may put~$\beta = \frac{B} {B_0}$ and~$\lambda = 1$.

A similar argument works with~$C, C_0$.
\end{proof}

We give a series of examples. These will demonstrate how, in practice, one looks for~$\beta $ and~$\gamma $ rather than~$B$ and~$C$ directly. Logically speaking, only the trivial implication of the last Proposition is used, at this point (although knowing that the converse holds gives us confidence in the whole approach).

The elements~$a$ and~$d$ are always assumed not to be squares in~$F$, so we identify~$F_a$ and~$F[\sqrt a]$, as well as~$F_d$ and~$F[\sqrt d]$, and~$\tilde B_0 = B_0$, $\tilde C_0 = C_0$. As you have guessed, the characteristic is always~$\ne 2$. We will make some forward references to the results of the next section, which reduce the number of conditions to check.

\begin{ex}[The abaa case] \label{ex-abaa}  We show that when~$(a, b) = (a, a) = 0$, the Massey product~$\langle a, b, a, a \rangle$ is defined and vanishes, under the assumption that~$a \ne b$ modulo squares, and that~$a$ and~$b$ are not squares.

  Clearly we can find~$B_0$ and~$C_0$ as in the Proposition, by our assumption (and using Proposition~\ref{prop-D4-iff-cup-prop-iff-norm}, of course). We claim that~$B_0$ and~$C_0$ form a basis for~$F[\sqrt a]$ as an~$F$-vector space. Indeed, if we had~$\beta B_0 = \gamma C_0$ for~$\beta, \gamma \in F^\times$, then by taking norms to~$F$ we would find that~$b=a$ modulo squares, a contradiction. Therefore there must exist~$\beta, \gamma \in F$ such that
\[ \beta B_0 + \gamma C_0 = 1 \, .   \]
This implies~$(\beta B_0, \gamma C_0)_{F[\sqrt a]} = 0$. Moreover~$(\beta B_0, a)_{F[\sqrt a]} = (\beta B_0, 1)_{F[\sqrt a]} = 0$. By Lemma~\ref{lem-eq-scrE-iff-cup-product-a=d} below, the other cup-products vanish automatically (the reader will also enjoy looking for a direct argument). By Theorem~\ref{thm-main-four-cup-products}, the Massey product is defined and vanishes.
\end{ex}

\begin{ex}[The aaaa case] \label{ex-aaaa}
  Now we complete the discussion of the previous example, and turn to the case when~$a=b$ modulo squares (still assuming that~$a$ is not a square). We show that~$(a, a) = 0$ implies that $\langle a, a, a, a \rangle$ is defined and vanishes.

  Since~$(a, a)= 0$, we draw the existence of~$B_0$ with~$\Norm_{F[\sqrt a]/F}(B_0)= a$ from Proposition~\ref{prop-D4-iff-cup-prop-iff-norm}. It is always true that~$(a, -a) = 0$, so we have~$(a, -1) = 0$, implying that~$a$ is a sum of two squares in~$F$. We invoke the ``Norm Principle'' from~\cite{elmanlam}: from the fact that~$\Norm_{F[\sqrt a]/F}(B_0)$ is the product of two elements, namely~$a$ and~$1$, which are each the sum of two squares in~$F$, this principle implies that there exists~$\beta \in F^\times$ such that~$B= \beta B_0$ is the sum of two squares in~$F[\sqrt a]$. In turn, this means that~$(B, -1) = 0$ and so~$(B, B)= 0$.

  Obviously~$(B, a) = (B, 1) = 0$ in the cohomology of~$F[\sqrt a]$ since~$a$ is a square there, so Theorem~\ref{thm-main-four-cup-products} applies (with~$C=B$).

\end{ex}

\begin{ex}[The abad case] \label{ex-abad}
  One more example in the same style. We prove that~$\langle a, b, a, d \rangle$ is defined and vanishes, under the following assumptions: none of $a, b, d \in F^\times$ is a square, neither is~$ad$, and~$(a, b) = (a, d)= 0$.

  Pick~$B_0$ such that~$\Norm_{F[\sqrt a]/F} (B_0)= b$ and~$C_0$ such that~$\Norm_{F[\sqrt d]/F}(C_0) = a$. Put~$E = F[\sqrt a, \sqrt d]$. From the fact that~$\Norm_{E/F[\sqrt a]}(C / \sqrt a) = 1$, we draw {\em via} Hilbert 90 the existence of~$x \in E^\times$ such that~$Cx^2 = \Norm_{E/F[\sqrt a]}(x) \sqrt a$, and so in particular~$Cx^2 \in F[\sqrt a]$.

  Two cases can occur. First, assume that~$B_0$ and~$C_0x^2$ are linearly independent in~$F[\sqrt a]$ over~$F$. Then there exist~$\beta , \gamma \in F$ such that
\[ \beta B_0 + \gamma C_0 x^2 = 1 \, .   \]
We can see easily that~$\beta $ and~$\gamma $ are both non-zero, for supposing otherwise would lead us to conclude, upon taking norms, that~$b$ or~$a$ is a square in~$F$. Thus $$(\beta B_0, \gamma C_0 x^2)_E = (\beta B_0, \gamma C_0)_E = 0 \, . $$

Let us check that we can come to the same conclusion if we assume, alternatively, that~$B_0 = h C_0x^2$ for some~$h \in F^\times$. Indeed, put~$\beta = 1$ and~$\gamma = -h$ and we do have
\[ (\beta B_0 , \gamma C_0) = (B_0, -hC_0) = (B_0, -hC_0 x^2) = (B_0, -B_0) = 0 \, .   \]

In either case, if we put~$B= \beta B_0$ and~$C = \gamma C_0$, then~$(B, C)_E = 0$. The other hypotheses required to apply Theorem~\ref{thm-main-four-cup-products} are redundant here, from Lemma~\ref{lem-redundancies} below (essentially because of the projection formula).
\end{ex}

\section{Splitting varieties} \label{sec-splitting}

\subsection{The fundamental equation} \label{subsec-reformulation}

Proposition~\ref{prop-D4-iff-cup-prop-iff-norm} gives a necessary and sufficient condition for a cup-product to vanish in Galois cohomology, in terms of a simple polynomial equation. We shall see that the four cup-products of Theorem~\ref{thm-main-four-cup-products} are likewise controlled by a single polynomial equation, taking place in a certain finite étale $F$-algebra, namely~
\[ \E= F[X, Y]/(X^2 - a, Y^2 - d) \cong F_a \otimes_F F_d. \]
More precisely, we establish the following.

\begin{prop} \label{prop-equation-iff-four-cup-products}
Let~$a, b, c, d \in F^\times$. Let~$\tilde B \in F_a \subset \E$ satisfy~$\Norm_{F_a/F}(\tilde B)=b z_1^2$, and let~$\tilde C \in F_d \subset \E$ satisfy~$\Norm_{F_d/F}(\tilde C)= cz_2^2$, for some~$z_1, z_2 \in F^\times$. Let~$B \in F[\sqrt a]$ and~$C \in F[\sqrt d]$ be the corresponding elements. Then the equation
\[ u^2 - \tilde B v^2 = \tilde C  \]
has a solution with~$u, v \in \E$ if and only if we have simultaneously~$(B, C)_{F[\sqrt a, \sqrt d]}=0$, $ (B, c)_{F[\sqrt a]} =0$,  $(b, C)_{F[\sqrt d]}=0$,  $ (b, c)_F = 0$. Alternatively, the same holds with the equation
\[ u^2 \tilde B + v^2 \tilde C = 1 \, .   \]
\end{prop}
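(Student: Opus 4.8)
The plan is to route everything through the cup product $(\tilde B, \tilde C)_\E \in \H^2(\E, \f_2)$, where for the decomposition $\E = \prod_i L_i$ into factor fields we set $\H^n(\E, \f_2) := \prod_i \H^n(L_i, \f_2)$. I would first show that each of the two displayed equations is solvable in $\E$ if and only if $(\tilde B, \tilde C)_\E = 0$, and then that this condition is equivalent to the simultaneous vanishing of the four cup products. For the first part, note that since $\Norm_{F_a/F}(\tilde B) = b z_1^2 \ne 0$ and $\Norm_{F_d/F}(\tilde C) = c z_2^2 \ne 0$, both $\tilde B$ and $\tilde C$ are units in $\E$, hence units in each $L_i$, and an equation is solvable over $\E$ iff it is solvable over each $L_i$. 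Over a field, the equation $u^2 - \tilde B v^2 = \tilde C$ says that $\tilde C$ is a norm from $L_i(\sqrt{\tilde B})$, which by the equivalence (1)$\iff$(3) of Proposition~\ref{prop-D4-iff-cup-prop-iff-norm} applied to the pair $(\tilde B, \tilde C)$ over $L_i$ is exactly $(\tilde B, \tilde C)_{L_i} = 0$; the companion equation $u^2 \tilde B + v^2 \tilde C = 1$ yields the same condition, since when $v \ne 0$ one has $v^2 \tilde C = \Norm_{L_i(\sqrt{\tilde B})/L_i}(1 + u\sqrt{\tilde B})$, while $v = 0$ forces $\tilde B$ to be a square. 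Thus both equations are solvable iff $(\tilde B, \tilde C)_\E = 0$.

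The key reduction is the following observation. Because $F_a$ and $F_d$ are each either a quadratic field or split, $\E = F_a \otimes_F F_d$ is a product of copies of $E = F[\sqrt a, \sqrt d]$, say $\E \cong E^{\,r}$ with $r = 4/[E:F]$. Each factor projection $\E \to E$ sends $X \mapsto \pm\sqrt a$ and $Y \mapsto \pm\sqrt d$, hence carries $\tilde B$ to $B = x_1 + y_1\sqrt a$ or to its conjugate $\bar B = x_1 - y_1\sqrt a$, and $\tilde C$ to $C$ or to $\bar C$. Since $\Norm_{F_a/F}(\tilde B) = b z_1^2$ gives $\bar B \equiv bB$ and likewise $\bar C \equiv cC$ modulo squares, bilinearity shows that the corresponding component of $(\tilde B, \tilde C)_\E$ is one of
\[ (B,C)_E, \quad (B,C)_E + (b,C)_E, \quad (B,C)_E + (B,c)_E, \quad (B,C)_E + (b,C)_E + (B,c)_E + (b,c)_E. \]
Each summand is the restriction to $E$ of one of the four cup products $(B,C)_E$, $(B,c)_{F[\sqrt a]}$, $(b,C)_{F[\sqrt d]}$, $(b,c)_F$. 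Hence, if the four cup products vanish, then every component of $(\tilde B, \tilde C)_\E$ vanishes, so the equations are solvable; this gives the implication from the four conditions to solvability.

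For the converse, assume $(\tilde B, \tilde C)_\E = 0$. Restricting along the distinguished projection $X \mapsto \sqrt a,\ Y \mapsto \sqrt d$ gives $(B,C)_E = 0$. For the remaining three I would use corestriction: by the projection formula, $\cores_{\E/F_a}\big((\tilde B, \tilde C)_\E\big) = \chi_{\tilde B} \cup \chi_{\Norm_{\E/F_a}(\tilde C)} = (\tilde B, c)_{F_a}$, since $\Norm_{\E/F_a}(\tilde C) = \Norm_{F_d/F}(\tilde C) = c z_2^2$; pushing forward through $F_a \to F[\sqrt a]$ gives $(B,c)_{F[\sqrt a]} = 0$, and the symmetric computation with $\cores_{\E/F_d}$ gives $(b,C)_{F[\sqrt d]} = 0$. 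Finally $\cores_{F_a/F}\big((\tilde B, c)_{F_a}\big) = (\Norm_{F_a/F}(\tilde B), c)_F = (b,c)_F = 0$, completing the four conditions.

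The main obstacle is the bookkeeping in the key reduction in the degenerate cases where one of $a$, $d$, $ad$ is a square, so that $\E$ fails to be a field: there the four conditions are no longer literally the four components of $(\tilde B, \tilde C)_\E$, and one must keep track of how the conjugates $\bar B \equiv bB$ and $\bar C \equiv cC$ reintroduce the factors $b$ and $c$. The uniform isomorphism $\E \cong E^{\,r}$ together with these two congruences is precisely what systematizes this and removes the need for a separate case-by-case analysis beyond recording which sign pattern produces which combination of the four cup products.
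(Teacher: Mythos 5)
Your proof is correct, but it is organized quite differently from the paper's. The paper proceeds case by case according to whether $a$, $d$, $ad$ are squares (Lemmas \ref{lem-redundancies}, \ref{lem-eq-scrE-iff-cup-product-a-d-squares}, \ref{lem-eq-scrE-iff-cup-product-a-square} and \ref{lem-eq-scrE-iff-cup-product-a=d}): in the generic case it applies Proposition \ref{prop-D4-iff-cup-prop-iff-norm} over the field $\E = E$ and deduces the remaining three conditions from $(B,C)_E=0$ by the projection formula, while in the degenerate cases it writes out the components of the equation under an explicit decomposition of $\E$ and manipulates identities such as $(b,C)=(BB',C)=(B,C)+(B',C)$. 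You instead route everything through the single class $(\tilde B,\tilde C)_\E$, observe uniformly that its components under $\E \cong E^{\,r}$ are sums of restrictions of the four cup products (the same identities, via $\bar B \equiv bB$ and $\bar C \equiv cC$ modulo squares), and recover the four conditions from $(\tilde B,\tilde C)_\E=0$ by corestriction. What your route buys is the elimination of the case analysis; what it costs is that you must define $\cores_{\E/F_a}$, $\cores_{\E/F_d}$, $\cores_{F_a/F}$ and justify the projection formula for \'etale algebras that need not be fields --- this is routine (componentwise it is either the usual field case or the split case, where corestriction is the sum of components and the norm is the product), but it is precisely what hides in your phrase ``pushing forward'', so it should be spelled out. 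What the paper's case-by-case route buys in exchange is sharper per-case statements --- e.g.\ in the $ad$-square case only two of the four conditions are needed (Lemma \ref{lem-eq-scrE-iff-cup-product-a=d}) --- and these refinements are used elsewhere, for instance in Example \ref{ex-abaa}. One last remark: for the companion equation $u^2\tilde B + v^2\tilde C = 1$ you only verify that solvability forces $(\tilde B,\tilde C)_{L_i}=0$; the converse (a split quaternion algebra yields an affine solution even when the obvious conic point has vanishing last coordinate) is the ``elementary computation'' the paper likewise leaves to the reader, so this is not a gap.
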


The proof will be done in a case-by-case manner (each time getting a slightly more precise statement than that in the Proposition). A quick remark about the equivalence of the two equations, though: it is not a completely general fact, as for example the equation $u^2  + 2v^2  = 1$ has four solutions over~$\z/4\z$ while~$u^2 - v^2 = 2$ has no solution over the same ring. When working over a field of characteristic different from~$ 2$ however, the two problems are equivalent, as elementary computations reveal; since~$\E$ is a direct sum of such fields, we may indeed use either equation. The second is symmetric in~$\tilde B$ and~$\tilde C$, and will be used in the sequel, but the proofs in the remainder of this section will be dealing with the first.

In the ``generic case'' first, that is when~$a$ and~$d$ are linearly independent in~$\li F$, we can and we do identify~$\E$ with~$E= F[\sqrt a, \sqrt d]$. Moreover, we have the following simple situation.

\begin{lem} \label{lem-redundancies}
  Let~$\tilde B, B, \tilde C, C$ be as in the Proposition. Suppose that   $(B,C)_E = 0$. \pierre{right?}

  \begin{enumerate}
  \item[(i)] If neither~$d$ nor~$ad$ is a square in~$F$, then we have~$(B,c)_{F[\sqrt a]} = 0$. If neither~$a$ nor~$ad$ is a square in~$F$, we have~$(b, C)_{F[\sqrt d]} = 0$.
  \item[(ii)] If $(B,c)_{F[\sqrt a]} = 0$ and~$a$ is not a square in~$F$, we have~$(b, c)_F = 0$. Likewise, if $(b, C)_{F[\sqrt d]} = 0$ and~$d$ is not a square in~$F$, we also conclude that~$(b,c)_F = 0$.
  \end{enumerate}

  In particular, when~$[E:F]=4$, the four cup-products from Theorem~\ref{thm-main-four-cup-products} vanish precisely when there are~$u, v \in E$ satisfying
\[ u^2 - v^2 B = C \, .   \]
\end{lem}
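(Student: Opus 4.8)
The whole statement rests on the projection formula in Galois cohomology, together with the fact used already in the proof of Proposition~\ref{prop-D4-iff-cup-prop-iff-norm} that, under the Kummer isomorphism, corestriction along a quadratic extension $K'/K$ computes the norm: $\cores_{K'/K}(\chi_N) = \chi_{\Norm_{K'/K}(N)}$. Concretely, for $x \in \H^1(K)$ and $y \in \H^1(K')$ one has $\cores_{K'/K}(\operatorname{res}_{K'/K}(x) \cup y) = x \cup \cores_{K'/K}(y)$, and the plan is to apply this identity once for each of the relative quadratic extensions in sight. I would record at the outset that, since $\tilde B$ and $\tilde C$ exist, $\Norm_{F[\sqrt a]/F}(B) = b z_1^2$ and $\Norm_{F[\sqrt d]/F}(C) = c z_2^2$, that is, these norms equal $b$ resp.\ $c$ modulo squares, which is all that will be needed.

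For part (i), set $L = F[\sqrt a]$ and note first that ``neither $d$ nor $ad$ is a square in $F$'' is exactly the condition that $d$ is not a square in $L$, so that $E = L[\sqrt d]$ is a genuine quadratic extension of $L$. Since $C$ lies in the subfield $F[\sqrt d] \subset E$, the nontrivial automorphism of $E/L$ restricts to conjugation on $F[\sqrt d]$, whence $\Norm_{E/L}(C) = \Norm_{F[\sqrt d]/F}(C) = c z_2^2$, which is $c$ modulo squares in $L$. Writing $(B,C)_E = \operatorname{res}_{E/L}(\chi_B) \cup \chi_C$ (legitimate because $B \in L$) and pushing down by $\cores_{E/L}$, the projection formula gives $\cores_{E/L}((B,C)_E) = \chi_B \cup \chi_c = (B,c)_L$. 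As the left-hand side vanishes by hypothesis, $(B,c)_{F[\sqrt a]} = 0$. The second assertion of (i) is the mirror image, obtained by exchanging the roles of $(a,b,B)$ and $(d,c,C)$ and working with the quadratic extension $E/F[\sqrt d]$.

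For part (ii), $a$ is not a square, so $L = F[\sqrt a]/F$ is quadratic and $\cores_{L/F}(\chi_B) = \chi_{b z_1^2} = \chi_b$. Viewing $(B,c)_L = \chi_B \cup \operatorname{res}_{L/F}(\chi_c)$, the projection formula yields $\cores_{L/F}((B,c)_L) = \chi_b \cup \chi_c = (b,c)_F$, so $(B,c)_{F[\sqrt a]} = 0$ forces $(b,c)_F = 0$; the other half is again symmetric. For the final ``in particular'' clause, I would observe that $[E:F] = 4$ forces $a$, $d$ and $ad$ all to be non-squares in $F$, so parts (i) and (ii) apply with no further hypotheses: from $(B,C)_E = 0$, part (i) gives $(B,c)_{F[\sqrt a]} = (b,C)_{F[\sqrt d]} = 0$ and then part (ii) gives $(b,c)_F = 0$. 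Thus all four cup-products vanish as soon as the single one does (the converse being trivial), and since $\tilde B = B$, $\tilde C = C$ under the identification $\E = E$, Proposition~\ref{prop-equation-iff-four-cup-products} equates the vanishing of the four cup-products with the solvability of $u^2 - v^2 B = C$ in $E$.

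I expect the only delicate point to be purely bookkeeping: correctly translating the arithmetic hypotheses on $d$ and $ad$ into the statement that $E/F[\sqrt a]$ is a quadratic extension, and computing the relative norm $\Norm_{E/F[\sqrt a]}(C)$ for an element $C$ that happens to lie in the complementary subfield $F[\sqrt d]$. Beyond this careful tracking of which field each class lives in, the argument is just a threefold application of the projection formula and carries no conceptual obstacle.
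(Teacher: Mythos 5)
Your parts (i) and (ii) are correct and are essentially the paper's own argument: the paper works out the mirror half of (i) explicitly (showing $\Norm_{E/F[\sqrt d]}(B) = bz_1^2$ when neither $a$ nor $ad$ is a square in $F$, then applying the projection formula to get $\cores\left((B,C)_E\right) = (b,C)_{F[\sqrt d]}$) and dismisses the other half of (i) and all of (ii) as ``similar'' -- which is exactly what you supply. Your bookkeeping is also sound: the hypothesis that neither $d$ nor $ad$ is a square in $F$ is indeed equivalent to $d$ not being a square in $F[\sqrt a]$, and the relative norm of $C \in F[\sqrt d]$ down to $F[\sqrt a]$ is computed correctly, since the nontrivial automorphism of $E/F[\sqrt a]$ restricts to the conjugation of $F[\sqrt d]/F$.

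However, the final step of your proof is circular. Lemma~\ref{lem-redundancies} is itself one of the case-by-case ingredients in the paper's proof of Proposition~\ref{prop-equation-iff-four-cup-products}: its ``in particular'' clause \emph{is} that Proposition in the case $[E:F]=4$, where $\E$ is identified with $E$ and $\tilde B, \tilde C$ with $B, C$ (the remaining cases are handled by the three subsequent lemmas). So you cannot quote Proposition~\ref{prop-equation-iff-four-cup-products} to establish the ``in particular'' clause. The repair is exactly what the paper does: invoke Proposition~\ref{prop-D4-iff-cup-prop-iff-norm}, applied over the field $E$ with $B$ and $C$ in the roles of $a$ and $b$. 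It says that $x^2 - By^2 = Cz^2$ has a solution in $E$ with $z \neq 0$ if and only if $(B,C)_E = 0$, and dividing by $z^2$ puts the equation in the form $u^2 - v^2B = C$. Combined with your (correct) deduction that $(B,C)_E = 0$ forces the other three cup-products to vanish when $[E:F] = 4$, this closes the argument without any appeal to the Proposition being proved.
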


\begin{proof}
  If we suppose that~$a$ is not a square in~$F$, then $\Norm_{F[\sqrt a]/F}(B)= bz_1^2$; if~$ad$ is not a square either, than we can also write~$\Norm_{E/F[\sqrt d]}(B) = bz_1^2$. Thus we can use the projection formula from group cohomology, asserting that $$\cores( (B, C)_{E} ) = (b, C)_{F[\sqrt d]} \, , $$ where~$\cores \colon \h^2(E, \f_2) \to \h^2(F[\sqrt d], \f_2)$ is the corestriction. Thus $(B,C)_E = 0$ does imply~$(b, C)_{F[\sqrt d]} = 0$. The other case is treated similarly.

  One also proves (ii) using the projection formula.

For the last statement, we invoke Proposition~\ref{prop-D4-iff-cup-prop-iff-norm} which states that the proposed equation has a solution if and only if~$(B,C)_E= 0$. By the first part, this implies that the other three cup-products also vanish.
\end{proof}

Now suppose that~$a$ and~$d$ are both squares in~$F$. Then we have four~$F$-homomorphisms $p_i \colon \E \to F$, for~$i= 1,2,3, 4$, mapping~$X$ to~$\pm \sqrt a$ and~$Y$ to~$\pm \sqrt d$. Together they induce an isomorphism~$\E \cong F \times F \times F \times F$, by the Chinese Remainder Theorem. (Note that the map~$F \to \E$ which turns~$\E$ naturally into an~$F$-algebra is the diagonal embedding of~$F$ in~$F^4$, under this isomorphism.)

\begin{lem} \label{lem-eq-scrE-iff-cup-product-a-d-squares}
  Suppose~$a$ and~$d$ are both squares in~$F$. Let~$\tilde B, B, \tilde C, C$ be as in the Proposition. Then
\[ u^2 - \tilde B v^2 = \tilde C  \]
has a solution with~$u, v \in \E$ if and only if we have simultaneously~$(B, C)_F = (B, c)_F = (b, C)_F = (b, c)_F = 0$.
\end{lem}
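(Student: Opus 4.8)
The plan is to exploit the fact that, when $a$ and $d$ are both squares, the algebra $\E$ splits completely via the Chinese Remainder Theorem, so the single equation over $\E$ breaks into four scalar equations over $F$, each of which is governed by a cup-product through Proposition~\ref{prop-D4-iff-cup-prop-iff-norm}.

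First I would write $\tilde B = x_1 + y_1 X$ and $\tilde C = x_2 + y_2 Y$, so that $B = x_1 + y_1\sqrt a$ and $C = x_2 + y_2\sqrt d$, and introduce the conjugates $B' = x_1 - y_1\sqrt a$ and $C' = x_2 - y_2\sqrt d$. From $B B' = \Norm_{F_a/F}(\tilde B) = b z_1^2$ and $C C' = c z_2^2$ one reads off at once that $B, B', C, C' \in F^\times$ (nondegeneracy is free), and that $B' \equiv bB$ and $C' \equiv cC$ modulo squares in $F^\times$. Ordering the four projections as $p_1\colon(X,Y)\mapsto(\sqrt a, \sqrt d)$, $p_2\colon(\sqrt a, -\sqrt d)$, $p_3\colon(-\sqrt a, \sqrt d)$, $p_4\colon(-\sqrt a,-\sqrt d)$, the images of $\tilde B$ under $(p_1,\dots,p_4)$ are $B, B, B', B'$ and those of $\tilde C$ are $C, C', C, C'$.

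Writing $u = (u_i)$ and $v = (v_i)$ under the isomorphism $\E \cong F^4$, the equation $u^2 - \tilde B v^2 = \tilde C$ becomes the system $u_1^2 - B v_1^2 = C$, $u_2^2 - B v_2^2 = C'$, $u_3^2 - B' v_3^2 = C$, $u_4^2 - B' v_4^2 = C'$, and is solvable over $\E$ exactly when all four components are solvable over $F$. Now each scalar equation $u^2 - \lambda v^2 = \mu$ with $\lambda, \mu \in F^\times$ is solvable over $F$ if and only if $(\lambda, \mu)_F = 0$: one direction takes $z=1$ in condition~(3) of Proposition~\ref{prop-D4-iff-cup-prop-iff-norm}, and the other divides a solution of $x^2 - \lambda y^2 = \mu z^2$ (with $z \neq 0$) by $z^2$. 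Hence the system is equivalent to the four conditions $(B,C)_F = (B,C')_F = (B',C)_F = (B',C')_F = 0$.

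It remains to match these with the asserted conditions, and this bookkeeping is the only delicate point. Using $B' \equiv bB$ and $C' \equiv cC$ modulo squares together with bilinearity of the cup-product over $\F_2$, one gets $(B,C')_F = (B,C)_F + (B,c)_F$, $(B',C)_F = (B,C)_F + (b,C)_F$, and $(B',C')_F = (B,C)_F + (B,c)_F + (b,C)_F + (b,c)_F$. The resulting change of basis from $\{(B,C),(B,c),(b,C),(b,c)\}$ to $\{(B,C),(B,C'),(B',C),(B',C')\}$ is unitriangular, hence invertible over $\F_2$, so the simultaneous vanishing of the first four is equivalent to the simultaneous vanishing of the second four. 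This yields the stated equivalence. The subtlety to watch is precisely that the conjugates are congruent to $bB$ and $cC$ — not to $b$ and $c$ — modulo squares, so the passage to $(B,C)_F, (B,c)_F, (b,C)_F, (b,c)_F$ must go through the invertible unitriangular relation above rather than a naive term-by-term identification.
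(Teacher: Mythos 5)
Your proof is correct and follows essentially the same route as the paper's: split $\E \cong F^4$ via the four projections, reduce to four scalar equations each governed by a cup-product through Proposition~\ref{prop-D4-iff-cup-prop-iff-norm}, and then pass between the two sets of four classes by $\F_2$-linear bookkeeping using $BB'=bz_1^2$, $CC'=cz_2^2$. Your explicit unitriangular change of basis merely spells out the converse direction that the paper dismisses with ``one can also work backwards, clearly.''
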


\begin{proof}
  Applying~$p_i$ with~$i= 1, 2, 3, 4$, the equation becomes equivalent to the four equations
\[ u_i^2 - p_i(\tilde B) v_i^2  = p_i(\tilde C)  \]
with unknowns~$u_i, v_i \in F$. This is possible if and only if~$(p_i(\tilde B), p_i(\tilde C)) = 0$ for~$i= 1, 2, 3, 4$. (Note that the calculations to follow will establish that~$p_i(B)\ne 0$, $p_i(C) \ne 0$ for all~$i$, so that the cup-products make sense.)

We need some notation. Let~$B= x + y \sqrt a$ and~$C= x' + y'\sqrt d$, and introduce~$B' = x - y \sqrt a$ and~$C' = x' - y' \sqrt d$, so that~$BB'=bz_1^2\ne 0$, and likewise~$CC' = cz_2^2\ne 0$. To fix our ideas, we assume that the numbering of the homomorphisms~$p_i$ has been made such that
\[ p_1(\tilde B) = p_3(\tilde B) = B \, , \qquad p_2(\tilde B) = p_4(\tilde B) = B' \, ,   \]
\[ p_1(\tilde C) = p_2(\tilde C) = C \, , \qquad p_3(\tilde C) = p_4(\tilde C) = C' \, .   \]
The four cup-products we consider are then~$(B, C)$, $(B', C)$, $(B, C')$ and~$(B', C')$ for~$i= 1, 2, 3, 4$ respectively, all in the cohomology of~$F$. However
\[ (b, C ) = (BB', C) = (B, C) + (B', C) = 0 \, ,   \]
and similarly we draw~$(B, c) = 0$ and~$(b, c)= 0$. One can also work backwards, clearly.
\end{proof}

When~$a$ is a square in~$F$, but~$d$ is not, we view~$\E$ as~$F[\sqrt d][X]/(X^2 - a) \cong F[\sqrt d] \times F[\sqrt d]$ with its two~$F[\sqrt d]$-homomorphisms~$p_1, p_2 \colon \E \to F[\sqrt d]$. The elements~$Y$ and~$\sqrt d$ are identified. A reasoning similar to the above yields:

\begin{lem} \label{lem-eq-scrE-iff-cup-product-a-square}
Suppose that~$a$ is a square in~$F$, and that~$d$ is not. Let~$\tilde B, B, \tilde C, C$ be as in the Proposition. Then the equation
\[ u^2 - \tilde B v^2 = \tilde C  \]
has a solution with~$u, v \in \E$ if and only if we have simultaneously~$(B, C)_{F[\sqrt d]} =  (b, C)_{F[\sqrt d]} = 0$. When this is the case, we have automatically~$(B, c)_F= (b, c)_F = 0$ from Lemma~\ref{lem-redundancies}. \hfill $\square$
\end{lem}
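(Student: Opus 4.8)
The plan is to imitate the decomposition used in Lemma~\ref{lem-eq-scrE-iff-cup-product-a-d-squares}, but splitting off only the factor coming from the square $a$. Writing $a = \alpha^2$ with $\alpha \in F^\times$, the polynomial $X^2 - a = (X - \alpha)(X + \alpha)$ factors over $F$, so the Chinese Remainder Theorem gives an isomorphism $\E \cong F[\sqrt d] \times F[\sqrt d]$ realised by the two $F[\sqrt d]$-algebra homomorphisms $p_1, p_2 \colon \E \to F[\sqrt d]$ with $p_1(X) = \alpha$ and $p_2(X) = -\alpha$ (both fixing $Y = \sqrt d$), exactly as in the lemma's preamble. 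First I would record the images of $\tilde B$ and $\tilde C$: writing $\tilde B = x + yX$, one has $p_1(\tilde B) = x + y\alpha = B$ and $p_2(\tilde B) = x - y\alpha =: B'$, where $B \in F[\sqrt a] = F$ is the element of the statement; and since $\tilde C \in F_d \subset \E$ lies in the copy of $F[\sqrt d]$ fixed by $p_1$ and $p_2$, both projections send it to $C$.

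Applying $p_1$ and $p_2$, the equation $u^2 - \tilde B v^2 = \tilde C$ over $\E$ becomes the pair of simultaneous equations
\[ u_1^2 - B v_1^2 = C, \qquad u_2^2 - B' v_2^2 = C \]
over the field $F[\sqrt d]$, with $B, B', C$ all nonzero (as $B B' = \Norm_{F_a/F}(\tilde B) = b z_1^2$ and $\Norm_{F_d/F}(\tilde C) = c z_2^2$ are nonzero). By Proposition~\ref{prop-D4-iff-cup-prop-iff-norm} --- the factor $z^2$ being harmless over a field, after dividing through --- the first equation is solvable iff $(B, C)_{F[\sqrt d]} = 0$ and the second iff $(B', C)_{F[\sqrt d]} = 0$. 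The key reduction is that $B B' = b z_1^2$ forces $B' \equiv bB$ modulo squares in $F[\sqrt d]$, so by bilinearity $(B', C)_{F[\sqrt d]} = (b, C)_{F[\sqrt d]} + (B, C)_{F[\sqrt d]}$. Hence both equations are solvable precisely when $(B, C)_{F[\sqrt d]} = 0$ and $(b, C)_{F[\sqrt d]} = 0$, which is the asserted criterion.

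For the final sentence I would argue exactly as in Lemma~\ref{lem-redundancies}(ii): since $d$ is not a square, $F[\sqrt d]/F$ is a degree-$2$ extension, and corestricting the two vanishing cup-products along it, together with $\Norm_{F[\sqrt d]/F}(C) = c z_2^2$ and the projection formula, yields $\cores (B, C)_{F[\sqrt d]} = (B, c)_F = 0$ and $\cores (b, C)_{F[\sqrt d]} = (b, c)_F = 0$; here I use $F[\sqrt a] = F$ so that $(B, c)_{F[\sqrt a]} = (B, c)_F$. The only genuinely delicate point, which I would verify with care, is the bookkeeping under the CRT isomorphism --- specifically that both components of $\tilde C$ equal $C$ while the two components of $\tilde B$ are the honest conjugates $B$ and $B'$; everything else is a formal consequence of Proposition~\ref{prop-D4-iff-cup-prop-iff-norm}, bilinearity of the cup-product, and the projection formula, so I do not expect any serious obstacle.
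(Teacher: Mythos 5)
Your proposal is correct and coincides with the paper's intended argument: the paper sets up exactly your decomposition $\E \cong F[\sqrt d] \times F[\sqrt d]$ via the two $F[\sqrt d]$-homomorphisms $p_1, p_2$ (identifying $Y$ with $\sqrt d$) immediately before the statement, and then declares the proof to be ``a reasoning similar to'' that of Lemma~\ref{lem-eq-scrE-iff-cup-product-a-d-squares} --- namely, applying $p_1, p_2$ to get two conic equations over $F[\sqrt d]$, invoking Proposition~\ref{prop-D4-iff-cup-prop-iff-norm} for each, and using $BB' = bz_1^2$ together with bilinearity to replace $(B',C)$ by $(b,C)+(B,C)$. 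Your handling of the $z^2$ factor, the nonvanishing of $B, B', C$, and the final redundancy via the projection formula matches the paper's treatment, so there is nothing of substance to add.
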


Of course a similar result holds with the roles of~$a$ and~$d$ exchanged.

Finally we turn to the case when neither~$a$ nor~$d$ is a square in~$F$, but~$ad$ is. This is in fact similar to the previous case, except that we now have a choice. Namely, we can produce two~$F[\sqrt d]$-homomorphisms~$p_1, p_2 \colon \E \to F[\sqrt d]$ giving an isomorphism~$\E \cong F[\sqrt d] \times F[\sqrt d]$, identifying~$\sqrt d$ with~$Y$ all along; or, we can alternatively find two~$F[\sqrt a]$-homomorphisms~$p_1', p_2' \colon \E \to F[\sqrt a]$, giving an isomorphism~$\E \cong F[\sqrt a] \times F[\sqrt a]$, identifying~$X$ with~$\sqrt a$ all along. These two isomorphisms are distinct, even if~$a=d$. Using one and then the other, we get:

\begin{lem} \label{lem-eq-scrE-iff-cup-product-a=d}
  Suppose that neither~$a$ nor~$d$ is a square in~$F$, but that~$ad$ is. Let~$\tilde B, B, \tilde C, C$ be as in the Proposition. Then the equation
\[ u^2 - \tilde B v^2 = \tilde C  \]
has a solution with~$u, v \in \E$ if and only if we have simultaneously~$(B, C)_{F[\sqrt d]} =  (b, C)_{F[\sqrt d]} = 0$, if and only if we have simultaneously~$(B, C)_{F[\sqrt a]} =  (B, c)_{F[\sqrt a]} = 0$. When this is the case, we have automatically~$(b, c)_F = 0$ from Lemma~\ref{lem-redundancies}. \hfill $\square$
\end{lem}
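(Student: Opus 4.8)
The plan is to mimic the case-splitting technique used in Lemmas~\ref{lem-eq-scrE-iff-cup-product-a-d-squares}--\ref{lem-eq-scrE-iff-cup-product-a-square}, exploiting the fact that here $\E$ splits as a product of two copies of a field in \emph{two different ways}. First I would record that, since $ad$ is a square, one has $F[\sqrt a] = F[\sqrt d]$: writing $ad = e^2$ gives $\sqrt a = e/\sqrt d \in F[\sqrt d]$, and symmetrically $\sqrt d \in F[\sqrt a]$. Viewing $\E = F_d[X]/(X^2-a)$ as an algebra over the field $F_d = F[\sqrt d]$ (so that $Y$ is identified with $\sqrt d$), the element $a = e^2/d$ becomes a square in $F[\sqrt d]$, hence $X^2 - a$ factors and the Chinese Remainder Theorem yields two $F[\sqrt d]$-homomorphisms $p_1, p_2 \colon \E \to F[\sqrt d]$ with $p_1(X) = \sqrt a$, $p_2(X) = -\sqrt a$, both sending $Y \mapsto \sqrt d$. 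Writing $\tilde B = x + yX$ and $\tilde C = x' + y'Y$, this gives $p_1(\tilde B) = B$, $p_2(\tilde B) = B'$ (the conjugate $x - y\sqrt a$), and $p_1(\tilde C) = p_2(\tilde C) = C$.

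Then I would argue exactly as before: applying $p_1 \times p_2$ turns $u^2 - \tilde B v^2 = \tilde C$ into the two equations $u_i^2 - p_i(\tilde B)\, v_i^2 = C$ over $F[\sqrt d]$, which by Proposition~\ref{prop-D4-iff-cup-prop-iff-norm} are solvable precisely when $(B, C)_{F[\sqrt d]} = (B', C)_{F[\sqrt d]} = 0$. Since $BB' = \Norm_{F[\sqrt a]/F}(B) = b z_1^2$ is $b$ modulo squares, bilinearity gives $(b, C)_{F[\sqrt d]} = (B, C)_{F[\sqrt d]} + (B', C)_{F[\sqrt d]}$, so the pair of conditions is equivalent to $(B, C)_{F[\sqrt d]} = (b, C)_{F[\sqrt d]} = 0$. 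Running the identical argument through the \emph{other} splitting $\E \cong F[\sqrt a] \times F[\sqrt a]$ --- obtained from the two $F[\sqrt a]$-homomorphisms $p_1', p_2'$ fixing $X \mapsto \sqrt a$ and sending $Y \mapsto \pm\sqrt d$, for which $p_i'(\tilde B) = B$ while $\{p_1'(\tilde C), p_2'(\tilde C)\} = \{C, C'\}$ --- and using $CC' = \Norm_{F[\sqrt d]/F}(C) = c z_2^2$, gives the equivalence with $(B, C)_{F[\sqrt a]} = (B, c)_{F[\sqrt a]} = 0$.

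Finally, the claim $(b,c)_F = 0$ follows from Lemma~\ref{lem-redundancies}(ii): since $a$ is not a square and we have just shown $(B, c)_{F[\sqrt a]} = 0$, the projection formula $\cores\big((B, c)_{F[\sqrt a]}\big) = (\Norm_{F[\sqrt a]/F}(B), c)_F = (b, c)_F$ forces $(b,c)_F = 0$. There is no genuine obstacle here; the only point requiring care is the bookkeeping, since $F[\sqrt a]$ and $F[\sqrt d]$ are now the \emph{same} field, so the two splittings produce conditions over one and the same field that must be reconciled through the two distinct norm relations $BB' \equiv b$ and $CC' \equiv c$ modulo squares.
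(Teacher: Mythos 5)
Your proof is correct and is exactly the argument the paper intends: the paper proves this lemma by invoking the two distinct splittings $\E \cong F[\sqrt d] \times F[\sqrt d]$ and $\E \cong F[\sqrt a] \times F[\sqrt a]$ and ``using one and then the other,'' with the component-wise reduction via Proposition~\ref{prop-D4-iff-cup-prop-iff-norm} and the bilinearity/norm bookkeeping carried out just as in the proof of Lemma~\ref{lem-eq-scrE-iff-cup-product-a-d-squares}. Your write-up simply makes explicit the details the paper leaves to the reader, including the final appeal to Lemma~\ref{lem-redundancies}(ii) for $(b,c)_F = 0$.
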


This concludes the proof of the Proposition. Given that the existence of~$\tilde B$ and~$\tilde C$ is itself controlled by an simple polynomial equation, as in Proposition~\ref{prop-D4-iff-cup-prop-iff-norm}, the situation is now entirely rewritten in terms of the existence of a rational point on an algebraic variety.

\subsection{Splitting varieties} \label{subsec-splitting-varieties}

We proceed to translate our results into the language of algebraic geometry.

As ever, let~$F$ be a field of characteristic not~$2$, let~$a, b, c, d \in F^\times$, and put~$\E= F[X, Y] / (X^2 - a, Y^2 - d)$. Together, Theorem~\ref{thm-main-four-cup-products},
Proposition~\ref{prop-D4-iff-cup-prop-iff-norm} and  Proposition~\ref{prop-equation-iff-four-cup-products}  show that the vanishing of the Massey product $\langle a, b, c, d \rangle$ is equivalent to the existence of a solution to the following system of equations, with unknowns~$x_1, y_1, z_1, x_2, y_2, y_3 \in F$, $u, v \in \E$ :
\begin{enumerate}
\item $x_1^2 - a y_1^2 = bz_1^2$,
\item $x_2^2 - dy_2^2 = cz_2^2$,
  \item $u^2\tilde B + v^2\tilde C = 1$, where~$\tilde B = x_1 + y_1 X \in \E$ and~$\tilde C= x_2 + y_2 Y \in \E$.
\end{enumerate}
There is also the condition~$z_1 \ne 0$, $z_2 \ne 0$. Here~$u = u_1 + u_2 X + u_3 Y + u_4 XY$, and likewise for~$v$, so that equation (3) can be written as four equations over~$F$ (carrying out the expansion in practice does not seem to clarify things).

For technical reasons, related to Proposition~\ref{prop-modifiers}, we change coordinates and work with the equations:
\begin{enumerate}
\item[(i)] $x_1^2 - a y_1^2 = b$,
\item[(ii)] $x_2^2 - dy_2^2 = c$,
  \item[(iii)] $u^2 \beta \tilde B + v^2 \gamma \tilde C = 1$, where~$\tilde B = x_1 + y_1 X \in \E$ and~$\tilde C= x_2 + y_2 Y \in \E$.
\end{enumerate}
Here~$\beta, \gamma \in F^\times$ are two new unknowns, and~$z_1, z_2$ have disappeared.

Equations (i), (ii), (iii) define an affine subvariety of~$\A^{14}_F$, the affine space of dimension 14 over~$F$; we consider its intersection with the open subset defined by~$\beta \ne 0$, $\gamma \ne 0$, and call it~$\mathscr X_F$. 
Also, note that equation (i) implies that $\tilde B$ is a \emph{unit} of $F_a := F[X]/(X^2-a)$, and similarly equation (ii) implies that $\tilde C$ is a \emph{unit} of $F_d := F[Y]/(Y^2-d)$.
In particular, we can describe $\mathscr X_F$ in a more conceptual way using the Weil-restriction functors $R_{\E|F}$, $R_{F_a|F}$ and $R_{F_d|F}$.
Namely, consider the $F$-variety:
\[
\mathscr{Y} := \Gm \times \Gm \times R_{\E/F} \A^2_{\E} \times R_{F_a/F}\Gm \times R_{F_d/F}\Gm \, . 
\]
We view an $F$-point of $\mathscr{Y}$ as a tuple $(\beta,\gamma,u,v,\tilde B,\tilde C)$, with $\beta,\gamma \in F^\times$, $u,v \in \E$, $\tilde B \in F_a^\times$ and $\tilde C \in F_d^\times$.
Then $\mathscr X_F$ is the closed subvariety of $\mathscr{Y}$ defined by the three equations:
\[ \Norm_{F_a/F}(\tilde B) = b, \ \Norm_{F_d/F}(\tilde C) = c, \ u^2 \beta \tilde B + v^2 \gamma \tilde C = 1 \, . \]

We have the following trivial, yet crucial property: for an extension $L/F$, we have
\[ \mathscr X_F \times_{\spec(F)} \spec(L) = \mathscr X_L \, .   \]
In particular, we have~$\mathscr X_F(L) = \mathscr X_L(L)$ (using the standard notation for the set of rational points). As already noted, the set~$\mathscr X_F(F)$ is non-empty if and only if the Massey product~$\langle a, b, c, d \rangle $ is defined and vanishes in the cohomology of~$F$. Now, it is obviously also true, but nicer, that for any field extension~$L/F$, the set~$\mathscr X_F(L)$ is non-empty if and only if the Massey product~$\langle a, b, c, d \rangle $ is defined and vanishes in the cohomology of~$L$. This is a property which is expected of a ``splitting variety'' for the problem of the vanishing of $\langle a, b, c, d \rangle$.

As it turns out, we can introduce a second splitting variety $X_F$. It will depend on choices, and so is not canonically associated with the problem alone; on the other hand, the local-global principle is established in the Appendix for~$X_F$ rather than~$\mathscr X_F$. The construction will echo Proposition~\ref{prop-modifiers} rather precisely. Let~$Z_F$ be the subvariety of $R_{F_a/F}\Gm \times R_{F_d/F}\Gm$, defined over~$F$ be the equations 
\[ \Norm_{F_a/F}(\tilde B) = b, \ \Norm_{F_d/F}(\tilde C) = c \, .   \]
There is an obvious morphism~$\pi\colon \mathscr X_F \to Z_F$ (forgetting~$\beta, \gamma, u, v$), and we will define~$X_F$ to be a fibre of~$\pi$ above an~$F$-rational point of~$Z_F$. That is, we suppose from now on that~$(a,b)_F = (c,d)_F = 0$, and using Proposition~\ref{prop-D4-iff-cup-prop-iff-norm} twice, we select~$\tilde B_0 \in F_a$ and~$\tilde C_0 \in F_d$ whose norms are~$b$ and~$c$ respectively; then we put~$X_F = \pi^{-1}(\tilde B_0, \tilde C_0)$.
The variety~$X_F$ is thus defined by the equation 
\[ \ u^2 \beta \tilde B_0 + v^2 \gamma \tilde C_0 = 1 \, .  \]
Note that our construction of $X_F$ depends on the choice of $\tilde B_0$ and $\tilde C_0$ as above.
In the sequel, this choice will always be clear from context, so we omit the $\tilde B_0$, $\tilde C_0$ from the notation.

It is clear that~$X_F$ is also compatible with base-change, just like~$\mathscr X_F$ is. That it is also a splitting variety is part of the next Theorem.


\begin{thm}
\label{thm-XF-splitting-variety}
Let the notation be as above (in particular, $\tilde B_0$ and~$\tilde C_0$ have been chosen). Let~$L/F$ be any field extension. The following statements are equivalent.
\begin{enumerate}
\item The Massey product~$\langle a, b, c, d \rangle $ is defined and vanishes in the cohomology of~$L$.
\item The variety~$\mathscr X_F$ has an~$L$-rational point.
  \item The variety~$X_F$ has an~$L$-rational point.
\end{enumerate}

Moreover, let~$\beta, \gamma \in L$. Then there is a rational point~$(\beta, \gamma, u, v) \in X_F(L)$ if and only if we have $(\beta B_0, \gamma C_0)_{L[\sqrt a, \sqrt d]} = 0$, $(\beta B_0, c)_{L[\sqrt a]} = 0$, $(b, \gamma C_0)_{L[\sqrt d]} = 0$ and~$(b, c)_L = 0$ simultaneously, where~$B_0 \in L[\sqrt a]$ corresponds to~$\tilde B_0$ under the natural map~$F_a \to L[\sqrt a]$, and likewise for~$C_0 \in L[\sqrt d]$.
\end{thm}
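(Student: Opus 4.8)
The plan is to leverage the machinery already assembled rather than to argue from scratch. The first equivalence, that $(1)$ holds if and only if $\langle a,b,c,d\rangle$ is defined and vanishes over $L$, is precisely Theorem~\ref{thm-main-four-cup-products}, combined with Proposition~\ref{prop-equation-iff-four-cup-products} translating the four cup-product conditions into the solvability of the equation $u^2\tilde B + v^2 \tilde C = 1$ over $\E\otimes_F L$. So the real content is the chain $(1)\Leftrightarrow(2)\Leftrightarrow(3)$ and the final ``moreover'' clause, all of which I would organize around the base-change compatibility $\mathscr X_F\times_{\Spec F}\Spec L = \mathscr X_L$ and the analogous statement for $X_F$.

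First I would dispose of $(1)\Leftrightarrow(2)$. By base-change compatibility, $\mathscr X_F(L) = \mathscr X_L(L)$, so having an $L$-point of $\mathscr X_F$ is the same as having an $L$-rational point of $\mathscr X_L$. The defining equations of $\mathscr X_L$ are exactly (i),(ii),(iii) over $L$, which by the discussion preceding the theorem (via Theorem~\ref{thm-main-four-cup-products}, Proposition~\ref{prop-D4-iff-cup-prop-iff-norm}, and Proposition~\ref{prop-equation-iff-four-cup-products}) is equivalent to the vanishing of $\langle a,b,c,d\rangle$ over $L$. The only subtlety is that $\mathscr X_F$ uses the ``modified'' equations (i)--(iii) with unknowns $\beta,\gamma$ rather than the $z_1,z_2$ of (1)--(3); here I would invoke Proposition~\ref{prop-modifiers}, which guarantees that replacing $\tilde B,\tilde C$ by $\beta\tilde B_0,\gamma\tilde C_0$ loses nothing, so the two formulations have $L$-points simultaneously.

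The heart of the argument is the ``moreover'' clause, and from it $(3)$ will follow for free. Fixing $\beta,\gamma\in L$, a point $(\beta,\gamma,u,v)\in X_F(L)$ exists precisely when the single equation $u^2\,\beta\tilde B_0 + v^2\,\gamma\tilde C_0 = 1$ is solvable in $u,v\in \E\otimes_F L$. I would apply Proposition~\ref{prop-equation-iff-four-cup-products} over $L$ with $\tilde B := \beta\tilde B_0$ and $\tilde C := \gamma\tilde C_0$, noting that their norms are $b\beta^2$ resp.\ $c\gamma^2$, i.e.\ $b$ resp.\ $c$ modulo squares, so the hypotheses of that Proposition are met; the corresponding field elements are $\beta B_0$ and $\gamma C_0$. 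The Proposition then says solvability is equivalent to the simultaneous vanishing of $(\beta B_0,\gamma C_0)_{L[\sqrt a,\sqrt d]}$, $(\beta B_0,c)_{L[\sqrt a]}$, $(b,\gamma C_0)_{L[\sqrt d]}$, and $(b,c)_L$, which is exactly the asserted list of four conditions. This gives the ``moreover'' statement directly, and $(1)\Leftrightarrow(3)$ follows by letting $\beta,\gamma$ range over $L^\times$ and again appealing to Theorem~\ref{thm-main-four-cup-products} and Proposition~\ref{prop-modifiers}: an $L$-point of $X_F$ for some $\beta,\gamma$ exists iff those four cup-products vanish for some modifiers, iff $\langle a,b,c,d\rangle$ vanishes over $L$.

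The main obstacle I anticipate is bookkeeping rather than mathematics: one must be careful that the norms $\Norm_{F_a/F}(\beta\tilde B_0) = b\beta^2$ are only required to equal $b$ \emph{modulo squares}, and that Proposition~\ref{prop-equation-iff-four-cup-products} is stated with this flexibility built in, so that the substitution $\tilde B = \beta\tilde B_0$ legitimately satisfies its hypotheses for every $\beta\in L^\times$. A second point of care is that everything must be run over the base field $L$, not $F$, which is where the base-change compatibility of $X_F$ (inherited from that of $\mathscr X_F$, since $X_F$ is a fibre of $\pi$ over an $F$-point that remains rational after any base-change) does the essential work of letting us quote the field-theoretic Propositions over $L$ verbatim.
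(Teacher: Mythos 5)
Your proof is correct and follows essentially the same route as the paper's: reduce to the base field via the base-change identities $\mathscr X_F(L) = \mathscr X_L(L)$ and $X_F(L)=X_L(L)$, obtain (1)$\Leftrightarrow$(2) from Theorem~\ref{thm-main-four-cup-products} combined with Proposition~\ref{prop-equation-iff-four-cup-products}, pass between $\mathscr X_F$ and $X_F$ via Proposition~\ref{prop-modifiers}, and get the ``moreover'' clause by applying Proposition~\ref{prop-equation-iff-four-cup-products} to $\tilde B = \beta\tilde B_0$, $\tilde C = \gamma\tilde C_0$. The only (immaterial) difference is organizational: you derive (3) from the ``moreover'' statement, whereas the paper deduces (2)$\Rightarrow$(3) directly from Proposition~\ref{prop-modifiers} and treats the ``moreover'' clause separately.
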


\begin{proof}
  Since~$\mathscr X_F(L) = \mathscr X_L(L)$, and similarly for~$X_F$, we may as well (and we do) assume that~$L=F$. Then the equivalence  is a mere reformulation of earlier material.  To wit, Theorem~\ref{thm-main-four-cup-products} together with Proposition~\ref{prop-equation-iff-four-cup-products} shows the equivalence of (1) and (2), the variety~$\mathscr X_F$ being defined just for this purpose. The implication (3) $\implies$ (2) is trivial. On the other hand, Proposition~\ref{prop-modifiers} gives (2) $\implies$ (3) readily.

  The ``moreover'' statement is obtained by another application of Proposition~\ref{prop-equation-iff-four-cup-products}. 
\end{proof}

Having such a statement dealing with field extensions is necessary for us, as we intend to apply a local-global principle to prove the existence of rational points, and this requires an understanding of~$X_F(F_v)$ where~$F_v$ is a completion of~$F$.

\section{The $4$-Massey Vanishing Conjecture for number fields} \label{sec-applications}

In this section we finally prove:

\begin{thm} \label{thm-main-number-fields}
Let~$F$ be a number field, and let~$a, b, c, d \in F^\times$ be such that the Massey product~$\langle a, b, c, d\rangle$ is defined. Then $\langle a, b, c, d\rangle $ vanishes. In other words, the $4$-Massey Vanishing Conjecture is true for number fields.
\end{thm}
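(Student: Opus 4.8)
The plan is to deduce Theorem~\ref{thm-main-number-fields} by combining the splitting-variety reformulation established in Theorem~\ref{thm-XF-splitting-variety} with the local-to-global principle proved in the Appendix. Concretely, suppose $\langle a, b, c, d \rangle$ is defined over the number field $F$. The first reduction is to observe that definedness already forces $(a,b)_F = (c,d)_F = 0$: indeed, a defining system $\bar\phi\colon G_F \to \bar\U_5(\f_2)$ projects onto the two $D_4$-factors of $G = D_4 \times D_4$, and Proposition~\ref{prop-D4-iff-cup-prop-iff-norm} then yields the vanishing of these two quaternion classes. Hence we may choose $\tilde B_0 \in F_a$ and $\tilde C_0 \in F_d$ with $\Norm_{F_a/F}(\tilde B_0) = b$ and $\Norm_{F_d/F}(\tilde C_0) = c$, and the variety $X_F$ is well-defined. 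By Theorem~\ref{thm-XF-splitting-variety}, it now suffices to produce an $F$-rational point on $X_F$.

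First I would invoke the Appendix (Theorem~\ref{app:th:numberfield}) to reduce the existence of an $F$-point on $X_F$ to the existence of a family of local points $(P_v)_v \in \prod_v X_F(F_v)$ that is orthogonal to the Brauer group under the Brauer--Manin pairing. Definedness of the Massey product over $F$ implies it is also defined over each completion $F_v$, and since $F_v$ is a local field, the $4$-Massey Vanishing Conjecture is known or easily checked there; so $X_F(F_v) = X_{F_v}(F_v) \neq \emptyset$ for every place $v$ by Theorem~\ref{thm-XF-splitting-variety} applied with $L = F_v$. The crux is therefore not the mere existence of local points, but the arrangement of local points killing the Brauer--Manin obstruction.

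The hard part will be exactly this last step: choosing, at each place $v$, a local point $P_v = (\beta_v, \gamma_v, u_v, v_v)$ so that the resulting adelic point pairs trivially with the relevant Brauer classes. Here the ``moreover'' clause of Theorem~\ref{thm-XF-splitting-variety} is the essential tool, since it gives a completely explicit description of which pairs $(\beta_v, \gamma_v)$ occur as first coordinates of local points, namely those for which the four local cup-product conditions $(\beta_v B_0, \gamma_v C_0)_{F_v[\sqrt a, \sqrt d]} = 0$, $(\beta_v B_0, c)_{F_v[\sqrt a]} = 0$, $(b, \gamma_v C_0)_{F_v[\sqrt d]} = 0$ and $(b,c)_{F_v} = 0$ all hold. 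Because the Brauer group of (the relevant smooth model of) $X_F$ is controlled by a small number of explicit quaternion classes built from $\beta, \gamma, B_0, C_0$, the freedom to adjust $\beta_v$ and $\gamma_v$ by squares and by norms should give enough local flexibility to steer the local invariants; the global reciprocity law then constrains the invariants to sum to zero. I would exploit the square-class freedom in the $\beta_v, \gamma_v$ to modify the local invariant at a chosen auxiliary place (or finite set of places) so as to absorb the reciprocity defect, thereby forcing the total Brauer--Manin obstruction to vanish.

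With the obstruction arranged to vanish, the Appendix's local-to-global theorem yields $X_F(F) \neq \emptyset$, and Theorem~\ref{thm-XF-splitting-variety} translates this back into the statement that $\langle a, b, c, d \rangle$ vanishes over $F$. I expect the generic case (where $ad$, $ab$, $cd$ are all non-squares, treated separately as Theorem~\ref{thm-main-number-fields-generic}) to be cleaner, since there Lemma~\ref{lem-redundancies} collapses the four cup-product conditions to a single one and the Brauer--Manin obstruction should vanish automatically; the general case will require the careful place-by-place bookkeeping of local invariants sketched above, and that bookkeeping — reconciling the explicit local solvability criterion with the reciprocity constraint — is where the genuine work lies.
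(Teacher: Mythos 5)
Your overall architecture---reduce to $X_F(F)\neq\emptyset$ via Theorem~\ref{thm-XF-splitting-variety}, invoke the local-to-global principle of Theorem~\ref{app:th:numberfield}, then arrange local points killing the Brauer--Manin obstruction---is exactly the paper's, and you correctly identify where the difficulty sits. But the mechanism you propose for the crucial step has a genuine gap, and it is not a repairable vagueness. In your proof, the hypothesis that $\langle a,b,c,d\rangle$ is defined over $F$ is used only twice: to get $(a,b)_F=(c,d)_F=0$ (so that some norm lifts $\tilde B_0,\tilde C_0$ exist and $X_F$ can be built) and to get $X_F(F_v)\neq\emptyset$ for every place $v$ (via Massey vanishing over local fields). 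These two consequences are strictly weaker than global definedness, and they do \emph{not} imply the conclusion: in Example~\ref{app:example} ($F=\Q$, $a=d=34$, $b=2$, $c=17$) all three classes $(a,b),(b,c),(c,d)$ vanish over $\Q$ and $X$ has points everywhere locally, yet for \emph{every} local point the invariant $\inv_v\left((\beta_v,17)\right)$ is $0$ at each $v\neq 17$ and nonzero at $v=17$, so $X(\Q)=\emptyset$. There the ``square-class freedom'' you hope to exploit at an auxiliary place simply does not exist: the local invariant is constant on $X(F_v)$ at every place, and no bookkeeping can absorb the reciprocity defect. Since your argument, if valid, would prove the false implication ``$(a,b)_F=(c,d)_F=0$ plus local points everywhere $\implies X_F(F)\neq\emptyset$'', it cannot be completed as written; definedness over $F$ must enter in an essentially stronger way.

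The missing idea is the paper's application of Theorem~\ref{thm-U5-implies-rational-point} to the \emph{global} defining system. That theorem yields not arbitrary norm lifts but special ones: $B_0, C_0$ with $\Norm_{F[\sqrt a]/F}(B_0)=b$, $\Norm_{F[\sqrt d]/F}(C_0)=c$, satisfying in addition $(B_0,c)_{F[\sqrt a]}=0$ and $(b,C_0)_{F[\sqrt d]}=0$, together with (crucially) a class $u\in\h^2(F,\f_2)$ whose restriction to $E$ equals $(B_0,C_0)_E$; the variety $X_F$ is then constructed with \emph{these} $B_0,C_0$. With this data one does not balance a defect across places at all: one exhibits at every single place a local point whose two invariants both vanish. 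If $a$ or $d$ is a nonsquare in $F_v$, then $E_v\supsetneq F_v$ and the restriction $\h^2(F_v,\f_2)\to\h^2(E_v,\f_2)$ is the zero map, so $(B_0,C_0)_{E_v}=\left(u_{F_v}\right)_{E_v}=0$ (this is precisely where $u$, i.e.\ global definedness, is used); then $\beta_v=\gamma_v=1$ gives a point with invariants $(1,c)=(b,1)=0$. If $a$ and $d$ are both squares in $F_v$, take $\beta_v,\gamma_v$ to be the conjugates of $B_0,C_0$, so $\beta_vB_0=b$ and $\gamma_vC_0=c$; the four conditions of Theorem~\ref{thm-XF-splitting-variety} all reduce to $(b,c)_{F_v}=0$, and the invariants vanish because $0=(\beta_vB_0,c)_{F_v}=(\beta_v,c)_{F_v}+(B_0,c)_{F_v}=(\beta_v,c)_{F_v}$, and symmetrically for $(b,\gamma_v)_{F_v}$. (You also need to dispose first of the case where one of $a,b,c,d$ is a square in $F$, via Lemma~\ref{lem-when-abcd-square}, before identifying $F_a$ with $F[\sqrt a]$.) Without extracting this refined data from definedness, the Brauer--Manin obstruction cannot be controlled.
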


\begin{proof}
  Since the Massey product is defined, we can apply Theorem~\ref{thm-U5-implies-rational-point}, but some simplifying remarks are in order. First, we have $(a,b)=(b,c)=(c,d)=0$, and so Lemma~\ref{lem-when-abcd-square} takes care of the case when one of~$a, b, c, d$ is a square in~$F$; now we assume that none of them is a square, and in particular, we identify~$F_a$ and~$F[\sqrt a]$, and we identify~$F_d$ and~$F[\sqrt d]$. We do not distinguish between~$\tilde B$ and~$B$, or between~$\tilde C$ and~$C$, in the notation of Theorem~\ref{thm-U5-implies-rational-point}. A second point is that we may replace once and for all~$b$ by~$bz_1^2$ for~$z_1 \in F^\times$ if we wish, as the class~$\chi_b \in \h^1(F, \f_2)$ is not affected by this, and neither is the Massey product~$\langle a, b, c, d \rangle$. Likewise with~$c$.

With these precautions, the result of our application of Theorem~\ref{thm-U5-implies-rational-point} is this. We can find~$B_0 \in F[\sqrt a]$ and~$C_0 \in F[\sqrt d]$ such that~$\Norm_{F\sqrt a]/F}(B_0) = b$ and~$\Norm_{ F[\sqrt d]/F }(C_0) = c$, while~$(B_0, c)_{F[\sqrt a]} = 0$ and~$(b, C_0)_{F[\sqrt d]} = 0$. Finally, we can find~$u \in \h^2(F, \f_2)$ whose restriction to~$E$ is~$(B_0, C_0)$.

We select this $B_0$ and this~$C_0$ in order to construct the splitting variety~$X_F$, and we proceed to prove that~$X_F(F)$ is non-empty (by Theorem~\ref{thm-XF-splitting-variety}, we will then be done). From Theorem~\ref{app:th:numberfield}, we see that it suffices to show that for each place~$v$ of~$F$, we can find a rational point $(\beta_v, \gamma_v, u_v, v_v )$ in~$X_F(F_v)$, in such a way that our various choices satisfy 
\[ \sum_v \inv_v\left( (\beta_v, c)_{F_v} \right) = 0 \, , \qquad \sum_v \inv_v \left( (b, \gamma_v)_{F_v} \right) = 0 \, .   \]
Here~$\inv_v$ is the unique isomorphism between~$\h^2(F_v, \f_2)$ and~$\f_2$. We turn to this, and in fact we shall arrange to have~$\inv_v\left( (\beta_v, c)_{F_v} \right) = 0$ and $\inv_v \left( (b, \gamma_v)_{F_v} \right) = 0$ at each place.

Let~$v$ be a place. We see~$B_0$ and~$C_0$, chosen above, as elements of~$F_v[\sqrt a]$ and~$F_v[\sqrt d]$ respectively, and we wish rely on the ``moreover'' statement of Theorem~\ref{thm-XF-splitting-variety} with~$L= F_v$. 

First we treat the case when one of~$a$ or~$d$ is not a square in the completion~$F_v$: either way, the field~$F_v[\sqrt a, \sqrt d]$ is strictly larger than~$F_v$. However, it is a well-known fact from the theory of local fields that the restriction map~$\h^2(F_v, \f_2) \to \h^2(F_v[\sqrt a, \sqrt d], \f_2)$ is then the zero map. Since~$(B_0, C_0)_{F_v[\sqrt a, \sqrt d]}$ is the image of~$u_{F_v} \in \h^2(F_v, \f_2)$ (in the above notation), we have in fact $(B_0, C_0)_{F_v[\sqrt a, \sqrt d]} = 0$. For such a place, we take~$\beta_v = \gamma_v = 1$. The four cup-products in (3) of Theorem~\ref{thm-XF-splitting-variety} then vanish, so we have a rational point in~$X_F(F_v)$. In this case~$(\beta_v, c)_{F_v} = 0$ and~$(b,\gamma_v)_{F_v} = 0$, as promised.

  Now suppose alternatively that~$a$ and~$d$ are both squares in~$F_v$. Suppose our element~$B_0$ was of the form~$B_0 = x + y \sqrt a \in F[\sqrt a] \subset F_v$, and let~$\beta_v = x - y \sqrt a \in F_v$, so that~$\beta_v B_0 = b$. Likewise, write~$C_0 = x' + y' \sqrt d$ and let~$\gamma_v = x' - y' \sqrt d \in F_v$, so~$\gamma_v C_0 = c$. The four cup-products mentioned in (3) of Theorem~\ref{thm-XF-splitting-variety} are equal to~$(b,c)_{F_v}$, so they all vanish, and there is an~$F_v$-rational point. Moreover, we have 
\[ 0 = (\beta_v B_0, c)_{F_v} = (\beta_v, c)_{F_v} + (B_0 , c)_{F_v} = (\beta _v, c)_{F_v} \, ,   \]
as~$(B_0, c)_{F[\sqrt a]} = 0$. Similarly we draw~$(b, \gamma_v)_{F_v} = 0$. 
\end{proof}

The argument given in this proof establishes, in particular, that fourfold Massey products always vanish in the cohomology of local fields, when they are defined. This was of course known (we quote a strong version of this in the next proof), but here everything stays fairly concrete.

In many cases, we obtain an improved version of the conjecture:

\begin{thm} \label{thm-main-number-fields-generic}
  Let $F$ be a number field, and let $a,b,c,d \in F^\times$ be given. Suppose that none of~$ad$, $ab$, $cd$ is a square. Then the following are equivalent:
  \begin{enumerate}
    \item The $4$-Massey product $\langle a,b,c,d \rangle$ vanishes.
    \item The $4$-Massey product $\langle a,b,c,d \rangle$ is defined.
    \item One has $(a,b)_F = (b,c)_F = (c,d)_F = 0$.
  \end{enumerate}
\end{thm}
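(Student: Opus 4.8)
The plan is to prove the cycle $(1)\Rightarrow(2)\Rightarrow(3)\Rightarrow(1)$, so that the non-square hypotheses are only genuinely needed for the last implication. The implication $(1)\Rightarrow(2)$ is immediate from the definitions, since ``vanishes'' already includes ``defined''. For $(2)\Rightarrow(3)$ I would observe that this holds over \emph{any} field, with no number-field or non-square hypothesis: if $\langle a,b,c,d\rangle$ is defined there is a continuous $\bar\phi\colon G_F\to\bar\U_5(\f_2)$ realizing the four characters, and postcomposing $\bar\phi$ with the three homomorphisms $\bar\U_5(\f_2)\to D_4$ that retain an adjacent pair of near-diagonal entries (the top-left, middle, and bottom-right $3\times 3$ blocks) produces $D_4$-lifts of $(\chi_a,\chi_b)$, $(\chi_b,\chi_c)$, $(\chi_c,\chi_d)$; Proposition~\ref{prop-D4-iff-cup-prop-iff-norm} then forces $(a,b)_F=(b,c)_F=(c,d)_F=0$. (Alternatively, $(2)\Rightarrow(1)$ is exactly Theorem~\ref{thm-main-number-fields}.)

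The substance is $(3)\Rightarrow(1)$, and this is where the hypotheses $ad,ab,cd\notin F^{\times 2}$ enter. Starting from $(a,b)_F=(c,d)_F=0$, I would use Proposition~\ref{prop-D4-iff-cup-prop-iff-norm} twice to select $\tilde B_0\in F_a$ and $\tilde C_0\in F_d$ with norms $b$ and $c$, and form the associated splitting variety $X_F$. By Theorem~\ref{thm-XF-splitting-variety} it then suffices to prove $X_F(F)\neq\emptyset$. Since $ad$ is not a square we have $[E:F]=4$, the genuinely generic situation; I would invoke the Appendix (Theorem~\ref{app:th:numberfield}) to assert that, under the standing non-square conditions, the Brauer--Manin obstruction to a rational point on $X_F$ vanishes, so that $X_F$ satisfies the Hasse principle. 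The problem is thereby reduced to exhibiting a local point $X_F(F_v)\neq\emptyset$ at every place $v$. I would emphasize the contrast with the proof of Theorem~\ref{thm-main-number-fields}: there one had, via Theorem~\ref{thm-U5-implies-rational-point}, a global class $u\in\H^2(F,\f_2)$ restricting to $(B_0,C_0)_E$ and the additional vanishings $(B_0,c)_{F[\sqrt a]}=(b,C_0)_{F[\sqrt d]}=0$, whereas here only the three consecutive cup-products are assumed, so local solubility must be established from scratch.

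For local solubility I would apply the ``moreover'' clause of Theorem~\ref{thm-XF-splitting-variety}, which reduces $X_F(F_v)\neq\emptyset$ to the simultaneous vanishing over the completions of the four cup-products attached to modifiers $\beta_v,\gamma_v$, and split into cases by the behavior of $a,d$ in $F_v$. When $a$ and $d$ are both squares in $F_v$, I would take $\beta_v,\gamma_v$ to be the conjugates making $\beta_v B_0=b$ and $\gamma_v C_0=c$, so that all four cup-products collapse to $(b,c)_{F_v}$, which is $0$ because $(b,c)_F=0$ globally (Lemma~\ref{lem-eq-scrE-iff-cup-product-a-d-squares}). When $E_v:=F_v[\sqrt a,\sqrt d]$ is strictly larger than $F_v$ I would use the redundancy Lemmas~\ref{lem-redundancies}, \ref{lem-eq-scrE-iff-cup-product-a-square}, \ref{lem-eq-scrE-iff-cup-product-a=d} to cut the four conditions down to one or two cup-products over $E_v$ or a quadratic subextension, and then solve for $\beta_v,\gamma_v\in F_v^\times$ using the nondegeneracy of the local cup-product pairing (and the fact that $\H^2$ of a local field is $\f_2$).

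The hard part will be precisely this local solubility. One must check that at \emph{every} place the relevant cup-products can actually be annihilated by a choice of $\beta_v,\gamma_v$, paying particular attention to places where only one of $a,d,ad$ becomes a square, where the reduction lemmas must be invoked carefully and the archimedean completions handled separately. I expect the non-square hypotheses $ad,ab,cd\notin F^{\times 2}$ to be exactly what is needed both for the Appendix's generic Hasse principle to apply and for each local case above to close. Once local points are produced at all $v$, the generic local-global principle of Theorem~\ref{app:th:numberfield} yields $X_F(F)\neq\emptyset$, hence $(1)$ by Theorem~\ref{thm-XF-splitting-variety}, completing the cycle and thereby all three equivalences.
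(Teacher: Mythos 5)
Your skeleton is the same as the paper's: $(1)\Rightarrow(2)$ is a tautology, $(2)\Rightarrow(3)$ follows from projecting a defining system onto the three $D_4$-quotients and applying Proposition~\ref{prop-D4-iff-cup-prop-iff-norm}, and $(3)\Rightarrow(1)$ is reduced, via Theorem~\ref{thm-XF-splitting-variety} and the Hasse principle of Theorem~\ref{app:th:numberfield} (the only place the non-square hypotheses are used), to producing a point of $X_F(F_v)$ at every place $v$. The gap is the local step, which you yourself flag as ``the hard part'' and only complete in the case where $a$ and $d$ are both squares in $F_v$. The paper closes this step in one line, by a device you miss: applying Theorem~\ref{thm-XF-splitting-variety} over $L=F_v$ in the \emph{other} direction, the statement $X_F(F_v)\neq\emptyset$ is equivalent to the vanishing of $\langle a,b,c,d\rangle$ in the cohomology of $F_v$, and for local fields it is already known (\cite{MTlocal}, Proposition~4.1) that the conditions $(a,b)_{F_v}=(b,c)_{F_v}=(c,d)_{F_v}=0$ --- which hold here by restriction from $F$ --- imply that the $4$-Massey product is defined and vanishes. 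No case analysis and no choice of modifiers is needed.

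Moreover, your plan for the remaining local cases would not work as you state it. In the generic local case, where $E_v:=F_v[\sqrt a,\sqrt d]$ has degree $4$ over $F_v$, there is nothing to ``solve for'': for any $\beta\in F_v^\times$ the projection formula gives $\cores_{E_v/F_v}\bigl((\beta,C_0)_{E_v}\bigr)=(\beta,N_{E_v/F_v}(C_0))_{F_v}=(\beta,c^2)_{F_v}=0$, and corestriction is injective on Brauer groups of local fields, so $(\beta,C_0)_{E_v}=0$; symmetrically $(B_0,\gamma)_{E_v}=0$. Hence the modifiers $\beta_v,\gamma_v$ have no effect whatsoever on $(\beta B_0,\gamma C_0)_{E_v}$, and ``nondegeneracy of the local pairing'' produces nothing. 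What is true instead is that the vanishing is automatic: two applications of the projection formula give $\cores_{E_v/F_v}\bigl((B_0,C_0)_{E_v}\bigr)=(b,c)_{F_v}=0$, hence $(B_0,C_0)_{E_v}=0$ by injectivity of corestriction, and Lemma~\ref{lem-redundancies} then yields the other three conditions, so $\beta_v=\gamma_v=1$ works. The degree-$2$ cases can be closed by similar corestriction arguments (for instance, when $a\in F_v^{\times 2}$ and $d\notin F_v^{\times 2}$, everything reduces to arranging $(\beta B_0,c)_{F_v}=0$, which is possible because the norm group of $F_v[\sqrt{c}\mkern2mu]/F_v$ has index at most $2$ in $F_v^\times$), so your route is salvageable; but as written it is incomplete, and your closing expectation is also off: the hypotheses on $ad$, $ab$, $cd$ play no role in local solubility, which holds unconditionally at every place, exactly as the paper's citation makes clear --- they are needed only for the Hasse principle.
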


\begin{proof}
  The definitions are so arranged that $(1) \implies (2)$ is a tautology, while $(2) \implies (3)$  is (a very small) part of Theorem~\ref{thm-U5-implies-rational-point} (and following remark). The non-trivial portion of the proof is $(3) \implies (1)$.

Assume (3).  From Theorem~\ref{thm-XF-splitting-variety}, we must prove that~$X_F(F)$ is non-empty. By Theorem~\ref{app:th:numberfield} in the Appendix, it suffices to show that for each place~$v$ of~$F$, we have~$X_F(F_v) \ne \emptyset$. Applying Theorem~\ref{thm-XF-splitting-variety} yet again, we now see that we must prove that the Massey product~$\langle a, b, c, d \rangle$ is defined and vanishes in the cohomology of~$F_v$. However, for a local field such as~$F_v$, it is known (see~\cite{MTlocal}, Proposition 4.1) that the conditions~$(a,b)_{F_v} = (b,c)_{F_v} = (c,d)_{F_v} = 0$ (which hold here by restriction of the analogous identities over~$F$) are enough to imply this.
\end{proof}

\begin{rmk}
  \label{rmk-generic}
  The implication $(3) \implies (1)$ from Theorem \ref{thm-main-number-fields-generic} fails in general, if one removes the additional assumptions on $ad$, $ab$, $cd$.
  More precisely, Proposition \ref{app:prop:unramifiedrs} and Remark \ref{app:rmk:unram-iff} provide necessary and sufficient conditions for the classes $(\beta,c)$, $(\gamma,b)$ and/or $(\beta,c)+(\gamma,b)$, over the function field of $X_F$, to be unramified over $F$ (see the notation in \S\ref{subsec-splitting-varieties}).
  When these conditions hold for one of these classes, a Brauer-Manin obstruction to the implication $(3) \implies (1)$ may arise.
  See Example \ref{app:example}, suggested by Y. Harpaz, for a concrete situation where (3) holds, so the variety $X_F$ has local points everywhere, but no $F$-rational point exists -- that is, (1) fails. By Theorem~\ref{thm-main-number-fields}, (2) also fails to hold. (In this case $(\beta,c)$ is unramified.)
  As we see from Theorem~\ref{thm-main-number-fields-generic}, this cannot happen in {\em non-degenerate} situations, where $[F(\sqrt{a},\sqrt{b},\sqrt{c},\sqrt{d}):F] = 2^4$.
\end{rmk}

\section{Explicit constructions} \label{sec-explicit}

Suppose~$a, b, c, d \in F^\times$ are linearly independent in~$\li F$. When the Massey product $\langle a, b, c, d \rangle$ vanishes, there is a certain map~$\phi \colon G_F \to \uf$ which is surjective when composed with~$\uf \to \uf / \Phi(\uf)$, the quotient modulo the Frattini subgroup, as follows from examining the notation (note that~$\Phi (\uf)$ is the intersection of the kernels of the four maps~$s_i$, $i= 1, 2, 3, 4$). If follows that~$\phi $ is itself surjective, and therefore, there exists an extension~$L/F$ such that~$\gal(L/F) \cong \uf$. The compatibility with~$a, b, c, d$ means that~$F[\sqrt a, \sqrt b, \sqrt c, \sqrt d]$ must be contained in~$L$, corresponding to the Frattini quotient via the Galois correspondence.

This section is about constructing~$L$ explicitly, under the condition that our usual four cup-products vanish -- or equivalently, from Lemma~\ref{lem-redundancies}, under the condition~$(B, C)_E=0$. As it turns out, we end up giving an alternative, more explicit proof for Theorem~\ref{thm-rational-point-implies-U5}, restricted to the ``non-degenerate case''.

\begin{thm} \label{thm-conditions-sufficient-explicit}
Let~$F$ be a field of characteristic~$\ne 2$, and let~$a, b, c, d \in F^\times$ be elements such that~$\cl a, \cl b, \cl c, \cl d$ are linearly independent in~$\li F$.

Assume that we can find~$x, y \in F$ such that
\[ x^2 -a y^2 = b \, ,  \tag{1}  \]
and likewise assume that we can find~$x', y' \in F$ such that
\[ (x')^2 - d (y')^2 = c \, . \tag{2}  \]

Finally, put~$B= x + y \sqrt a$ and~$C = x' + y' \sqrt d$, and assume that we can find~$u, v \in F[\sqrt a, \sqrt d]$ such that
\[ u^2 - Bv^2 = C \, . \tag{3}  \]
Under these assumptions, if we put~$w = u + v \sqrt{B}$, then the Galois closure~$L$ of
\[ F[\sqrt a, \sqrt b, \sqrt c, \sqrt d, \sqrt B, \sqrt C, \sqrt w]  \]
verifies~$\gal(L/F) \cong \uf$.
\end{thm}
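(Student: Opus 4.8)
The plan is to realize $\Gal(L/F)$ as $\uf$ by matching the natural filtration
$\uf \twoheadrightarrow \uf/S = D_4 \times D_4 \twoheadrightarrow \uf/\Phi(\uf) = C_2^4$
with a tower of subfields $F \subset M_0 \subset M_1 \subset L$, where $M_0 = F[\sqrt a, \sqrt b, \sqrt c, \sqrt d]$ and $M_1 = F[\sqrt a, \sqrt b, \sqrt c, \sqrt d, \sqrt B, \sqrt C]$. Note first that equation~(3) says $(B,C)_E = 0$, whence by Lemma~\ref{lem-redundancies} all four fundamental cup-products vanish; so Theorem~\ref{thm-rational-point-implies-U5} already produces a homomorphism $\phi \colon G_F \to \uf$ with $s_i \circ \phi = \chi_a,\chi_b,\chi_c,\chi_d$, and by the opening remarks of this section linear independence of $\cl a,\cl b,\cl c,\cl d$ makes $\phi$ surjective. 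Thus its fixed field $L_\phi$ is a genuine $\uf$-extension of $F$, and the entire content of the theorem is to identify $L_\phi$ with the explicitly generated closure $L$, which I would do layer by layer. (A fully self-contained, ``pedestrian'' variant would instead read $\phi$ off the tower directly; the organization is the same.)

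The bottom two layers are routine. Linear independence of $\cl a,\cl b,\cl c,\cl d$ gives $\Gal(M_0/F)\cong C_2^4$, the Frattini quotient $\uf/\bigcap_i \ker s_i$. Applying Proposition~\ref{prop-D4-iff-cup-prop-iff-norm} to equation~(1) and to equation~(2) yields two consistent $D_4$-extensions $F[\sqrt a,\sqrt b,\sqrt B]$ and $F[\sqrt c,\sqrt d,\sqrt C]$, realizing the $t_1$- and $t_2$-type data (so that $\chi_{bB}, \chi_B, \chi_C, \chi_{cC}$ are accounted for); linear independence again forces linear disjointness, so $\Gal(M_1/F)\cong D_4\times D_4 = \uf/S$. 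The consistency of $\phi$ places $\sqrt B, \sqrt C$ inside $L_\phi$, so $M_1 \subseteq L_\phi$.

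The crux is the top layer $\Gal(L/M_1)\cong S$. Here I would exploit that $w = u + v\sqrt B$ lies in $E[\sqrt B]\subseteq M_1$ and that equation~(3) reads $\Norm_{E[\sqrt B]/E}(w)=u^2-Bv^2=C$. Writing $w' = u - v\sqrt B$ for the conjugate over $E$, one has $w w' = C$; since $\sqrt C\in M_1$ this gives $\sqrt{w'}=\sqrt C/\sqrt w \in M_1(\sqrt w)$ and $\cl{w'}=\cl w$ in $M_1^\times/M_1^{\times 2}$. Because $\sqrt b,\sqrt c,\sqrt B,\sqrt C$ all already lie in $M_1$, the subgroup $N=\Gal(M_1/E)$ (where $E=F[\sqrt a,\sqrt d]$, corresponding to $G/N=\langle\sigma_1,\sigma_4\rangle$) fixes the class $\cl w$ modulo squares; hence the $\Gal(M_1/F)$-action on $\Gal(L/M_1)$ factors through $G/N\cong C_2^2$, and $\Gal(L/M_1)$ is the cyclic $\f_2[G/N]$-module generated by $\cl w$, a quotient of the free rank-one module $S$ of Lemma~\ref{lem-S-is-free}. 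Consequently $\Gal(L/F)$ is a quotient $\uf/T$ of $\uf$ by a $\uf$-submodule $T\subseteq S$.

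The main obstacle — and the only place the full force of equation~(3) is used — is to show $T=0$, i.e. that the four translates $\cl w,\sigma_1\cl w,\sigma_4\cl w,\sigma_1\sigma_4\cl w$ are $\f_2$-linearly independent in $M_1^\times/M_1^{\times 2}$, so that $\Gal(L/M_1)\cong S$ has order $2^4$ and the extension class is the class $\xi$ defining $\uf$ rather than a proper degeneration. I would settle this by identifying $\cl w$ with the Shapiro generator $e$ of $S=\co_N^G(\f_2)$: under the Shapiro isomorphism $\h^2(F,S)\cong\h^2(E,\f_2)$ the obstruction is $(B,C)_E$, which vanishes by equation~(3), and $\sqrt w$ provides precisely the corresponding splitting, as one checks against the explicit top-row cocycle $\gamma(g,h)=s_1(g)u_2(h)+t_1(g)t_2(h)+u_1(g)s_4(h)$ of Theorem~\ref{thm-U5-implies-rational-point} using the relations $ww'=C$, $BB'=b$, $CC'=c$. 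Once $\cl w$ is seen to be a $\f_2[G/N]$-module generator, its orbit is a free basis of $S$, its annihilator is $0$, and so $L=L_\phi$ with $\Gal(L/F)\cong\uf$; non-degeneracy is exactly what guarantees that $\cl w$ does not collapse into a proper submodule. Alternatively, one may simply invoke the surjectivity of the $\phi$ above to know $\Gal(L_\phi/M_1)=S$ has order $16$ and then check $\sqrt w\in L_\phi$ generates the full Kummer layer, which forces $L=L_\phi$.
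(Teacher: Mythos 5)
Your overall architecture (the tower $F \subset M_0 \subset M_1 \subset L$, the $N$\nobreakdash-invariance of $[w]$ via $ww'=C$, Shapiro's lemma) is the right one, and it does track the paper's proof up to the top layer; but the proof breaks down exactly where the theorem's content lies. First, the outer framing --- produce $\phi$ from Theorem~\ref{thm-rational-point-implies-U5} and ``identify $L$ with $L_\phi$'' --- is not well-posed. Lifts of the map $G_F \to D_4 \times D_4$ through $\uf \to D_4\times D_4$ form a torsor under twisting by $1$-cocycles valued in $S$; different lifts have different kernels and hence different fixed fields, so there are in general many $\uf$-extensions containing $M_1$ and compatible with $a,b,c,d$, and nothing forces $\sqrt{w}$ to lie in the particular $L_\phi$ produced (via an abstract splitting) by Theorem~\ref{thm-rational-point-implies-U5}. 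Your alternative ending (``simply invoke the surjectivity \dots and check $\sqrt{w} \in L_\phi$'') therefore cannot be executed: selecting the lift whose fixed field contains $\sqrt{w}$ is equivalent to the direct construction you are trying to bypass. Second, in the main line, the step ``Consequently $\Gal(L/F)$ is a quotient $\uf/T$ with $T \subseteq S$'' does not follow from what precedes it. Knowing that $\Gal(L/M_1) \cong W^*$ (which, note, is a \emph{submodule} of $S \cong \f_2[G/N]^*$, dual to the quotient $W$ of $\f_2[G/N]$, not itself a quotient of $S$) says nothing yet about the class of the extension $1 \to W^* \to \Gal(L/F) \to G \to 1$; declaring $\Gal(L/F)$ a quotient of $\uf$ presupposes either $L \subseteq L_\phi$ (unproved, see above) or the identification of this class with that of $\uf$ --- and that identification is precisely what you leave at ``as one checks'': the assertion that $\sqrt{w}$ realizes the splitting against the cocycle $\gamma(g,h)=s_1(g)u_2(h)+t_1(g)t_2(h)+u_1(g)s_4(h)$ is the entire computational content of the theorem, and no argument is given for it.

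For comparison, the paper resolves both difficulties at once and, notably, never proves the linear independence of the four translates of $[w]$ that you single out as the ``main obstacle'' (and also do not prove --- it is genuinely hard to attack directly). It works with the possibly non-free module $W$, takes the class $\alpha \in \h^2(G,W^*)$ of the extension $1 \to W^* \to \Gal(L/F) \to G \to 1$, pushes it forward along the injection $W^* \hookrightarrow S$, and computes the Shapiro image of $\alpha$ in $\h^2(N,\f_2)$ by a concrete field-theoretic lemma (Lemma~\ref{lem-gal-L-prime-E}): $\Gal(K[\sqrt{w}]/E) \cong C_2^2 \times D_4$, proved from $\Norm_{E[\sqrt{B}]/E}(w)=C$ and $K \cap E[\sqrt{B},\sqrt{C},\sqrt{w}] = E[\sqrt{B},\sqrt{C}]$; matching this with the first extension of Lemma~\ref{lem-classes-extensions-of-N} gives the value $x_2x_3$. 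Injectivity of Shapiro's map then identifies the pushed-forward class with the class of $0 \to S \to \uf \to G \to 1$, so $\Gal(L/F)$ embeds into $\uf$ and surjects onto the Frattini quotient, forcing $\Gal(L/F)=\uf$; freeness of $W$ (your linear independence) falls out a posteriori rather than being an input. To salvage your outline, replace the ``$T=0$'' discussion by this pushforward-plus-Frattini argument, and replace ``as one checks'' by the explicit determination of $\Gal(K[\sqrt{w}]/E)$.
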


The rest of this section is devoted to the proof. The argument is self-contained, but assumes the notation from \S\ref{sec-defs} and \S\ref{sec-cup-products}, and uses Shapiro's lemma.

\begin{lem}
Put~$K = F[\sqrt a, \sqrt b, \sqrt c, \sqrt d, \sqrt B, \sqrt C]$. Then~$K/F$ is Galois with $$\gal(K/F) \cong D_4 \times D_4 \, . $$

More precisely, the isomorphism can be chosen such that the standard generating involutions~$\sigma_1, \sigma_2, \sigma_3, \sigma_4 \in D_4 \times D_4$, when viewed in~$\gal(K/F)$, act on the elements of~$K$ as follows:
\[ \begin{array}{c|r|r|r}
    & \sqrt a     & \sqrt b   & \sqrt B \\
\hline
\sigma_1   & - \sqrt a & \sqrt b & \star \\
\sigma_2   & \sqrt a &  - \sqrt b & - \sqrt B \\{}
[\sigma_1, \sigma_2]   & \sqrt a &  \sqrt b & - \sqrt B
\end{array} \quad\textnormal{and}\quad \begin{array}{c|r|r|r}
    & \sqrt d     & \sqrt c   & \sqrt C \\
\hline
\sigma_4   & - \sqrt d & \sqrt c & \star \\
\sigma_3   & \sqrt d &  - \sqrt c & - \sqrt C \\{}
[\sigma_4, \sigma_3 ]  & \sqrt d &   \sqrt c & - \sqrt C
\end{array}\]
Finally, $\sigma_1$ and $\sigma_2$ fix~$\sqrt c, \sqrt d, \sqrt C$, and vice-versa.
\end{lem}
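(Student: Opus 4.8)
The plan is to realise $K$ as the compositum of two $D_4$-extensions manufactured from the first two hypotheses, and then to prove that these two extensions are linearly disjoint over $F$ using the linear independence of $\cl a, \cl b, \cl c, \cl d$ in $\li F$. Accordingly, set $K_1 := F[\sqrt a, \sqrt b, \sqrt B]$ and $K_2 := F[\sqrt c, \sqrt d, \sqrt C]$, so that $K = K_1 K_2$. Note that the third hypothesis $u^2 - B v^2 = C$ plays no role in this lemma; only the norm relations $x^2 - a y^2 = b$ and $(x')^2 - d(y')^2 = c$ are needed.

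First I would treat each factor separately. The relation $x^2 - a y^2 = b$ says $\Norm_{F[\sqrt a]/F}(B) = b$, so $b$ is a norm from $F[\sqrt a]$ and $(a,b)_F = 0$. Since $\cl a, \cl b, \cl c, \cl d$ are linearly independent, the classes $\cl a$, $\cl b$ and $\cl{ab}$ are all nontrivial, so we are exactly in the generic subcase of Proposition~\ref{prop-D4-iff-cup-prop-iff-norm} (the case ``$a$ and $b$ not equal modulo squares''). The construction carried out there applies verbatim to $K_1$: it shows that $K_1/F$ is Galois with $\gal(K_1/F) \cong D_4$, and it produces involutions $\sigma_1, \sigma_2$ dual to $\sqrt a, \sqrt b$, normalised (replacing $\sigma_2$ by $\sigma_2[\sigma_1,\sigma_2]$ if necessary) so that $\sigma_2(\sqrt B) = -\sqrt B$ while $[\sigma_1,\sigma_2](\sqrt B) = -\sqrt B$; this is precisely the first displayed table. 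Running the identical argument with the pair $(d,c)$ in the role of $(a,b)$ and $C$ in the role of $B$ — legitimate because of $(x')^2 - d(y')^2 = c$ — yields that $K_2/F$ is Galois with group $D_4$, with involutions $\sigma_4, \sigma_3$ dual to $\sqrt d, \sqrt c$ realising the second table.

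The heart of the matter, and the step I expect to be the main obstacle, is the linear disjointness $K_1 \cap K_2 = F$; granting it, Galois theory gives $\gal(K/F) = \gal(K_1 K_2/F) \cong \gal(K_1/F) \times \gal(K_2/F) \cong D_4 \times D_4$ via $g \mapsto (g|_{K_1}, g|_{K_2})$. To establish $K_1 \cap K_2 = F$, I would argue that, both $K_i/F$ being Galois, $K_1 \cap K_2$ is Galois over $F$ with group a common quotient of $\gal(K_1/F)\cong D_4$ and $\gal(K_2/F)\cong D_4$. Because $a,b,c,d$ are independent modulo squares, $[F[\sqrt a, \sqrt b, \sqrt c, \sqrt d]:F] = 16$, whereas $[K_1:F]=8$; hence $K_1 \neq K_2$, which excludes $D_4$ (the only nonabelian quotient of $D_4$) as the group of $K_1 \cap K_2$ and forces $\gal((K_1\cap K_2)/F)$ to be abelian. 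Now every abelian subextension of $K_1/F$ lies in the fixed field of the commutator subgroup $[D_4,D_4] = \langle[\sigma_1,\sigma_2]\rangle$, which by the table (as $[\sigma_1,\sigma_2]$ fixes $\sqrt a, \sqrt b$ and negates $\sqrt B$) is exactly $F[\sqrt a, \sqrt b]$; symmetrically the abelian subextensions of $K_2/F$ lie in $F[\sqrt c, \sqrt d]$. Therefore $K_1 \cap K_2 \subseteq F[\sqrt a, \sqrt b] \cap F[\sqrt c, \sqrt d]$, and this intersection is $F$ since $\langle \cl a, \cl b\rangle \cap \langle \cl c, \cl d\rangle$ is trivial in $\li F$. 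Hence $K_1 \cap K_2 = F$.

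Finally, the full action table is assembled from the two factors. Under the isomorphism $\gal(K/F) \cong \gal(K_1/F) \times \gal(K_2/F)$, the elements $\sigma_1, \sigma_2$ correspond to $(\sigma_1,1),(\sigma_2,1)$: they act on $K_1$ as recorded above and restrict to the identity on $K_2 = F[\sqrt c, \sqrt d, \sqrt C]$, which gives both the first table and the claim that $\sigma_1, \sigma_2$ fix $\sqrt c, \sqrt d, \sqrt C$; the symmetric statement for $\sigma_3, \sigma_4$ follows identically. The entries marked $\star$, namely $\sigma_1(\sqrt B)$ and $\sigma_4(\sqrt C)$, are deliberately left unspecified because they take the form $\pm\sqrt b/\sqrt B$ (respectively $\pm\sqrt c/\sqrt C$) rather than $\pm\sqrt B$, and are irrelevant for the sequel.
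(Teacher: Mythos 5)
Your proof is correct and takes essentially the same route as the paper: the identical decomposition $K = K_1K_2$ with $K_1 = F[\sqrt a, \sqrt b, \sqrt B]$, $K_2 = F[\sqrt c, \sqrt d, \sqrt C]$ each a $D_4$-extension via the construction in Proposition~\ref{prop-D4-iff-cup-prop-iff-norm}, followed by the reduction of $K_1 \cap K_2 = F$ to $F[\sqrt a, \sqrt b] \cap F[\sqrt c, \sqrt d] = F$. The only cosmetic differences are that you get $K_1 \neq K_2$ from a degree count on $F[\sqrt a,\sqrt b,\sqrt c,\sqrt d]$ where the paper compares Frattini quotients, and you phrase the key group-theoretic step as ``proper quotients of $D_4$ are abelian, hence factor through $D_4/[D_4,D_4]$'' where the paper says ``every nontrivial normal subgroup of $D_4$ contains the centre'' --- the same fact.
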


The $\star$ means that we do not insist on a value. One may show, for example, that $\sigma_1(\sqrt B) = \pm \frac{\sqrt b} {\sqrt B}$, but the particular sign will not be relevant.

\begin{proof}
Let~$K_1 = F[\sqrt a, \sqrt b, \sqrt B]$ and~$K_2 = F[\sqrt c, \sqrt d, \sqrt C]$. Then~$K_i$ is a~$D_4$-extension of~$F$, for~$i= 1, 2$, and the actions are as announced, for some choices of generating involutions for the dihedral groups : simply argue as in the proof of Proposition~\ref{prop-D4-iff-cup-prop-iff-norm}.

In order to show that~$K= K_1K_2$ is a~$D_4 \times D_4$-extension, it suffices to show that~$K_1 \cap K_2 = F$.

The extension of~$F$ which corresponds to the Frattini quotient of~$\gal(K_1/F)$ resp.\ $\gal(K_2/F)$ is~$F[\sqrt a, \sqrt b]$, resp.\ $F[\sqrt c, \sqrt d]$, so~$K_1 \ne K_2$. Thus~$K_1 \cap K_2$ corresponds to a non-trivial normal subgroup of~$\gal(K_i/F)$, for~$i= 1, 2$. Looking at the normal subgroups of~$D_4$, we see that~$K_1\cap K_2 \subset F[\sqrt a, \sqrt b] \cap F[\sqrt c, \sqrt d] = F$, which concludes the proof.
\end{proof}

From now on we write~$G$ for the group~$D_4 \times D_4$, which we have just identified explicitly with~$\gal(K/F)$. As above, we will write~$N$ for the subgroup of~$\gal(K/F)$ generated by~$\sigma_2, [\sigma_1, \sigma_2], \sigma_3, [\sigma_4, \sigma_3]$.

\begin{lem}
The fixed field of~$N$ within~$K$ is~$E = F[\sqrt a, \sqrt d]$. \hfill $\square$
\end{lem}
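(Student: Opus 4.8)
The plan is to prove the equality by establishing one inclusion and then matching degrees. First I would recall that $N \cong C_2^4$ has order $16$ (this structure was worked out in \S\ref{subsec-U5-subgroups}), so that $G/N \cong C_2^2$ and, applying the Galois correspondence to the extension $K/F$ with group $G = \gal(K/F)$, the fixed field satisfies $[K^N : F] = [G:N] = 4$. On the other hand, since the classes $\cl a, \cl b, \cl c, \cl d$ are linearly independent in $\li F$, in particular $\cl a$ and $\cl d$ are, so $[E:F] = [F[\sqrt a, \sqrt d]:F] = 4$ as well. It therefore suffices to prove the single inclusion $E \subseteq K^N$; the equality $E = K^N$ will then follow from the coincidence of degrees.

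To establish $E \subseteq K^N$, I would simply check that each generator of $N$ fixes both $\sqrt a$ and $\sqrt d$. Recall that $N$ is generated by the four commuting involutions $\sigma_2$, $[\sigma_1,\sigma_2]$, $\sigma_3$, and $[\sigma_4,\sigma_3]$. From the displayed table in the preceding Lemma, $\sigma_2$ and $[\sigma_1,\sigma_2]$ both fix $\sqrt a$; and since $\sigma_1$ and $\sigma_2$ fix $\sqrt c, \sqrt d, \sqrt C$ (the final sentence of that Lemma), both $\sigma_2$ and the commutator $[\sigma_1,\sigma_2]$ fix $\sqrt d$ too. Symmetrically, $\sigma_3$ and $[\sigma_4,\sigma_3]$ fix $\sqrt d$ by the table, while they fix $\sqrt a$ by the ``vice-versa'' clause asserting that $\sigma_3, \sigma_4$ fix $\sqrt a, \sqrt b, \sqrt B$. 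Hence all four generators act trivially on $E = F[\sqrt a, \sqrt d]$, so $N$ fixes $E$ pointwise and $E \subseteq K^N$.

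Combining the two steps yields $E = K^N$. There is no genuine obstacle here; the only point deserving care is the bookkeeping for the two commutators, since $[\sigma_1,\sigma_2]$ and $[\sigma_4,\sigma_3]$ do move $\sqrt B$ and $\sqrt C$ respectively. The cleanest way to dispatch this is to observe that $\sigma_1, \sigma_2$ act trivially on the entire ``second'' subfield $F[\sqrt c, \sqrt d, \sqrt C]$ and fix $\sqrt a$, so any word in $\sigma_1, \sigma_2$ (in particular $[\sigma_1,\sigma_2]$) fixes both $\sqrt a$ and $\sqrt d$; the argument for $\sigma_3, \sigma_4$ and $[\sigma_4,\sigma_3]$ is identical with the roles of the two subfields reversed.
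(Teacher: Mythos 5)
Your proof is correct, and it is precisely the argument the paper leaves implicit (the lemma is stated with only a $\square$): each generator $\sigma_2, [\sigma_1,\sigma_2], \sigma_3, [\sigma_4,\sigma_3]$ of $N$ fixes $\sqrt a$ and $\sqrt d$ by the tables of the preceding lemma, and since $[K^N:F]=[G:N]=64/16=4=[E:F]$ (using the linear independence of $\cl a,\cl d$ in $\li F$), the inclusion $E \subseteq K^N$ forces equality. Nothing to add.
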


Consider now~$w = u + v \sqrt B$ as above, with~$u, v \in F[\sqrt a, \sqrt d]$, and let us study its class~$\cl w \in \li K$.
Note that $w$ is clearly non-zero, since~$w(u - v\sqrt B) = C \ne 0$.

\begin{lem}
The element~$\cl w$ is fixed by~$N$.
\end{lem}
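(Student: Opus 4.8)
The plan is to exploit the fact, just established in the previous lemma, that $E = F[\sqrt a, \sqrt d]$ is the fixed field of $N$. This means $N = \gal(K/E)$, so every $\tau \in N$ fixes $E$ pointwise; in particular it fixes $u$ and $v$, which lie in $E$ by hypothesis (3). Consequently, for any $\tau \in N$ the action on $w = u + v\sqrt B$ is governed entirely by the action on $\sqrt B$: one has $\tau(w) = u + v\cdot\tau(\sqrt B)$. So the whole computation reduces to tracking $\tau(\sqrt B)$ for the four generators of $N$.

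First I would record how the generators $\sigma_2,\ [\sigma_1,\sigma_2],\ \sigma_3,\ [\sigma_4,\sigma_3]$ of $N$ move $\sqrt B$. From the action table, $\sigma_2(\sqrt B) = -\sqrt B$ and $[\sigma_1,\sigma_2](\sqrt B) = -\sqrt B$; from the ``vice-versa'' clause at the end of the first lemma of this section, $\sigma_3$ and $\sigma_4$ fix $\sqrt B$, hence so does $[\sigma_4,\sigma_3]$. It follows that $\sigma_3$ and $[\sigma_4,\sigma_3]$ fix $w$ on the nose, so they certainly fix $\cl w$, while $\sigma_2$ and $[\sigma_1,\sigma_2]$ send $w$ to its ``conjugate'' $\bar w := u - v\sqrt B$.

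The only remaining point is to check that $\sigma_2$ and $[\sigma_1,\sigma_2]$ fix $\cl w$ \emph{modulo squares}, i.e.\ that $w\bar w$ is a square in $K$. But $w\bar w = (u + v\sqrt B)(u - v\sqrt B) = u^2 - B v^2 = C$ by equation (3), and $C = (\sqrt C)^2$ is a square in $K$ since $\sqrt C \in K$ by the very definition of $K$. Hence $\cl w = \cl{\bar w}$ in $\li K$, so $\sigma_2$ and $[\sigma_1,\sigma_2]$ fix $\cl w$ as well. Since all four generators of $N$ fix $\cl w$, the whole group $N$ fixes $\cl w$, as claimed.

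There is no genuine obstacle here: once the pointwise-fixing of $E$ by $N$ and the presence of $\sqrt C$ in $K$ are invoked, the argument is a one-line computation. The only steps requiring a little care are reading off the correct signs for the action of $N$ on $\sqrt B$ (in particular using the ``vice-versa'' statement to see that $\sigma_3,\sigma_4$ fix $\sqrt B$) and recognizing that hypothesis (3) is precisely what forces $w\bar w$ to equal the square $C$.
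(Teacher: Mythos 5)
Your proof is correct and follows essentially the same route as the paper: both arguments check the four generators of $N$, observe that $\sigma_3$ and $[\sigma_4,\sigma_3]$ fix $w$ outright, and handle $\sigma_2$ and $[\sigma_1,\sigma_2]$ by noting that $\sigma_2(w) = u - v\sqrt{B} = C/w = w\bigl(\sqrt{C}/w\bigr)^2$, so that $w$ is only changed by the square $(\sqrt{C}/w)^2 \in K^{\times 2}$. Your explicit invocation of the previous lemma (that $E$ is the fixed field of $N$, hence $u,v$ are fixed) just makes precise what the paper reads off from its action tables.
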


\begin{proof}
The element~$w$ is itself fixed by~$\sigma_3$ and~$[\sigma_4, \sigma_3]$, as we see immediately from the tables. On the other hand $\sigma_2(w) = u - v \sqrt{B}$, and things have thus been arranged so that
\[ \sigma_2(w) = \frac{w \sigma_2(w)} {w} = \frac{C} {w} = w \left(\frac{\sqrt C} {w}\right)^2 \, .   \]
As a result $\sigma_2(\cl w) = \cl w$. The element~$[\sigma_1, \sigma_2]$ has the same effect on~$w$ as~$\sigma_2$, so the same argument applies.
\end{proof}

We now let~$W$ denote the~$G$-module spanned by~$\cl w$ within~$\li K$. By the Lemma this can be seen as a~$G/N$-module, and indeed it is the image of
\[ \pi \colon \f_2[G/N] \longrightarrow W  \]
mapping~$1 \in G/N$ to~$\cl w$. We put~$L = K[\sqrt W]$, which is indeed the~$L$ introduced in the Theorem. Equivariant Kummer theory states that~$L/F$ is Galois, and that~$\gal(L/K) \cong W^*$ {\em via} the Kummer pairing. For future reference, let us recall that~$\tau \in \gal(L/K)$ is viewed as an element of~$W^*$ via
\[ \tau (w) = \frac{\tau(\sqrt w)} {\sqrt w} \in \{ \pm 1 \} \cong \f_2 \, .   \]

So we have an exact sequence
\[ 0 \longrightarrow W^* \longrightarrow \gal(L/F) \xrightarrow[\quad]{q} G \longrightarrow 1 \, , \]
and we let~$\alpha \in \h^2(G, W^*)$ denote the corresponding cohomology class. Consider the following commutative diagram.
\[ \begin{CD}
  \h^2(G, W^*) @>{\pi^*}>> \h^2(G, \f_2[G/N]^*) \\
  @V{\widetilde{sh}}VV              @VV{sh}V \\
  \h^2(N, \frac{W^*} {[w]^\perp})  @>=>>   \h^2(N, \f_2) \, .
\end{CD}
  \]
  Here is a word of explanation about the notation. First, $[w]^\perp = \{ f \in W^* : f([w])= 0 \}$. The map~$sh$ is Shapiro's isomorphism; the map~$\pi^*$ is the injection which is dual to the surjection~$\pi \colon \f_2[G/N] \to W$; the map~$\widetilde{sh}$ is Shapiro-like, defined by using the projection~$W^* \to \frac{W^*} {[w]^\perp}$ and restriction to~$N$; and the bottom map is seen as the identity, since~$\frac{W^*} {[w]^\perp}$ can be identified with~$\f_2$ in a unique way (just like any group of order~$2$). Commutativity is clear.

The cohomology class~$\widetilde{sh}(\alpha )$ corresponds to the extension
 \[ 0 \longrightarrow \frac{W^*} {[w]^\perp} \longrightarrow \frac{q^{-1}(N)} {[w]^\perp } \longrightarrow N \longrightarrow 1 \, . \tag{$\dagger$}  \]
Let us elucidate a few things. First we have
\[ q^{-1}(N) = \gal(L/E) \, .   \]
Further, \begin{align*}
[w]^\perp & = \{ f \in W^* : f([w]) = 0 \} \\
       & \cong \{ \tau  \in \gal(L/K) : \tau  (\sqrt w) = \sqrt w \} \, .
\end{align*}
It follows that, if we put~$L' = K[\sqrt w]$, then~$\gal(L/L') \cong [w]^\perp$, and
\[ \frac{q^{-1}(N)} {[w]^\perp} = \gal(L'/E) \, .   \]

\begin{lem} \label{lem-gal-L-prime-E}
The Galois group~$\gal(L'/E)$ is isomorphic to~$C_2^2 \times D_4$.
\end{lem}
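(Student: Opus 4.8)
The plan is to read off the class of the central extension $(\dagger)$ directly and match it with one of the classes computed in Lemma~\ref{lem-classes-extensions-of-N}. Recall that $(\dagger)$ presents $\gal(L'/E)$ as a central extension $0 \to \f_2 \to \gal(L'/E) \to N \to 1$ with $N \cong C_2^4$; write $\eta \in \h^2(N,\f_2)$ for its class. The lemma follows once we show $\eta = x_2 x_3$: a central extension of $C_2^4$ by $\f_2$ whose class is a product $x_i x_j$ of two distinct dual basis vectors, with no square (Bockstein) terms, splits as $C_2^2 \times D_4$, the factor $D_4$ living in the $x_i,x_j$-directions by Lemma~\ref{lem-coho-class-D4} and the remaining two directions splitting off. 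This is exactly the isomorphism type $\cent(S)/S_i \cong C_2^2 \times D_4$ already recorded after Lemma~\ref{lem-classes-extensions-of-N}.

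Conceptually, the commutative diagram above already yields $\eta = \widetilde{sh}(\alpha) = sh(\pi^*\alpha)$, so it suffices to identify $sh(\pi^*\alpha) \in \h^2(N,\f_2)$. For this I would first check that $W$ is a free $\f_2[G/N]$-module of rank one on the generator $\cl w$; this is where the linear independence of $\cl a, \cl b, \cl c, \cl d$ in $\li F$ enters, as it forces the $G$-translates of $\cl w$ to be independent in $\li K$. Then $W \cong \co_N^G(\f_2) \cong S$, with $\cl w$ corresponding to the free generator $e$, the class $\alpha$ becomes the class of $0 \to S \to \uf \to G \to 1$, and $\widetilde{sh}$ coincides with the ordinary Shapiro map $sh$. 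The computation recalled just before Theorem~\ref{thm-rational-point-implies-U5} then gives $sh(\alpha) = x_2 x_3$, as wanted.

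The concrete heart of the matter, which I would carry out explicitly, is to evaluate lifts of the basis $\sigma_2[\sigma_1,\sigma_2],\ \sigma_2,\ \sigma_3,\ \sigma_3[\sigma_4,\sigma_3]$ of $N$ (dual to $x_1,x_2,x_3,x_4$) on $\sqrt w$. The action tables, together with $\sigma_2(w) = C/w$ — so that $\sigma_2(\sqrt w) = \sqrt C/\sqrt w$ — while $\sigma_2[\sigma_1,\sigma_2]$, $\sigma_3$ and $\sigma_3[\sigma_4,\sigma_3]$ each fix $w$ and hence act on $\sqrt w$ merely by a sign, show that every basis lift squares to the identity (so no square terms appear in $\eta$) and that all commutators of lifts vanish except the one between $\sigma_2$ and $\sigma_3$. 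That single commutator is nontrivial precisely because $\sigma_3(\sqrt C) = -\sqrt C$ whereas $\sigma_2[\sigma_1,\sigma_2]$ and $\sigma_3[\sigma_4,\sigma_3]$ both fix $\sqrt C$; this pins down $\eta = x_2 x_3$.

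The main obstacle is the sign bookkeeping in this last computation: one must track carefully the induced signs on $\sqrt w$ and on $\sqrt C$, and confirm that the only surviving pairing is $x_2 x_3$ with no Bockstein contribution. (In the conceptual route the corresponding difficulty is verifying that $W$ is free of rank one and that $\alpha$ is genuinely the pullback of the $\uf$-class.) The outcome is consistent with Lemma~\ref{lem-classes-extensions-of-N}(1) and the remark immediately following it, which is exactly what we expect.
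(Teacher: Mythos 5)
Your explicit computation (third paragraph) is correct, and it is a genuinely different proof from the paper's. The paper computes no cocycle here: it forms $M = E[\sqrt B, \sqrt C, \sqrt w]$, notes that $\Norm_{E[\sqrt B]/E}(w) = C$ together with $[E[\sqrt B,\sqrt C]:E]=4$ makes $M$ a $D_4$-extension of $E$, shows $K \cap M = E[\sqrt B, \sqrt C]$ (a larger intersection would force $M \subset K$, a non-abelian extension inside an abelian one), and then identifies $\gal(L'/E)=\gal(KM/E)$ with the fibre product $C_2^4 \times_{C_2^2} D_4$, which is $C_2^2 \times D_4$ because $C_2^4 \to C_2^2$ splits. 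You instead read off the class $\eta$ of $(\dagger)$ from squares and commutators of lifts, and your sign bookkeeping is right: the lift of $\sigma_2$ is an involution precisely because $\sigma_2$ fixes $\sqrt C$, the other three lifts act on $\sqrt w$ by a sign and are involutions, and the only non-commuting pair is the lifts of $\sigma_2$ and $\sigma_3$, since $\sigma_3(\sqrt C)=-\sqrt C$ while $\sigma_2[\sigma_1,\sigma_2]$ and $\sigma_3[\sigma_4,\sigma_3]$ fix $\sqrt C$. The passage from $\eta = x_2x_3$ to the isomorphism type is also sound: the extension is inflated along the split surjection $(x_2,x_3)\colon N \to C_2^2$ from the extension of Lemma~\ref{lem-coho-class-D4}, so a $C_2^2$ splits off. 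Your route even buys something: it proves directly that $(\dagger)$ has class $x_2x_3$, which is exactly what the paper must extract afterwards from ``this last Lemma, and its proof''; the paper's route buys a computation-free argument and yields $[L':K]=2$ along the way.

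Two caveats. The serious one concerns your second paragraph: the ``conceptual route'' is circular. Claiming that, under an identification $W \cong \co_N^G(\f_2) \cong S$, the class $\alpha$ ``becomes the class of $0 \to S \to \uf \to G \to 1$'' is precisely the assertion that $\gal(L/F)\cong\uf$ as an extension of $G$ by $S$ --- the theorem this whole section is building towards, and which the paper deduces \emph{from} the present lemma via the commutative diagram and the injectivity of Shapiro's map and of $\pi^*$. Likewise, freeness of $W$ over $\f_2[G/N]$ is not a formal consequence of the independence of $\cl a, \cl b, \cl c, \cl d$: it amounts to $[L:K]=2^4$, again part of the final theorem, and your appeal to it would need a genuine argument about the translates of $\cl w$. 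So only your concrete route can stand as a proof. The minor caveat: your computation tacitly assumes that the kernel of $(\dagger)$ is $\f_2$ rather than trivial, i.e.\ that $w$ is not a square in $K$; otherwise $\gal(L'/E)=N\cong C_2^4$ and the lemma fails. The paper gets this for free from $K\cap M \neq M$. In your argument it follows from your own computation, but you should say so explicitly: if $\sqrt w$ lay in $K$, then evaluating $\sigma_2\sigma_3$ and $\sigma_3\sigma_2$ on $\sqrt w$ would give results differing by a sign, contradicting the commutativity of $N=\gal(K/E)$.
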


\begin{proof}
The element~$w \in E[\sqrt B]$ satisfies~$N_{E[\sqrt B]/E}(w) = C$, and it follows that
\[ M=E[\sqrt B, \sqrt C, \sqrt w] \]
is a~$D_4$-extension of~$E$. (Here we do use the fact that~$E[\sqrt B, \sqrt C]$ is a~$C_2^2$-extension of~$E$, which we know from~$\gal(K/E)= N$.)

Of course~$K/E$ is Galois with group~$N \cong C_2^4$, so that~$L' = KM$ is Galois with group
\[ \gal(KM/E)= \gal(K/E) \times_{\gal(K\cap M /E)} \gal(M/E) \, .   \]

Certainly we have~$E[\sqrt B, \sqrt C] \subset K \cap M$, and the intersection $K\cap M$ cannot in fact be larger, for (by counting dimensions, say) if it were we would have $K\cap M = M$; so $M \subset K$, a non-abelian extension contained in an abelian one, contradiction. So $K\cap M = E[\sqrt B, \sqrt C]$.

Thus~$\gal(L'/E)$ is of the form~$C_2^4 \times_{C_2^2} D_4$. Since the map~$C_2^4 \to C_2^2$ is split, we see that this fibre product is in fact isomorphic to~$C_2^2 \times D_4$.
\end{proof}

From this last Lemma, and its proof, it follows that ($\dagger$) can be identified with the first exact sequence in Lemma~\ref{lem-classes-extensions-of-N}, which implies that~$\widetilde{sh}(\alpha )= x_2 x_3$. As a consequence, from the commutativity of the diagram above and the injectivity of Shapiro's map, we see that $\pi^*(\alpha )$ describes the extension
\[ 0 \longrightarrow S\cong \f_2[G/N]^* \longrightarrow \uf \longrightarrow G \longrightarrow 1 \, .   \]
We conclude that~$\gal(L/F)$ is a subgroup of~$\uf$ which maps onto the Frattini quotient~$\gal(F[\sqrt a, \sqrt b, \sqrt c, \sqrt d]/F)$. Thus~$\gal(L/F)= \uf$.

\begin{ex}
Let us take~$F = \q$ and $a= 11$, $b=5$, $c=79$, $d=13$. Let us try to look for solutions to (1)-(2)-(3). A little calculation with Hilbert symbols reveals that~$(a,b)=(b,c)=(c,d)=0$, and so Theorem~\ref{thm-main-number-fields} guarantees that such solutions do exist.

In practice though, the quickest way to look for~$x, y, z \in \q$ such that
\[ x^2 -11 y^2 = 5 z^2   \]
is to pick random integers~$y$ and~$z$ until~$11 y^2 + 5 z^2$ is a square. Let us fix an initial solution, say~$x_0 = 4$, $y_0 = z_0 = 1$, and put~$B_0= 4 + \sqrt{11}$. Likewise~$C_0 = 14 + 3 \sqrt{13}$ is an initial solution to equation (2).

With these random values, there will likely be no solution to (3), since~$(B_0, C_0) \ne 0$ in general. The next step is to pick random integers~$f, g \in \z$ and check whether~$(fB_0, gC_0) = 0$. Again, Proposition~\ref{prop-modifiers} justifies the existence of these -- but there would be no harm in trying even if we did not know that. The most efficient method seems to be to compute the conductor of~$(fB_0, gC_0)$, that is, the product of those prime ideals in the ring of integers of~$E = \q[\sqrt {11}, \sqrt{13}]$ such that the cup product~$(fB_0, gC_0)$ maps to a non-zero class in the corresponding non-archimedean completion: this is an operation done quickly by the software PARI (which we have used through Sagemath). If the conductor is trivial, we use the more time-consuming methods of PARI to find a rational point on the conic defined by (3).

Having found~$f, g$, we put~$B= fB_0$ and~$C= gC_0$ (but also in principle~$x=fx_0, y= fy_0, z= fz_0$, and so on, thus changing our solutions to (1), (2), although these do not show up themselves in the statement of Theorem~\ref{thm-conditions-sufficient-explicit}).

In the case at hand, one search of the type just described has yielded the solutions
\[ B=\frac{1}{7} \, \sqrt{11} + \frac{16}{7}\, , \qquad C=\frac{9}{2} \, \sqrt{13} + \frac{37}{2} \]
which verify~$(B, C)= 0$. Indeed, equation (3) is solved by~$w = u + v \sqrt{B}$ with
\[ u=\frac{2}{57319} \, \sqrt{13} {\left(40730430348570235670 \, \sqrt{11} - 283850079471799786881\right)} \]
\[ -\frac{642122498218267058484}{57319} \, \sqrt{11} + \frac{143760709913945809313}{7396} \, ,  \]
\[ v=\frac{4}{286595} \, \sqrt{13} {\left(27413052840094197823 \, \sqrt{11} - 191041370339652873536\right)} \]
\[ -\frac{172868666747038399008}{57319} \, \sqrt{11} + \frac{193512315164122974131}{36980} \, . \]

\end{ex}

\appendix

\section{Local-global principles and the splitting varieties}

\begin{center}
{\sc by Olivier Wittenberg\footnote{D\'epartement de math\'ematiques et applications, \'Ecole normale sup\'erieure, 45~rue d'Ulm, 75230 Paris Cedex 05, France. {\tt wittenberg@dma.ens.fr}} }
\end{center}


The goal of this Appendix is to establish a local-global principle,
when~$F$ is a number field,
for the existence of a rational point on the variety~$X_F$ appearing in
Theorem~\ref{thm-XF-splitting-variety} 
of the paper (see Theorem~\ref{app:th:numberfield} below).
I am indebted to the authors for sharing drafts of their paper with me,
to Pierre Guillot
for numerous discussions on $4$\nobreakdash-Massey products,
and to Yonatan Harpaz for suggesting Example~\ref{app:example}.

\subsection{Statements}
\label{app:sub:statements}

Let~$F$ be a field of characteristic zero.
For $q,q' \in F$, we set $F_q=F[t]/(t^2-q)$
and $F_{q,q'} = F[t,t']/(t^2-q,t'^2-q')$.
Let us fix $a,b,c,d \in F^*$
and $B\in F_a^*$, $C \in F_d^*$
such that $N_{F_a/F}(B)=b$ and $N_{F_d/F}(C)=c$,
and consider the 
closed subvariety
\begin{align*}
X \subset \Gm^2 \times R_{F_{a,d}/F} \A^2_{F_{a,d}}
\end{align*}
defined by the equation $\beta B x^2 + \gamma C y^2 = 1$,
where $R_{F_{a,d}/F}$ denotes the Weil restriction functor
and $\beta,\gamma$ are the coordinates of $\Gm^2$ while $x,y$ are those of $R_{F_{a,d}/F}\A^2_{F_{a,d}}$. (In the body of the paper, this variety is denoted by~$X_F$ rather than~$X$, and our~$B, C$ play the r\^ole of the elements called~$B_0, C_0$ there.)

When~$F$ is a number field, we denote by~$\Omega$ the set of its places,
by~$F_v$ the completion of~$F$ at $v \in \Omega$,
and by $(x,y) \in \{-1,1\}$
the Hilbert symbol of $x,y \in F_v^*/F_v^{*2}$ (see \cite[Ch.~V, \textsection3]{neukirchant}).

\begin{thm}
\label{app:th:numberfield}
Assume that~$F$ is a number field.
If none of $ad$, $ab$, $cd$ is a square in~$F$, then~$X$ satisfies the Hasse principle:
$X(F)\neq\emptyset$ if and only if $X(F_v)\neq\emptyset$ for all $v \in \Omega$.
In any case, the existence of a rational point on~$X$
is equivalent to the existence
of an element of $\prod_{v \in\Omega} X(F_v)$
whose $\beta$ and $\gamma$ coordinates $\beta_v,\gamma_v \in F_v^*$
satisfy
\begin{align*}
\prod_{v \in \Omega} (\beta_v,c)=\prod_{v \in \Omega} (\gamma_v,b)=1\rlap{.}
\end{align*}
In addition, if~$X(F)\neq\emptyset$, then $X(F)$ is dense in~$X$ for the Zariski topology.
\end{thm}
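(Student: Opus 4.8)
The plan is to view $X$ as a conic bundle over the two-dimensional base $\Gm^2$ with coordinates $(\beta,\gamma)$ and to apply the fibration method of Harpaz--Wittenberg \cite{hw}, with the Brauer--Manin obstruction controlled by the two explicit classes $(\beta,c)$ and $(\gamma,b)$. First I would record the geometry: the projection $(\beta,\gamma,x,y)\mapsto(\beta,\gamma)$ realises $X$ as a family whose fibre over $(\beta,\gamma)\in\Gm^2$ is the Weil restriction from $F_{a,d}$ to $F$ of the affine conic $\beta B x^2+\gamma C y^2=1$. A smooth conic over a number field satisfies the Hasse principle and weak approximation by Hasse--Minkowski, and since for a place $v$ one has $F_{a,d}\otimes_F F_v=\prod_{w\mid v}(F_{a,d})_w$, Weil restriction transports both properties to $F$; hence every smooth fibre of the family satisfies the Hasse principle and weak approximation. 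After passing to a smooth compactification $X^{\mathrm c}$ fibred over a compactification of the base, the next task is to describe the fibres over the codimension-one points where the conic degenerates, and to verify that their irreducible components are defined over the residue field by the relevant quadratic (hence abelian) extensions attached to $a,b,c,d$; this is exactly the input the fibration method consumes.

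I would then compute the relevant part of the Brauer group. The classes $(\beta,c)$ and $(\gamma,b)$ are the natural vertical quaternion classes attached to the two factors of the conic bundle, and the core computation is that, modulo $\Br(F)$, the obstruction is carried by at most these two classes and their sum. Whether each of them actually extends to an unramified class on $X^{\mathrm c}$ is governed by the arithmetic of $a,b,c,d$ through the residues of $(\beta,c)$ and $(\gamma,b)$ along the degenerate divisors; this is precisely the content of Proposition~\ref{app:prop:unramifiedrs} and Remark~\ref{app:rmk:unram-iff}, which I would establish by a direct residue calculation. With this in hand, \cite{hw} yields that $X$ has an $F$-rational point as soon as it carries an adelic point orthogonal to $\Br(X^{\mathrm c})$. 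Evaluating the pairing, orthogonality to $(\beta,c)$ reads $\prod_{v}(\beta_v,c)=1$ and orthogonality to $(\gamma,b)$ reads $\prod_{v}(\gamma_v,b)=1$, the contributions of $\Br(F)$ cancelling by the reciprocity law; this gives the second, unconditional assertion of the theorem. The main obstacle I anticipate lies here: checking the hypotheses of the fibration method for this two-parameter family (controlling the degenerate fibres and their splitting fields over the residue fields of the codimension-one points of the base, likely after cutting the base by a suitable pencil of rational curves to reduce to the one-parameter situation of \cite{hw}), together with the exact residue computation that pins down when an obstruction can occur.

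For the first assertion I would specialise to the non-degenerate case where none of $ad,ab,cd$ is a square. Under these hypotheses the residue computations of the previous step show that neither $(\beta,c)$ nor $(\gamma,b)$ nor their sum is unramified on $X^{\mathrm c}$, so none contributes to $\Br(X^{\mathrm c})/\Br(F)$ and the Brauer--Manin obstruction is empty. The criterion of the second assertion then collapses to the bare requirement $X(F_v)\neq\emptyset$ for all $v$, which is the Hasse principle; Example~\ref{app:example} shows this genuinely fails once one of $ad,ab,cd$ becomes a square. Finally, the Zariski density of $X(F)$ when $X(F)\neq\emptyset$ follows from the same circle of ideas: a smooth fibre carrying an $F$-point is a conic with a rational point, hence $F$-rational, so $X$ is unirational over $F$, and combined with the weak approximation supplied by the fibration method this forces $X(F)$ to be dense.
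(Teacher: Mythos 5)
Your overall architecture is exactly that of the paper: compactify $\phi:X\to\Gm^2$ to $\phi':X'\to\P^1_F\times\P^1_F$, apply the fibration method of \cite{hw} (Theorem~\ref{app:th:arithmeticinput}) using Hasse--Minkowski plus Weil restriction for the smooth fibres, and then convert the resulting statement that $X'(F)$ is dense in $X'(\A_F)^{\Br(X')}$ into the theorem via the computation of $\Br_\nr(F(X)/F)$ (Theorem~\ref{app:th:anyfield}) and quadratic reciprocity. However, three of the details you describe are not right, and two of them concern precisely the step you flag as the main obstacle.

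First, the hypothesis that \cite[Th.~9.31]{hw} consumes is \emph{not} that the components of the degenerate fibres are split by abelian extensions; it is that every fibre above a codimension-one point of $(\P^1_F\setminus\{0,\infty\})^2$ be split, with no condition at all above the boundary. With the compactification $X''$ given by $\beta Bx^2+\gamma Cy^2=z^2$, every fibre over $\Gm^2$ is the Weil restriction of a \emph{smooth} conic, hence geometrically irreducible; all the degeneration is pushed into the locus $\beta,\gamma\in\{0,\infty\}$, where nothing needs to be verified. Planning to check abelian splitting of degenerate fibres imports the hypothesis of older fibration theorems, and is both unnecessary here and unclear how to carry out. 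Second, the reduction from the two-parameter base to the one-parameter case of \cite{hw} cannot be done by restricting to a pencil: a member of a pencil is a proper subvariety of $X'$, and an adelic point orthogonal to $\Br(X')$ need not induce an adelic point on the restricted family orthogonal to \emph{its} Brauer group, so the Brauer--Manin sets do not compare. The paper instead proves Theorem~\ref{app:th:arithmeticinput} for all $n$ by a straightforward induction from the case $n=1$, using \cite[Cor.~1.3]{ghs} to guarantee that the geometric generic fibre of the composite map to $\P^1_F$ is still rationally connected. Third, your Zariski-density argument is a non sequitur: rationality of the single fibre through a rational point does not make $X$ unirational over $F$. No such input is needed, since density of $X'(F)$ in $X'(\A_F)^{\Br(X')}$, together with the fact that evaluation of Brauer classes is locally constant in the $v$-adic topology, already yields Zariski density of $X(F)$ as soon as $X(F)\neq\emptyset$.
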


It is not hard to see that~$X$ is smooth, irreducible, and geometrically rational.
When~$F$ is a number field, the existence of a rational point on~$X$
is therefore conjectured to be controlled by the Brauer--Manin obstruction
(see~\cite[\textsection4]{cttoulouse}).
To establish Theorem~\ref{app:th:numberfield},
we shall first deduce
the validity of this conjecture,
in the case of~$X$,
by an application of the fibration method
(specifically, of \cite[Th.~9.31]{hw}).
We shall then prove, in Theorem~\ref{app:th:anyfield} below, 
that the unramified Brauer group of~$X$ consists of constant classes,
except when $ad$, $ab$, or~$cd$ is a square, in which case the classes of
the quaternion algebras $(\beta,c)$ and $(\gamma,b)$ over~$F(X)$ may come into play.
The combination of these two facts yields Theorem~\ref{app:th:numberfield}.
We note that Theorem~\ref{app:th:anyfield} is a purely algebraic statement:
it holds over an arbitrary field~$F$ of characteristic zero.

\begin{thm}
\label{app:th:anyfield}
The natural map $\Br(F) \to \Br_\nr(F(X)/F)$ is surjective
if none of $ad$, $ab$, $cd$ is a square in~$F$.
In any case, its cokernel is killed by~$2$ and is contained in the subgroup of $\mathrm{Coker}\mkern1mu\big(\Br(F)\to\Br(F(X))\big)$
generated by the classes of the quaternion algebras~$(\beta,c)$ and~$(\gamma,b)$ over~$F(X)$.
\end{thm}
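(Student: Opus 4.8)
The plan is to compute the unramified Brauer group $\Br_\nr(F(X)/F)$ by combining the (geometrically rational) fibration structure of $X$ over the torus with coordinates $\beta,\gamma$ with an explicit corestriction computation that produces the quaternion algebras $(\beta,c)$ and $(\gamma,b)$.

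First I would fix a smooth proper model $X^c$ of $X$ over $F$ and record the geometric input noted before the statement: $X$ is smooth, geometrically integral, and geometrically rational. (Rationality is seen by base-changing to $\bar F$, where $F_{a,d}\otimes_F\bar F$ splits the Weil restriction $R_{F_{a,d}/F}\A^2$ into a product of affine planes and the single equation into a product of affine conics $\beta B_i x_i^2 + \gamma C_i y_i^2 = 1$; solving for $\beta$ on the locus $x_i\neq 0$ exhibits $\bar X$ as rational.) Consequently $\Br(\bar X^c)=0$, so $\Br(X^c)=\Br_1(X^c)$ and the Hochschild--Serre spectral sequence gives an injection $\Br_\nr(F(X)/F)/\Br(F)\hookrightarrow \H^1(F,\Pic \bar X^c)$, reducing the problem to an algebraic computation. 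Since the classes we will produce, $(\beta,c)$ and $(\gamma,b)$, are $2$-torsion, the assertion that the cokernel is killed by~$2$ will follow once containment in the subgroup they generate is established.

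Next I would locate the candidate classes. The map $X\to\Gm^2$ onto the $(\beta,\gamma)$-coordinates is, after Weil restriction, a product of conic bundles; the crucial observation is that on the interior $\Gm^2$ the affine conic $\beta B x^2 + \gamma C y^2 = 1$ is \emph{smooth} for all $\beta,\gamma\neq 0$, so this relative conic bundle is smooth and every residue of a class of $\Br(F(X))$ is supported on prime divisors of $X^c$ lying over the boundary $\{\beta,\gamma\in\{0,\infty\}\}$ or over the discriminant of $F_{a,d}/F$. To produce the explicit classes I would use that the tautological solution lives over $F_{a,d}\otimes_F F(X)$, so $(\beta B,\gamma C)=0$ in $\Br(F_{a,d}\otimes_F F(X))$; corestricting this relation down through $F_a$ and through $F_d$, and expanding by bilinearity, the terms $\cores(\beta,\gamma)$ and the like vanish for degree reasons while the projection formula together with $N_{F_a/F}(B)=b$ and $N_{F_d/F}(C)=c$ turns the remaining terms into $(\beta,c)$ and $(\gamma,b)$ modulo $\Br(F)$. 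This shows these are essentially the only classes that can arise.

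Finally I would prove the two halves of the statement. For the upper bound I would compute $\Pic \bar X^c$ as a Galois module from the conic-bundle geometry --- the free part coming from the base, the fibral divisors, and the boundary --- compute $\H^1(F,\Pic \bar X^c)$, and match its nonzero classes with $(\beta,c)$ and $(\gamma,b)$ via their residues; equivalently, a direct residue analysis shows any unramified class differs from an $\F_2$-combination of $(\beta,c),(\gamma,b)$ by a class with no residues, hence constant. For the generic case, assuming none of $ad$, $ab$, $cd$ is a square, I would show that $(\beta,c)$ and $(\gamma,b)$ are themselves \emph{ramified} over $F$: their residues along the relevant boundary divisors are governed by whether $c$, resp.\ $b$, becomes a square in the residue fields built from $F_a$, $F_d$ and $F_{a,d}$, and these are nonzero precisely under the non-square hypotheses. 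Hence in that case they contribute nothing and $\Br(F)\to\Br_\nr(F(X)/F)$ is surjective. I expect the main obstacle to be exactly this last point: the precise determination of when $(\beta,c)$ and $(\gamma,b)$ are unramified, which demands a careful residue computation on a good integral model while tracking the étale algebra $F_{a,d}$ and the Galois action permuting the geometric components of the Weil restriction --- this is where the conditions on $ad$, $ab$, $cd$ enter.
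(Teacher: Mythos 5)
Your setup (geometric rationality, the Hochschild--Serre injection $\Br_\nr(F(X)/F)/\Br(F)\hookrightarrow \H^1(F,\Pic)$ of a compactification, the fibration $\phi\colon X\to\Gm^2$, the candidate classes $(\beta,c)$, $(\gamma,b)$) is sound and overlaps with the paper's starting point, but the proposal has two genuine gaps: the key upper bound is never proved, and the final step for the generic case is false as stated.

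First, the containment of the cokernel in $\langle(\beta,c),(\gamma,b)\rangle$ is not established. The corestriction computation you sketch cannot do it: starting from $(\beta B,\gamma C)=0$ over $F_{a,d}\otimes_F F(X)$, the projection formula gives $\cores_{F_{a,d}/F_d}(\beta B,\gamma C)=\bigl(N_{F_{a,d}/F_d}(\beta B),\gamma C\bigr)=(b,\gamma C)=0$ and then $(b,c)=0$ over $F(X)$; it produces $(\beta,c)\in\mathrm{Im}\bigl(\Br(F)\to\Br(F(X))\bigr)$ only under the \emph{extra} hypothesis that $a$ (or $c$) is a square, so that $F_a$ splits and one can corestrict each component $(\beta B_i,\gamma C)$ separately. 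In any case, exhibiting classes or relations cannot bound $\Br_\nr(F(X)/F)$ from above. Your fallback --- computing $\H^1(F,\Pic)$ of a smooth compactification, or a ``direct residue analysis'' --- is precisely where the entire difficulty of the theorem lives, and it is not a purely geometric computation: whether $(\beta,r)+(\gamma,s)+\varepsilon(\beta,\gamma)$ is unramified depends on square conditions among $a,b,c,d$. The paper settles this by a mechanism absent from your proposal: $\Br$ of the generic fibre of $\phi'$ (a Weil restriction of a conic) consists of constants, so every unramified class is $\phi'^*$ of a class on $(\P^1_F\setminus\{0,\infty\})^2$, whose $2$-torsion is computed explicitly; then, whenever a fibre $\phi^{-1}(\nu_{\gamma=w})$ or $\phi^{-1}(\nu_{\beta=w\gamma})$ has a rational point (manufactured via Hilbert~90, possibly after quadratic extensions of $F$), restricting the class along the resulting section over the closure of that curve forces $rw^\varepsilon$, resp.\ $rs(-w)^\varepsilon$, to be a square. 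Nothing in your outline substitutes for this step.

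Second, your argument for surjectivity in the generic case would fail. Genericity (none of $ad$, $ab$, $cd$ a square) does \emph{not} force $(\beta,c)$ and $(\gamma,b)$ to be ramified: it is compatible with $a$ being a square. Take $F=\Q$, $a=1$, $b=2$, $c=11$, $d=5$, $B=(1,2)\in F\times F=F_a$, $C=4+\sqrt{5}$; then $ad=5$, $ab=2$, $cd=55$ are non-squares, yet by the projection-formula argument above $(\beta,c)=(B_i,c)$ is a constant class, hence unramified with all residues zero. So the assertion that the residues of $(\beta,c)$ ``are nonzero precisely under the non-square hypotheses'' is incorrect, and surjectivity cannot be deduced the way you propose. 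The true statement is a dichotomy: under genericity, each of $(\beta,c)$, $(\gamma,b)$ (and their sum) is either ramified or already constant, because unramifiedness of, say, $(\beta,c)$ forces one of $a$, $c$, $ad$, $cd$ to be a square, which under genericity means $a$ or $c$ is a square, which in turn makes $(\beta,c)$ constant. Your proof needs exactly this additional arithmetic case analysis to close.
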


We recall that the unramified Brauer group $\Br_\nr(K/F)$ of a finitely generated
field extension~$K/F$ is the intersection
of the subgroups $\mathrm{Im}\big(\Br(A) \to \Br(K)\big)$ where~$A$ ranges over the discrete
valuation rings $F \subset A \subset K$ with quotient field~$K$.
If~$F$ has characteristic zero and
$K=F(S)$ for a smooth irreducible variety~$S$,
the group $\Br(S)=\h^2_\et(S,\Gm)$
can be identified with the intersection of the subgroups $\mathrm{Im}\big(\Br(\sO_{S,\xi}) \to \Br(K)\big)$
where~$\xi$ ranges over the codimension~$1$ points of~$S$;
if~$S$ is proper, then $\Br(S)=\Br_\nr(K/F)$.
See
\cite[III, \textsection6]{grbr}, \cite[\textsection5]{ctsanmumbai}.

\begin{rmk}
It is only for simplicity that we assume that~$F$ has characteristic zero
in the statement of Theorem~\ref{app:th:anyfield}:
it allows us to refer to
a smooth compactification of~$X$.
When~$F$ has characteristic $p>2$,
the definition of~$X$ still makes sense and Theorem~\ref{app:th:anyfield} remains true.
Indeed, on the one hand,
the proof given below
easily adapts
to show that the 
cokernel of $\Br(F) \to \Br_\nr(F(X)/F)$
satisfies the desired statement modulo its $p$\nobreakdash-primary torsion subgroup,
and on the other hand,
this cokernel is killed by a power of~$2$
since~$X$ becomes rational over
$F(\sqrt{a},\sqrt{b},\sqrt{c},\sqrt{d})$
(see \cite[Prop.~1.7]{saltmanbrauer}).
Presumably, the proof of Theorem~\ref{app:th:numberfield} should
also work over a global field of characteristic $p>2$; however, the
results we use from~\cite{hw} have not been written down in this setting.
\end{rmk}

\subsection{Geometry}
\label{app:subsec:geometry}

Following~\cite{skorodescent}, we shall say that a scheme of finite type over a field is \emph{split}
if it possesses an irreducible component of multiplicity~$1$ which is
 geometrically irreducible.
In this preliminary section, we compactify~$X$ to the total space of a fibration,
over a proper base,
with very few non-split fibres in codimension~$1$.
This fibration will play a crucial r\^ole both in the proof of Theorem~\ref{app:th:numberfield}
and in that of Theorem~\ref{app:th:anyfield}.
We then proceed to make further observations concerning its fibres
(Proposition~\ref{app:prop:nulifts} below),
for use in the proof of Theorem~\ref{app:th:anyfield}.

Let $X'' \subset \Gm^2 \times R_{F_{a,d}/F} \P^2_{F_{a,d}}$
denote the closed subvariety
defined by $\beta B x^2 + \gamma C y^2 = z^2$,
where $\beta,\gamma$ are the coordinates of~$\Gm^2$
and $x,y,z$ now denote the homogeneous coordinates of $R_{F_{a,d}/F}\P^2_{F_{a,d}}$.
Clearly~$X''$ is smooth and contains~$X$ as a dense open subset.
Let us fix once and for all a smooth compactification
$X'' \subset X'$ such that
the map
$\phi:X'' \to \Gm^2$
defined by
$\phi(\beta,\gamma,[x:y:z])=(\beta,\gamma)$
extends to a morphism $\phi':X'\to \P^1_F \times \P^1_F$.
(We view $\Gm^2$ as an open subset of $\P^1_F \times \P^1_F$.)
For $w \in F^*$,
let $\nu_{\gamma=w},\nu_{\beta=w\gamma} \in \Gm^2$ denote the
generic points of the subvarieties of~$\Gm^2$ defined by $\gamma=w$ and by $\beta=w\gamma$,
respectively. The next proposition lists sufficient conditions for the fibre of~$\phi$
(or, equivalently, of~$\phi'$) above
these points to contain a rational point (\emph{i.e.}, an $F(\Gm)$\nobreakdash-point)
for some~$w$.

\begin{prop}
\label{app:prop:nulifts}
The following statements hold:
\begin{enumerate}
\item If~$c$ is a square in~$F$ or if~$d$ and~$ac$ are squares in~$F$,
\label{app:item:c}
then there exists $w \in F^*$ such that $\phi^{-1}(\nu_{\gamma=w})$,
 $\phi^{-1}(\nu_{\gamma=aw})$, and
 $\phi^{-1}(\nu_{\gamma=dw})$
contain a rational point.
\item
If $a=b=c=d$ in $F^*/F^{*2}$, 
then there exists $w\in F^*$ such that
$\phi^{-1}(\nu_{\beta=w\gamma})$
and $\phi^{-1}(\nu_{\beta=aw\gamma})$
contain a rational point.
\item In the case where~$d$ and~$ac$ are squares in~$F$,
one can take $w=C_1$ (or $w=C_2$) in~\eqref{app:item:c},
where $C=(C_1,C_2)$ denotes the image of~$C$ by the isomorphism
$F_d=F\times F$
induced by the choice of a square root of~$d$.
\end{enumerate}
\end{prop}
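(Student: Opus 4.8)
The plan is to produce, in each case and for each listed generic point, an explicit point on the corresponding fibre of $\phi$. Concretely, a rational point of $\phi^{-1}(\nu_{\gamma=w})$ is a point $[x:y:z]$ of $\P^2$ with coordinates in $F_{a,d}\otimes_F F(\beta)$ satisfying $\beta B x^2 + wC y^2 = z^2$, while a rational point of $\phi^{-1}(\nu_{\beta=w\gamma})$ is a point $[x:y:z]$ over $F_{a,d}\otimes_F F(\gamma)$ satisfying $\gamma(wBx^2 + Cy^2)=z^2$. The observation underlying everything is that $a$ and $d$ are squares in $F_{a,d}=F[t,t']/(t^2-a,t'^2-d)$, namely $t^2$ and $t'^2$. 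In cases (1) and (3) I would look for a point with $x=0$, which exists precisely when the coefficient of $y^2$ is a square in the ambient algebra; in case (2) I would instead look for a point with $z=0$, which amounts to isotropy of the binary form $wBx^2+Cy^2$.

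The technical core is an elementary lemma: if $K/F$ is a quadratic étale algebra (in characteristic $\neq 2$) with nontrivial automorphism $\sigma$, and $\theta\in K^\times$ has $N_{K/F}(\theta)\in F^{\times 2}$, then $\theta\in F^\times\cdot K^{\times 2}$. To prove it I would set $e=\sqrt{N_{K/F}(\theta)}\in F^\times$, so that $\sigma(\theta)/\theta=N_{K/F}(\theta)/\theta^2=(e/\theta)^2$ and $N_{K/F}(e/\theta)=1$; Hilbert 90 (which holds equally for the split algebra $F\times F$) yields $e/\theta=\sigma(r)/r$ for some $r\in K^\times$, whence $\theta/r^2$ is $\sigma$-invariant, i.e. lies in $F^\times$, giving $\theta=(\theta/r^2)\,r^2$.

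For case (1) with $c$ a square, I would apply the lemma to $\theta=C\in F_d^\times$, whose norm is $c$; this gives $w\in F^\times$ with $wC=s^2$ a square in $F_d\subseteq F_{a,d}$. Since $a=t^2$ and $d=t'^2$, the elements $wC$, $awC$, $dwC$ equal $s^2$, $(ts)^2$, $(t's)^2$, so the fibres over $\nu_{\gamma=w}$, $\nu_{\gamma=aw}$, $\nu_{\gamma=dw}$ contain the points $[0:1:s]$, $[0:1:ts]$, $[0:1:t's]$. For the subcase where $d$ and $ac$ are squares, which is the content of (3), I would take $w=C_1$: here $d$ a square gives $F_{a,d}=F_a\times F_a$ with $B\mapsto(B,B)$ and $C\mapsto(C_1,C_2)$, $C_1C_2=c$, so the equation over $\nu_{\gamma=C_1}$ splits as $\beta Bx^2+C_1^2y^2=z^2$ and $\beta Bx^2+cy^2=z^2$; setting $x=0$ works because $C_1^2$ is a square and $c\equiv a$ modulo squares (as $ac$ is a square), hence $c$ is a square in $F_a$. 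The same bookkeeping, using that $a$, $d$, $ac$ and $dc\equiv c$ are squares in $F_a$, produces $x=0$ points on the fibres over $\nu_{\gamma=aC_1}$ and $\nu_{\gamma=dC_1}$.

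Finally, for case (2) I would use $a\equiv d$ modulo squares to write $F_{a,d}=F_a\times F_a$ with $B\mapsto(B,B)$ and $C\mapsto(C',C'')$, where $C''=\sigma(C')$ and $C'C''=N_{F_a/F}(C')=c$. The equation over $\nu_{\beta=w\gamma}$ splits as $\gamma(wBx^2+C'y^2)=z^2$ and $\gamma(wBx^2+C''y^2)=z^2$, so I want $-wB/C'$ and $-wB/C''$ to be squares in $F_a$, which furnishes points $[1:\mu:0]$. I would apply the lemma to $\theta=-BC'$, whose norm is $N_{F_a/F}(-BC')=bc$, a square because $b\equiv c\equiv a$; this yields $w\in F^\times$ making $-wB/C'$ a square, and then $-wB/C''$ is a square too since $C'/C''=C'^2/c\equiv c\equiv a$ is a square in $F_a$. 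The fibre over $\nu_{\beta=aw\gamma}$ is handled by the same $w$, because $-awB/C'=a\cdot(-wB/C')$ and $a$ is a square in $F_a$. I expect the main obstacle to be precisely this lemma together with checking, case by case, that the relevant norm ($c$, respectively $bc$) is a square — which is exactly where the hypotheses are consumed — and noticing that case (2) forces an isotropy ($z=0$) point rather than an $x=0$ point, with a single modifier $w$ that must simultaneously serve both factors of $F_{a,d}$; the degenerate situations in which some of $a,b,c,d$ are already squares go through with the same explicit points, the lemma applying verbatim to split étale algebras.
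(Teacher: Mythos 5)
Your proof is correct and follows essentially the same route as the paper: your key lemma is exactly the form of Hilbert~90 the paper invokes, and you produce the same modifiers $w$ (from $C$ when $c$ is a square, $w=C_1$ when $d$ and $ac$ are squares, and from $-B\iota(C)$ in the $a=b=c=d$ case) via the same decompositions of $F_{a,d}$ into $F_a\times F_a$. The only cosmetic difference is that you exhibit explicit points ($x=0$, resp.\ the isotropic point with $z=0$) where the paper phrases case~(2) through the quaternion identity $(\beta B,\gamma C)=(-\gamma C\beta B,\gamma C)=0$, which is the same argument.
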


\begin{proof}
We first assume that~$c$ is a square in~$F$ and prove~(1) in this case.
As $N_{F_d/F}(C)=c$ is a square in~$F$,
it follows from Hilbert's Theorem~90
that there exists $w \in F^*$ such that $wC$ is a square in~$F_d$.
Evaluating the function~$\gamma C$ at any
$\nu \in \{\nu_{\gamma=w},\nu_{\gamma=aw},\nu_{\gamma=dw}\}$
yields an element of $F(\nu) \otimes_F F_d$ which becomes a square in $F(\nu)\otimes_F F_{a,d}$,
hence the claim.

Let us now assume that~$d$ and~$ac$ are squares in~$F$,
and prove~(1) and~(3) simultaneously.
In this case, following the notation of~(3),
we have $C_1C \in F_{a,d}^{*2}$ as~$c$ is a square
in~$F_a$
and as the decomposition $F_{a,d}=F_a \times F_a$
maps~$C_1C$ to~$(C_1^2,c)$.  Hence, with $w=C_1$, the value of~$\gamma C$ at
any $\nu \in \{\nu_{\gamma=w},\nu_{\gamma=aw},\nu_{\gamma=dw}\}$
is again a square in $F(\nu)\otimes_F F_{a,d}$.

Finally, let us assume that $a=b=c=d$ in $F^*/F^{*2}$ and turn to~(2).
The choice of a square root of~$ad$
determines isomorphisms $\iota:F_d \isoto F_a$ and $F_{a,d}=F_a \times F_a$.
As $N_{F_a/F}(B\iota(C))=bc$ is a square in~$F$, Hilbert's Theorem~90 ensures
the existence of $w \in F^*$ such that $-wB\iota(C)$ is a square in~$F_a$.
As~$c$ is a square in~$F_a$, it follows that~$-wBC$ is a square in~$F_{a,d}$
and hence that the value of the function $-\gamma C \beta B$
at any $\nu \in \{\nu_{\beta=w\gamma},\nu_{\beta=aw\gamma}\}$
is a square in $F(\nu) \otimes_F F_{a,d}$.
Hence, writing $(\beta B, \gamma C)$ for the class in
 $\Br(F(\nu) \otimes_F F_{a,d})$ of the corresponding quaternion
algebra, we have
$(\beta B,\gamma C)=(-\gamma C \beta B,\gamma C)=0$
for any such~$\nu$,
or equivalently $\phi^{-1}(\nu)$ possesses a rational point.
\end{proof}

\subsection{Arithmetic}

Let us deduce Theorem~\ref{app:th:numberfield}  from Theorem~\ref{app:th:anyfield}.
The relevant arithmetic input is the following statement.

\begin{thm}
\label{app:th:arithmeticinput}
Let~$Z$ be a smooth, proper, and irreducible variety over a number field~$F$.
Let $n \geq 1$ be an integer and $f:Z\to (\P^1_F)^n$ be a dominant morphism.
Assume that the geometric generic fibre of~$f$ is
rationally connected
and that the fibre of~$f$
above any codimension~$1$ point of $(\P^1_F \setminus \{0,\infty\})^n$ is split.
Assume, in addition, that for any rational point~$t$ of a dense open subset
of $(\P^1_F)^n$,
the set $Z_t(F)$ is dense in $Z_t(\A_F)^{\Br(Z_t)}$,
where $Z_t=f^{-1}(t)$.
Then $Z(F)$ is dense in $Z(\A_F)^{\Br(Z)}$.
\end{thm}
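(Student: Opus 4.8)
The plan is to argue by induction on~$n$, the base case $n=1$ being exactly the form of the fibration method recorded in \cite[Th.~9.31]{hw}. For $n=1$ the hypotheses state that $f\colon Z\to\P^1_F$ has rationally connected geometric generic fibre, that every fibre above a closed point of $\P^1_F$ other than $0$ and~$\infty$ is split, and that the fibres over rational points of a dense open set satisfy density of rational points in their Brauer--Manin set; since the only possibly non-split fibres sit above the two \emph{rational} points $0,\infty$, the additive-combinatorial input required by the fibration method is available unconditionally, and \cite[Th.~9.31]{hw} yields that $Z(F)$ is dense in $Z(\A_F)^{\Br(Z)}$.

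For the inductive step I would factor $f$ through the projection onto the first $n-1$ coordinates, writing $(\P^1_F)^n=(\P^1_F)^{n-1}\times\P^1_F$ and setting $g:=\mathrm{pr}\circ f\colon Z\to(\P^1_F)^{n-1}$, then verify that $g$ satisfies the three hypotheses of the theorem, so that the case $n-1$ applies to~$g$. Hypothesis~(1) for~$g$, that its geometric generic fibre is rationally connected, follows from the theorem of Graber--Harris--Starr: over the geometric generic point of $(\P^1_F)^{n-1}$ the morphism~$f$ exhibits that fibre as the total space of a fibration over~$\P^1$ whose base and whose geometric generic fibre (the geometric generic fibre of~$f$) are both rationally connected. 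Hypothesis~(2) for~$g$ is a direct consequence of hypothesis~(2) for~$f$: if $\xi$ is a codimension-$1$ point of $(\P^1\setminus\{0,\infty\})^{n-1}$, then the generic point $\xi'$ of $\{\xi\}\times\P^1$ is a codimension-$1$ point of $(\P^1\setminus\{0,\infty\})^n$, so $f^{-1}(\xi')$ is split; as this is the generic fibre of $g^{-1}(\xi)\to\P^1_{F(\xi)}$, a split generic fibre forces $g^{-1}(\xi)$ itself to be split.

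The crux is hypothesis~(3) for~$g$: for $s$ ranging over the rational points of a dense open $U\subseteq(\P^1_F)^{n-1}$, one must show that $Z_s:=g^{-1}(s)$ satisfies density of rational points in its Brauer--Manin set. Here $Z_s\to\P^1_F$ is again a fibration over a number field, and I would show that for general~$s$ it meets all the hypotheses of the $n=1$ case, which then supplies the required density. Since being non-split is a constructible condition and the geometric generic fibre of~$f$ is geometrically irreducible, the non-split locus of~$f$ is contained in a proper closed subset whose divisorial part lies, by hypothesis~(2), in the union of the $2n$ coordinate divisors, the remaining components having codimension $\ge 2$. For $s$ avoiding the values $0,\infty$ in each coordinate and avoiding the (lower-dimensional) image of those codimension-$\ge 2$ components, the slice $\{s\}\times\P^1$ meets the non-split locus only at $(s,0)$ and $(s,\infty)$, so $Z_s\to\P^1_F$ has all its closed fibres split except possibly above $0,\infty$. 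The remaining conditions on~$s$---smoothness, irreducibility and geometric irreducibility of $Z_s$, rational connectedness of its geometric generic fibre (again deformation-invariant and open in characteristic~$0$), and that $\{s\}\times\P^1$ meets in a dense open set the locus where hypothesis~(3) for~$f$ holds---each cut out a dense open subset of $(\P^1_F)^{n-1}$. Intersecting them yields~$U$, and applying the $n=1$ case to each $Z_s$ with $s\in U(F)$ establishes hypothesis~(3) for~$g$, completing the induction.

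I expect the main obstacle to be precisely these general-position arguments in the inductive step: controlling the non-split locus in codimension $\ge 2$, and ensuring that a general rational slice inherits rational connectedness of its geometric generic fibre, irreducibility, and the split-fibre structure, while keeping all the dense-open conditions compatible with the existence of a rational point $s\in U(F)$ (guaranteed because~$F$ is infinite). Once these are in place, the inputs from \cite{hw} and from Graber--Harris--Starr are applied as black boxes.
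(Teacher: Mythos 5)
Your proposal is correct and is essentially the paper's own proof: the paper disposes of the theorem in two lines, taking the $n=1$ case from \cite[Th.~9.31]{hw} and noting that the general case "follows by a straightforward induction" using \cite[Cor.~1.3]{ghs}, which is precisely your induction (with Graber--Harris--Starr supplying rational connectedness of the geometric generic fibre of the intermediate fibration). Your general-position verifications in the inductive step are simply the details the paper leaves implicit behind the word "straightforward."
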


For $n=1$, this is
\cite[Th.~9.31]{hw}.
In view of \cite[Cor.~1.3]{ghs},
Theorem~\ref{app:th:arithmeticinput}
for any~$n$
follows from the $n=1$ case by a straightforward induction.

To prove Theorem~\ref{app:th:numberfield}, we
apply Theorem~\ref{app:th:arithmeticinput} to~$\phi'$, with $n=2$.
By our choice of~$X''$,
the fibres of~$\phi'$ above $(\P^1_F \setminus \{0,\infty\})^2$
are Weil restrictions of smooth projective conics.
In particular, the geometric generic fibre of~$\phi'$ is rational, hence rationally connected,
and the codimension~$1$ fibres of~$\phi'$
above $(\P^1_F \setminus \{0,\infty\})^2$
are smooth and geometrically irreducible, hence split.
To verify the arithmetic hypothesis on the closed fibres of~$\phi'$,
we note
that if~$Z_t$ is a Weil restriction, by a finite extension
of number fields,
of a smooth projective conic,
and if $Z_t(\A_F)\neq\emptyset$,
then
the Hasse--Minkowski theorem implies that $Z_t(F) \neq \emptyset$,
from which it follows that the variety~$Z_t$ is rational over~$F$
and hence that $Z_t(F)$ is dense in $Z_t(\A_F)$.
All in all, we conclude that~$X'(F)$ is dense in $X'(\A_F)^{\Br(X')}$.
Therefore $X(F)\neq\emptyset$ if and only if 
$X'(\A_F)^{\Br(X')} \cap \prod_{v \in \Omega}X(F_v) \neq \emptyset$.
By Theorem~\ref{app:th:anyfield},
the latter condition is implied by the one which appears in the statement
of Theorem~\ref{app:th:numberfield}, which, 
in view of the quadratic reciprocity law,
is itself implied by the existence of a rational point on~$X$.
Thus the proof of Theorem~\ref{app:th:numberfield} is complete.

\subsection{Brauer groups}

In the remainder of this appendix, we prove Theorem~\ref{app:th:anyfield}.
Hereafter~$F$ denotes a field of characteristic zero.
We start with two general remarks about the Brauer group of Weil restrictions of conics
and of trivial $2$\nobreakdash-dimensional tori.

\begin{prop}
\label{app:prop:brauerweilrestriction}
Let~$k'/k$ be a finite separable extension of fields.  Let~$C$ be a smooth, projective conic over~$k'$.
If $R_{k'/k}C$ denotes the Weil restriction of~$C$ from~$k'$ to~$k$,
the pull-back map $\Br(k)\to \Br(R_{k'/k}C)$ is surjective.
\end{prop}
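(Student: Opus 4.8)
The plan is to apply the exact sequence of low-degree terms of the Hochschild--Serre spectral sequence relating the Brauer group of the variety $Y := R_{k'/k}C$ to that of $k$, and to show that the only potentially new classes, which are governed by the $\H^1$ of the geometric Picard group, vanish because that Picard group is a permutation module.

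First I would record the geometry of $Y$ after base change to a separable closure $\bar k$. Writing $n = [k':k]$ and letting $\sigma$ range over the $n$ embeddings $k' \hookrightarrow \bar k$, one has a canonical isomorphism $\bar Y := Y \times_k \bar k \cong \prod_\sigma (C \times_{k',\sigma} \bar k) \cong (\P^1_{\bar k})^n$, since each conic becomes a projective line over $\bar k$. Two consequences follow: $\bar Y$ is smooth, proper and integral (hence $Y$ is smooth and geometrically integral, and $\bar k[\bar Y]^\times = \bar k^\times$), and $\Br(\bar Y) = \Br((\P^1_{\bar k})^n) = 0$, the Brauer group of a smooth proper rational variety over an algebraically closed field of characteristic zero being trivial. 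In particular $\ker\!\big(\Br(Y) \to \Br(\bar Y)\big) = \Br(Y)$.

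Next I would invoke the exact sequence (valid because $\bar k[\bar Y]^\times = \bar k^\times$)
\[ \Br(k) \longrightarrow \Br(Y) \longrightarrow \H^1(\Gamma, \Pic \bar Y), \]
where $\Gamma = \Gal(\bar k/k)$. Thus it suffices to prove $\H^1(\Gamma, \Pic \bar Y) = 0$. From the product decomposition, $\Pic \bar Y = \bigoplus_\sigma \Pic(\P^1_{\bar k}) = \bigoplus_\sigma \Z$, and $\Gamma$ acts by permuting the summands according to its action on the set of embeddings $\sigma$, i.e. on $\Gamma/\Gamma'$ with $\Gamma' := \Gal(\bar k/k')$. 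The action on each rank-one factor is trivial, because $\Gamma$ preserves the degree of a line bundle and hence fixes the ample generator; so $\Pic \bar Y \cong \Z[\Gamma/\Gamma']$ is the permutation, equivalently the (co)induced, module. By Shapiro's lemma $\H^1(\Gamma, \Z[\Gamma/\Gamma']) = \H^1(\Gamma', \Z) = \Hom(\Gamma', \Z) = 0$, the last equality because a profinite group admits no nontrivial continuous homomorphism to the discrete group $\Z$. Feeding this back into the exact sequence yields the surjectivity of $\Br(k) \to \Br(Y)$.

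The one step that demands care is the determination of the $\Gamma$-module structure of $\Pic \bar Y$: I must verify that the Galois action genuinely permutes the rank-one summands with trivial action on each, with no sign or twist. This rests on the facts that the degree map $\Pic(\P^1_{\bar k}) \to \Z$ is $\Gamma$-equivariant and that an element $g \in \Gamma$ carries the factor indexed by $\sigma$ to the factor indexed by $g\sigma$. Everything else — the identification $\bar Y \cong (\P^1_{\bar k})^n$, the vanishing $\Br(\bar Y) = 0$, and the exactness of the Hochschild--Serre sequence — is standard, so this permutation-module computation is where the real content lies.
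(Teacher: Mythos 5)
Your proof is correct and follows essentially the same route as the paper's: both identify $(R_{k'/k}C)\otimes_k\bar k$ with a product of projective lines indexed by the embeddings of $k'$ (equivalently $\Spec(k'\otimes_k\bar k)$), deduce that the geometric Brauer group vanishes and that the geometric Picard group is the permutation module $\Z[\Gamma/\Gamma']$, and then kill the relevant Hochschild--Serre terms $E_2^{0,2}$ and $E_2^{1,1}$ via Shapiro's lemma to conclude that $\Br(k)=E_2^{2,0}$ surjects onto $\Br(R_{k'/k}C)$. Your extra care in verifying the permutation-module structure of $\Pic$ (equivariance of the degree map) is a point the paper leaves implicit, but it is the same argument.
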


\begin{proof}
Let~$\bark$ be a separable closure of~$k$.
The choice of a $(k'\otimes_k\bark)$\nobreakdash-point of~$C$ determines an isomorphism
between $(R_{k'/k}C) \otimes_k \bark$
and the product of $[k':k]$ copies of $\P^1_{\bark}$ indexed by the finite set $\Spec(k' \otimes_k \bark)$.
It follows that $\Br((R_{k'/k}C) \otimes_k \bark)=0$
and that $\mathrm{Pic}((R_{k'/k}C) \otimes_k \bark)$ is isomorphic,
as a Galois module, to $\Z^{\Spec(k' \otimes_k \bark)}$.
Hence, by Shapiro's lemma, the terms $E_2^{0,2}$ and $E_2^{1,1}$ of the
Hochschild--Serre
spectral sequence
$$E_2^{p,q}=\h^p(k,\h^q_\et((R_{k'/k}C) \otimes_k \bark,\Gm))\Rightarrow \h^{p+q}_\et(R_{k'/k}C,\Gm)$$
vanish (see \cite[Ch.~III, Prop.~4.9]{milneet}).
We conclude that $E_2^{2,0}=\Br(k)$ surjects onto $\h^2_\et(R_{k'/k}C,\Gm)=\Br(R_{k'/k}C)$.
\end{proof}

Given a field~$k$ of characteristic different from~$2$
(typically $k=F(\beta,\gamma)$ or $k=F(X)$)
and two elements
$x,y\in k^*$, we denote by $(x,y)$ the class, in $\Br(k)$, of the
corresponding quaternion algebra over~$k$.

\begin{prop}
\label{app:prop:brauertorus}
Any $2$\nobreakdash-torsion element
of $\Br((\P^1_F \setminus \{0,\infty\})^2)$
can be written as
\begin{align*}
(\beta,r)+(\gamma,s)+\varepsilon(\beta,\gamma)+\delta
\end{align*}
for some $r,s \in F^*$,
some $\varepsilon \in \{0,1\}$, and some $\delta \in \mathrm{Im}\mkern1mu\big(\Br(F)\to\Br(F(\beta,\gamma))\big)$.
\end{prop}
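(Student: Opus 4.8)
The plan is to compute $\Br(\Gm^2)[2]$ by iterating the purity (residue) sequence for the open immersion $\Gm\hookrightarrow\A^1_F$. Write $U=(\P^1_F\setminus\{0,\infty\})^2=\Gm^2$ and $K=F(\beta,\gamma)=F(U)$. Since $\Pic(U)=0$, Kummer theory gives $\H^2_\et(U,\mu_2)\cong\Br(U)[2]$, so a $2$-torsion class may be viewed as a class $A\in\Br(K)$ whose residue $\partial_\xi(A)$ vanishes at every codimension-one point $\xi$ of $U$. I would fix the projection $p\colon U\to V$ onto the $\gamma$-coordinate, where $V=\Gm=\Spec F[\gamma^{\pm1}]$ and $k=F(\gamma)=F(V)$; its generic fibre is $U_\eta=\Gm\times_F k$, again with function field $K$.

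First I would restrict to the generic fibre. Because $F$, hence $k$, has characteristic zero, one has $\Br(\A^1_k)=\Br(k)$, so the purity sequence for $\{\beta=0\}\subset\A^1_k$ splits via the residue at $\beta=0$ and yields $\Br(U_\eta)[2]=\Br(k)[2]\oplus k^*/k^{*2}$, the second summand being realized by the symbols $(r,\beta)$. Consequently $A=A_1+(r,\beta)$ in $\Br(K)$ for some $A_1\in\Br(k)[2]=\Br(F(\gamma))[2]$ (the part unramified on $\A^1_k$) and some $r\in F(\gamma)^*$.

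The key step is to feed in the remaining hypothesis, namely that $A$ is unramified along the vertical divisors $U_P=p^{-1}(P)$ for $P$ a closed point of $V$. Let $w_P$ be the associated (Gauss) valuation of $K$, with residue field $\kappa(P)(\beta)$, and let $v_P$ be the valuation of $k$ at $P$. Since $\beta$ is a unit at $w_P$ while $r$ lies in the base field $k$, the residue of $A_1$ at $w_P$ is the image of $\partial_{v_P}(A_1)$, and the tame-symbol formula gives $\partial_{w_P}(r,\beta)=[\bar\beta^{\,v_P(r)}]$; hence
\[
0=\partial_{w_P}(A)=\partial_{v_P}(A_1)+v_P(r)\,[\bar\beta]\quad\text{in}\quad \H^1(\Gm\times_F\kappa(P),\Z/2)=\kappa(P)^*/\kappa(P)^{*2}\oplus\Z/2\,[\bar\beta].
\]
The first term lies in the constant summand and the second in $\Z/2\,[\bar\beta]$, so their vanishing forces simultaneously $v_P(r)\equiv0\pmod 2$ and $\partial_{v_P}(A_1)=0$, for every closed point $P$ of $V$.

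It remains to assemble these conclusions. From $v_P(r)$ even for all $P$ together with $\Pic(V)=0$ and $\Gamma(V,\sO^*)=F^*\cdot\gamma^{\Z}$, I get $r\equiv s\,\gamma^{\varepsilon}$ modulo squares for some $s\in F^*$ and $\varepsilon\in\{0,1\}$, whence $(r,\beta)=(\beta,s)+\varepsilon(\beta,\gamma)$ in $\Br(K)$. On the other hand $\partial_{v_P}(A_1)=0$ for all $P$ means $A_1\in\Br(V)[2]$, and a second application of the split purity sequence, now for $\{\gamma=0\}\subset\A^1_F$, gives $A_1=\delta+(\gamma,s')$ with $\delta\in\mathrm{Im}(\Br(F)\to\Br(K))$ and $s'\in F^*$. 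Combining, $A=(\beta,s)+(\gamma,s')+\varepsilon(\beta,\gamma)+\delta$, which is the asserted form. The step that demands the most care is the clean separation of $\partial_{w_P}(A)$ into its constant and $\bar\beta$-components: one must verify, via the functoriality of residues at the Gauss valuation ($e=1$) and the tame-symbol formula for $(r,\beta)$ with $\beta$ a unit, that $A_1$ contributes only a constant on the fibre while $(r,\beta)$ contributes exactly $v_P(r)\,[\bar\beta]$, for it is precisely this splitting that makes both arithmetic conclusions drop out at once.
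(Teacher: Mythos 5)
Your proof is correct, but it takes a genuinely different route from the one in the Appendix. The paper works symmetrically in $\beta$ and $\gamma$ on $\A^2_F$ with $\Z/2$-coefficients: it combines purity in codimensions $1$ and $2$ with the long exact sequence of a triple to identify $\h^3_{\et,D_1\cup D_2}(\A^2_F,\Z/2\Z)$ with the kernel of the sum-of-residues map $\h^1_{\et}(D_1^0,\Z/2\Z)\oplus\h^1_{\et}(D_2^0,\Z/2\Z)\to\Z/2\Z$, so that the residues of a $2$-torsion class along the two coordinate divisors have the shape $\beta^\varepsilon s$ and $\gamma^\varepsilon r$ with the \emph{same} $\varepsilon$, forced by the compatibility at the origin; one then subtracts $(\beta,r)+(\gamma,s)+\varepsilon(\beta,\gamma)$ and invokes $\h^2_\et(\A^2_F,\Z/2\Z)=\h^2(F,\Z/2\Z)$. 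You instead run an iterated Faddeev-type (fibration) argument: project to one $\Gm$-factor, split $\Br(\Gm_k)[2]=\Br(k)[2]\oplus k^*/k^{*2}$ by the residue at $\beta=0$ (using $\Br(\A^1_k)=\Br(k)$), and then exploit unramifiedness along the \emph{vertical} divisors at Gauss valuations, where residue functoriality (ramification index $1$) and the tame-symbol formula separate the constant component from the $[\bar\beta]$-component; your separation step is sound, since $c\bar\beta^{\,n}$ with $c\in\kappa(P)^*$ is a square in $\kappa(P)(\beta)$ only if $n$ is even and $c\in\kappa(P)^{*2}$, so you correctly obtain $v_P(r)\equiv 0 \pmod 2$ and $\partial_{v_P}(A_1)=0$ for all closed $P$, and conclude via $\Pic(\Gm)=0$, $\Gamma(\Gm,\sO^*)=F^*\cdot\gamma^{\Z}$ and a second application of the split residue sequence over $F$. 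What each approach buys: yours avoids codimension-$2$ purity and the triple sequence, relying only on the standard residue sequence for $\A^1$ over a field and functoriality of residues, and it produces the single $\varepsilon$ automatically rather than as a matching condition; the paper's computation treats $\beta$ and $\gamma$ symmetrically and exhibits that common $\varepsilon$ conceptually, as the codimension-$2$ compatibility (vanishing of the sum of residues at $D_{12}$), which is the structural reason the cross term $(\beta,\gamma)$ can occur at most once.
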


\begin{proof}
Let $D_1 = \A^1_F \times \{0\}$, $D_2 = \{0\} \times \A^1_F$,
$D_{12}=D_1\cap D_2$, and $D_i^0=D_i \setminus D_{12}$.
By purity for \'etale cohomology,
we have $\h^q_{\et,D_{12}}(\A^2_F,\Z/2\Z)=\h^{q-4}_\et(D_{12},\Z/2\Z)=\h^{q-4}(F,\Z/2\Z)$ for any~$q$,
and $\h^3_{\et,D_1^0 \cup D_2^0}(\A^2_F \setminus D_{12},\Z/2\Z)=\h^1_{\et}(D_1^0,\Z/2\Z)
\oplus \h^1_{\et}(D_2^0,\Z/2\Z)$
(see \cite[Ch.~VI, \textsection5]{milneet}).
The long exact sequence of a triple
therefore yields an exact sequence
\begin{align*}
0\to \h^3_{\et,D_1 \cup D_2}(\A^2_F,\Z/2\Z) \to \h^1_{\et}(D_1^0,\Z/2\Z) \oplus \h^1_{\et}(D_2^0,\Z/2\Z) \to \Z/2\Z\rlap{,}
\end{align*}
where the rightmost map is the sum of the residues at~$0$
(\emph{op.\ cit.}, Ch.~III, Rem.~1.26).
Finally, let us consider the localisation exact sequence
\begin{align*}
\h^2_\et(\A^2_F,\Z/2\Z) \to \h^2_\et(\A^2_F \setminus (D_1\cup D_2),\Z/2\Z)
\to \h^3_{\et,D_1 \cup D_2}(\A^2_F,\Z/2\Z)
\end{align*}
(\emph{loc.\ cit.}, Prop.~1.25).
As any $2$\nobreakdash-torsion element of
$\Br((\P^1_F \setminus \{0,\infty\})^2)$
can be lifted to
$\h^2_\et(\A^2_F \setminus (D_1\cup D_2),\Z/2\Z)$
and as $H^1(D_i^0,\Z/2\Z) \subset H^1(F(D_i),\Z/2\Z)=F(D_i)^*/F(D_i)^{*2}$
is generated by $F^*/F^{*2}$ and by the class of~$\beta$ (resp.~$\gamma$) if $i=1$
(resp.~$i=2$),
we deduce from these two exact sequences that for
an arbitrary
$2$\nobreakdash-torsion
class $\alpha \in \Br((\P^1_F \setminus \{0,\infty\})^2)$,
the residues of~$\alpha$ at the generic points of~$D_1$ and~$D_2$,
viewed as elements of $F(D_i)^*/F(D_i)^{*2}$,
are represented
by elements of~$F(D_i)^*$
of the shape
$\beta^\varepsilon s$ and $\gamma^\varepsilon r$,
respectively,
for some $r,s\in F^*$ and some $\varepsilon \in \{0,1\}$.
The class $(\beta,r)+(\gamma,s)+\varepsilon(\beta,\gamma)$ has the same residues
as~$\alpha$ at these two points.
As $\h^2_\et(\A^2_F,\Z/2\Z)=\h^2(F,\Z/2\Z)$
(\emph{op.\ cit.}, Ch.~VI, Cor.~4.20),
these two classes differ by a constant class, in view of the localisation exact sequence.
\end{proof}

The following is a simple consequence of
Proposition~\ref{app:prop:brauerweilrestriction}
and Proposition~\ref{app:prop:brauertorus}.

\begin{prop}
\label{app:prop:explicitgenerators}
The cokernel of the natural map $\Br(F) \to \Br_\nr(F(X)/F)$
is killed by~$2$.  Its elements
are represented by classes of the shape
$$(\beta,r)+(\gamma,s)+\varepsilon(\beta,\gamma)$$
for $r,s \in F^*$ and $\varepsilon\in \{0,1\}$.
\end{prop}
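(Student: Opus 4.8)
The plan is to descend the problem from the total space to the base $\Gm^2$ of the fibration $\phi$, and then to read off the shape of the classes from Proposition~\ref{app:prop:brauertorus}. First I would restrict a given class $\alpha \in \Br_\nr(F(X)/F)$ to the generic fibre of $\phi'$, which is the smooth proper Weil restriction of the conic $\beta B x^2 + \gamma C y^2 = z^2$ over $k' = k \otimes_F F_{a,d}$, where $k = F(\beta,\gamma)$. Because $\alpha$ is unramified on $X'$, this restriction lands in the Brauer group of the proper generic fibre, and by Proposition~\ref{app:prop:brauerweilrestriction} the pull-back $\Br(k) \to \Br(R_{k'/k}(\text{conic}))$ is surjective; hence the restriction of $\alpha$ equals $\phi^*\tilde\alpha$ for some $\tilde\alpha \in \Br(k)$, and since the generic fibre is dense this gives $\alpha = \phi^*\tilde\alpha$ in $\Br(F(X))$. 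The next step is to promote this to $\tilde\alpha \in \Br(\Gm^2)$, i.e.\ to show $\tilde\alpha$ is unramified at every codimension-$1$ point of $\Gm^2$. Over $\Gm^2$ the fibres of $\phi'$ are Weil restrictions of smooth conics, hence smooth and geometrically integral, hence split; so by functoriality of residues the unramifiedness of $\alpha$ forces each residue of $\tilde\alpha$ along a prime divisor of $\Gm^2$ to vanish, giving $\tilde\alpha \in \Br((\P^1 \setminus \{0,\infty\})^2)$.

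At this point Proposition~\ref{app:prop:brauertorus} would conclude the argument, but it applies only to $2$\nobreakdash-torsion classes, so the core of the proof is to verify that $\tilde\alpha$ is $2$-torsion modulo $\Br(F)$. Here I would combine two inputs. On the one hand, since $X$ is geometrically rational and becomes rational over the $2$-power extension $F(\sqrt a,\sqrt b,\sqrt c,\sqrt d)$, the cokernel of $\Br(F) \to \Br_\nr(F(X)/F)$ is $2$-primary by \cite{saltmanbrauer}, so one may assume $\tilde\alpha$ is $2$-primary modulo $\Br(F)$. On the other hand, the residue map $\Br(\Gm^2)/\Br(F) \hookrightarrow \bigoplus_i \h^1(\kappa(D_i),\Q/\Z)$ over the four boundary divisors $D_i$ (those with $\beta \in \{0,\infty\}$ or $\gamma \in \{0,\infty\}$) is injective, its kernel being $\Br(\P^1 \times \P^1) = \Br(F)$. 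It therefore suffices to show each boundary residue $\partial_{D_i}\tilde\alpha$ is killed by $2$. This is where the geometry of the degenerate, non-split fibres over the boundary enters: over $\{\beta = 0\}$ the equation degenerates to $\gamma C y^2 = z^2$, whose multiplicity-$1$ components $z = \pm\sqrt{\gamma C}\,y$ become defined only after a quadratic extension of the residue field (and symmetrically for the other three divisors). Applying residue functoriality upstairs then forces $\partial_{D_i}\tilde\alpha$ to be split by a quadratic extension, hence $2$-torsion.

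Once $\tilde\alpha$ is known to be $2$-torsion modulo $\Br(F)$, Proposition~\ref{app:prop:brauertorus} writes it as $(\beta,r) + (\gamma,s) + \varepsilon(\beta,\gamma) + \delta$ with $r,s \in F^*$, $\varepsilon \in \{0,1\}$, and $\delta \in \mathrm{Im}(\Br(F)\to\Br(F(\beta,\gamma)))$. Pulling back along $\phi^*$, with $\beta,\gamma$ now the coordinate functions on $X$, yields $\alpha \equiv (\beta,r) + (\gamma,s) + \varepsilon(\beta,\gamma) \pmod{\Br(F)}$, which is exactly the asserted shape; since each of these three quaternion classes is $2$-torsion, it follows immediately that the cokernel is killed by $2$. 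I expect the main obstacle to be the second paragraph, and specifically the passage from $2$-primary to genuinely $2$-torsion: this is the one point that is not formal residue bookkeeping over the interior $\Gm^2$, but instead requires the explicit description of the non-split boundary fibres furnished by the geometry of the fibration.
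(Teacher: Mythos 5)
Your overall architecture coincides with the paper's: descend a class $\alpha \in \Br_\nr(F(X)/F)$ to a class $\tilde\alpha$ on $(\P^1_F\setminus\{0,\infty\})^2$ via Proposition~\ref{app:prop:brauerweilrestriction} applied to the generic fibre of $\phi'$, show that the residues of $\tilde\alpha$ at the four boundary divisors are killed by $2$, and conclude with Proposition~\ref{app:prop:brauertorus}. Your first paragraph is sound (it is the paper's first step, with \cite[Prop.~1.1.1]{ctsd94} packaging the splitness argument you sketch). But the step you yourself identify as the crux rests on a false description of the boundary fibres. The fibre of $\phi'$ over the generic point of $\{\beta=0\}$ is not the degenerate conic $\gamma C y^2=z^2$; it is the \emph{Weil restriction along $F_{a,d}/F$} of that degenerate conic, and its multiplicity-one components are not split by a quadratic extension of the residue field $F(\gamma)$. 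Concretely, in the generic case ($[F(\sqrt a,\sqrt d):F]=4$ and $c$ not a square in $F(\sqrt a,\sqrt d)$), the sixteen geometric components $\prod_\sigma L^\sigma_{\epsilon_\sigma}$ of the special fibre fall into Galois orbits of sizes $4$, $4$ and $8$, so the constants fields of the $F(\gamma)$-components have degrees $4$, $4$ and $8$ over $F(\gamma)$. Residue functoriality applied to such a component therefore only yields that $\partial_{D_i}\tilde\alpha$ is split by a degree-$4$ extension, i.e.\ $4\,\partial_{D_i}\tilde\alpha=0$, and your ``hence $2$-torsion'' does not follow. (The approach can be salvaged, but only by a finer analysis: one checks that the intersection of the constants fields of the two multiplicity-one orbits of size $4$ is the quadratic field $F(\gamma)(\sqrt c)$, so a character killed on both is killed on $G_{F(\gamma)(\sqrt{c}\mkern.7mu)}$ and hence by $2$; none of this, nor the regularity and multiplicity checks on a model, nor the degenerate cases where $a$, $d$ or $ad$ is a square, appears in your proposal.)

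The paper's Lemma~\ref{app:lem:residueskilledbytwo} obtains the factor $2$ by a cleaner mechanism that avoids analysing the Weil-restricted special fibre altogether: with $K=F(\beta,\gamma)$ and $\sO_K$ the local ring at the boundary divisor, one base-changes to the \emph{ramified} quadratic extension $K'=K(\sqrt\beta)$. After the substitution $x\mapsto x/\sqrt\beta$, the generic fibre of $\phi'$ over $K'$ descends to a variety over $F(\gamma)\subset\sO_{K'}$, hence has good (in particular split) reduction over $\sO_{K'}$; by \cite[Prop.~1.1.1]{ctsd94} the class $\alpha_{K'}$ is then unramified, and since $K'/K$ has ramification index $2$ with trivial residue extension, functoriality of residues gives $2\,\partial_{D_i}\tilde\alpha=\partial_{v'}(\alpha_{K'})=0$. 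Two smaller points: your appeal to Saltman's theorem is superfluous (once the boundary residues are killed by $2$, the class $2\tilde\alpha$ is unramified on $\P^1_F\times\P^1_F$, hence constant, with no need to know $2$-primarity in advance); and Proposition~\ref{app:prop:brauertorus} applies to classes that are honestly $2$-torsion, not merely $2$-torsion modulo $\Br(F)$, so you must first normalize $\tilde\alpha$ by subtracting the constant class given by its value at the rational point $(1,1)$ --- then $2\tilde\alpha$ is a constant class vanishing at $(1,1)$, hence zero --- which is exactly what the paper does.
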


\begin{proof}
The generic fibre of $\phi':X'\to \P^1_F \times \P^1_F$
is a Weil restriction of a smooth projective conic.
Applying Proposition~\ref{app:prop:brauerweilrestriction} to it,
we see
that any element of $\Br_{\nr}(F(X)/F)=\Br(X')$
can be written as $\phi'^*\alpha$
for some $\alpha \in \Br(F(\P^1_F \times \P^1_F))$.
As~$\phi'^*\alpha$ is unramified on~$X'$
and as the fibres of~$\phi'$ above $(\P^1_F \setminus \{0,\infty\})^2$ are split,
the class~$\alpha$ belongs to the subgroup
$\Br((\P^1_F \setminus \{0,\infty\})^2) \subset \Br(F(\P^1_F \times \P^1_F))$
(see \cite[Prop.~1.1.1]{ctsd94}).
After adding a constant class to~$\alpha$,
we may assume that the value of~$\alpha$ at the point~$(1,1)$
vanishes in~$\Br(F)$.
The next lemma
then implies that $2\alpha=0$
(\emph{op.\ cit.}, Prop.~1.3.3).
Applying Proposition~\ref{app:prop:brauertorus} now concludes the proof
of Proposition~\ref{app:prop:explicitgenerators}.
\end{proof}

\begin{lem}
\label{app:lem:residueskilledbytwo}
Let $i\in\{1,2\}$, $t\in\{0,\infty\}$.
Let $\xi \in\P^1_F \times \P^1_F$
be the generic point of the fibre, above~$t$, of the $i$th
projection
$\P^1_F \times \P^1_F \to \P^1_F$.
The residue of~$\alpha$ at $\xi$ is killed by~$2$.
\end{lem}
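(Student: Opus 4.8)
The plan is to leverage that $\phi'^\ast\alpha\in\Br(X')$ is unramified, together with the functoriality of residues under pullback. By the symmetry of $X$ under interchanging the two factors of $\P^1_F\times\P^1_F$ (which swaps the roles of $(\beta,B,a,b)$ and $(\gamma,C,d,c)$) and under $\beta\mapsto\beta^{-1}$, $\gamma\mapsto\gamma^{-1}$, it suffices to treat $i=1$, $t=0$, so that $\xi$ is the generic point of $\{\beta=0\}$ and $\kappa(\xi)=F(\gamma)$. First I would recall the standard residue-pullback formula: since $\phi'^\ast\alpha$ is unramified, for every codimension-$1$ point $\eta$ of $X'$ lying over $\xi$,
\[
e(\eta/\xi)\cdot\mathrm{res}_{\kappa(\xi)/\kappa(\eta)}(\partial_\xi\alpha)=\partial_\eta(\phi'^\ast\alpha)=0 ,
\]
where $e(\eta/\xi)$ is the ramification index and $\mathrm{res}$ is restriction on $\h^1(-,\Q/\Z)$. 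The whole point will be to find one component $\eta$ of the special fibre forcing $\partial_\xi\alpha$ to be killed by~$2$.

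Next I would analyse $\phi^{-1}(\xi)$. Over $\kappa(\xi)=F(\gamma)$ it is the Weil restriction along $F_{a,d}/F$ of the degenerate conic $\gamma C y^2=z^2$ in $\P^2$. The relation $z^2-\gamma C y^2=\beta B x^2$ shows that, étale-locally, $\beta$ cuts out the fibre; hence $\phi^{-1}(\xi)$ is a reduced divisor on the smooth variety $X''$, so every component $\eta$ satisfies $e(\eta/\xi)=1$. Geometrically the fibre is a product of four pairs of lines, the pair attached to an embedding of $F_{a,d}$ being conjugate over $\kappa(\xi)(\sqrt{\gamma C_j})$, where $C_1,C_2$ are the conjugates of $C$. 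Among the components I would single out the ``diagonal'' one $\eta_0$, obtained by choosing, compatibly under $\sqrt a\mapsto-\sqrt a$, the same line in the two factors sharing a given index $j$. Its field of definition $M_0$ is the subfield of $\kappa(\xi)(\sqrt a,\sqrt d,\sqrt{\gamma C_1},\sqrt{\gamma C_2})$ fixed by the two involutions $\tau_a,\tau_d$ flipping $\sqrt a$ resp.\ $\sqrt d$; in the generic case $[M_0:\kappa(\xi)]=4$. Since $\kappa(\eta_0)$ is purely transcendental over $M_0$, the map $\h^1(M_0,\Q/\Z)\hookrightarrow\h^1(\kappa(\eta_0),\Q/\Z)$ is injective, so the relation above with $\eta_0$ reads $\mathrm{res}_{\kappa(\xi)/M_0}(\partial_\xi\alpha)=0$.

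I would then conclude as follows. Writing $\rho:=\partial_\xi\alpha$ and letting $L/\kappa(\xi)$ be the cyclic extension cut out by the character $\rho$, the vanishing $\mathrm{res}_{\kappa(\xi)/M_0}\rho=0$ means $L\subseteq M_0$, so $\mathrm{ord}(\rho)=[L:\kappa(\xi)]$ divides $4$. The crucial observation is that $M_0/\kappa(\xi)$ is \emph{not} Galois: conjugating $\tau_d$ (which swaps $\sqrt{\gamma C_1}\leftrightarrow\sqrt{\gamma C_2}$) by the involution flipping $\sqrt{\gamma C_1}$ yields an automorphism outside $\langle\tau_a,\tau_d\rangle$, so $\langle\tau_a,\tau_d\rangle$ is not normal. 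Hence the quartic $M_0$ contains no cyclic quartic subextension, and $\mathrm{ord}(\rho)=4$ would force $L=M_0$ to be Galois — a contradiction. Therefore $\mathrm{ord}(\rho)\mid 2$, i.e.\ $2\partial_\xi\alpha=0$, as claimed.

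Finally I would dispose of the degenerate configurations in which some of $a,d,ad$ or $c$ (and symmetrically $b$) become squares: there either $F_{a,d}$ collapses or the splitting field $\kappa(\xi)(\sqrt{\gamma C})$ of the degenerate conic collapses, so the relevant multiplicity-$1$ component already has residue field of degree $\le 2$ over $\kappa(\xi)$, and the same relation gives $2\partial_\xi\alpha=0$ (often $\partial_\xi\alpha=0$) directly. The main obstacle is precisely the generic step of the previous two paragraphs: the Weil restriction inflates the naive degree-$2$ splitting field of the degenerate conic into a degree-$4$ field $M_0$, so a single component only bounds $\mathrm{ord}(\partial_\xi\alpha)$ by $4$; what upgrades ``killed by $4$'' to ``killed by $2$'' is the non-Galois (non-cyclic) nature of $M_0$. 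Carefully controlling the fields of definition and Galois orbits of the sixteen geometric components, and treating the collapses in the special cases uniformly, is the delicate part of the argument.
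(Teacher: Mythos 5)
Your route is genuinely different from the paper's: the paper never analyses the special fibre, but instead passes to the ramified quadratic extension $K'=F(\beta,\gamma)(\sqrt{\beta})$ (for $i=1$), observes that $X'\times_{\P^1_F\times\P^1_F}\Spec(K')$ has good reduction over the integral closure of $\sO_{K,\xi}$ in $K'$ --- because $\sqrt{\beta}$ can be absorbed into the coordinate $x$, so that the variety descends to $F(\gamma)$ --- and then obtains $2\partial_\xi\alpha=0$ from \cite[Prop.~1.1.1 and 1.1.2]{ctsd94}, the factor $2$ being exactly the ramification index of $K'/K$; this is uniform in $a,b,c,d$. Your component-by-component argument is essentially sound in the generic case, after one repair: $\phi^{-1}(\xi)$ is not a divisor on $X''$, which lives over $\Gm^2$; you must work with the model cut out by the same equation over $\A^1_\beta\times\Gm$, which is \emph{not} smooth (it is singular where two of the four coordinate pairs $(y_j,z_j)$ vanish), but is regular at the generic points of its special fibre, and that is all you need to get $e(\eta/\xi)=1$.

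The genuine gap is in your final paragraph on degenerate cases, and it occurs precisely in the configurations the paper cares about. Your list of degenerations ($a$, $d$, $ad$ or $c$ a square) is incomplete: the Galois group of $N:=\kappa(\xi)(\sqrt{a},\sqrt{d},\sqrt{\gamma C_1},\sqrt{\gamma C_2})$ also collapses when $ac$, $cd$ or $acd$ is a square, and the case where $cd$ is a square while $c,d$ are not (symmetrically, $ab$ a square for $i=2$) defeats your method. Indeed, there $\sqrt{c}\in F(\sqrt{d})$ with $\tau_d(\sqrt{c})=-\sqrt{c}$, so the identity $\sqrt{\gamma C_1}\sqrt{\gamma C_2}=\gamma\sqrt{c}$ forces every lift of $\sqrt{d}\mapsto-\sqrt{d}$ to act with order $4$ on the pair $\{\sqrt{\gamma C_1},\sqrt{\gamma C_2}\}$, whence $\Gal(N/\kappa(\xi))\cong\Z/2\times\Z/4$. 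The diagonal orbit then has field of definition $N^{\langle\tau_a\rangle}=\kappa(\xi)(\sqrt{\gamma C_1})$, which is a \emph{cyclic quartic} extension of $\kappa(\xi)$ --- neither of degree $\le 2$, as you assert, nor a non-Galois quartic --- so its vanishing condition is perfectly compatible with $\partial_\xi\alpha$ having order exactly $4$. In this situation one can still rescue the argument by combining two orbits (the anti-diagonal orbit is defined over $N^{\langle\tau_a\tau_d^2\rangle}$, and the intersection of the two quartic fields has degree $2$), but in the further degeneration where $a$ is \emph{also} a square, every single component of the special fibre has field of definition equal to the same cyclic quartic field $N=\kappa(\xi)(\sqrt{\gamma C_1})$, and then no combination of residues at components of this fibre can exclude an order-$4$ residue: the method, not merely the write-up, is insufficient there. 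Since these are exactly the configurations excluded from the Hasse-principle statement of Theorem~\ref{app:th:numberfield}, i.e.\ those in which the classes $(\beta,c)$, $(\gamma,b)$ may become unramified, they cannot be dismissed as easy; to handle them you need an additional input, such as the ramified base change $K(\sqrt{\beta})$ used in the paper.
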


\begin{proof}
Let $K=F(\P^1_F \times \P^1_F) = F(\beta,\gamma)$
and $\sO_K \subset K$ denote the local ring of $\P^1_F\times \P^1_F$
at~$\xi$.
Let $K'/K$ be the quadratic extension obtained by adjoining
a square root of~$\beta$ if $i=1$, or of~$\gamma$ if $i=2$.
Let $\sO_{K'}$ denote the integral closure of~$\sO_K$ in~$K'$.
To prove the lemma, it suffices to check that the image of $\alpha \in \Br(K)$
in~$\Br(K')$ belongs to the subgroup $\Br(\sO_{K'}) \subset \Br(K')$
(see \cite[Prop.~1.1.2]{ctsd94}).
For this, as $\phi'^*\alpha$ is unramified over~$X'$, it suffices
to check that the $K'$\nobreakdash-variety $X' \times_{\P^1_F \times \P^1_F} \Spec(K')$ admits
a proper regular model over~$\sO_{K'}$ whose special fibre is split
(\emph{loc.\ cit.}, Prop.~1.1.1; this property does not depend on the choice
of the model \cite[Cor.~1.2]{skorodescent}).
A look at the equations which define~$X''$
shows that
$X' \times_{\P^1_F \times \P^1_F} \Spec(K')$
even has good reduction
over~$\sO_{K'}$
as it descends to a variety over $F(\gamma) \subset \sO_{K'}$ if $i=1$,
or over $F(\beta) \subset \sO_{K'}$ if $i=2$.
\end{proof}

To exploit the hypothesis that the classes under consideration are unramified,
we shall repeatedly use the following tool.

\begin{prop}
\label{app:prop:tool}
Let $r,s \in F^*$ and $\varepsilon\in \{0,1\}$
be such that
$(\beta,r)+(\gamma,s)+\varepsilon(\beta,\gamma) \in \Br_\nr(F(X)/F)$.
Let $w\in F^*$.
\begin{enumerate}
\item
If $\phi^{-1}(\nu_{\gamma=w})$ contains a rational point, then $rw^\varepsilon$
is a square in~$F$.
\item
If $\phi^{-1}(\nu_{\beta=w\gamma})$ contains a rational point, then $rs(-w)^\varepsilon$
is a square in~$F$.
\end{enumerate}
\end{prop}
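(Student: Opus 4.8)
The plan is to read the conclusion off from the residues of the specialisation of $\alpha := (\beta,r)+(\gamma,s)+\varepsilon(\beta,\gamma) \in \Br(F(\beta,\gamma))$ along the relevant divisor of $\Gm^2$, using the existence of the rational point on the fibre to force that specialisation to be a \emph{constant} class.

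First I would record the two algebraic specialisations. The class $\alpha$ is unramified at the codimension-one point $\nu_{\gamma=w}$: along the divisor $\gamma=w$ each of $\beta,\gamma,r,s,w$ is a unit, so every quaternion symbol occurring in $\alpha$ has unit entries and trivial residue. Hence $\alpha$ lies in $\Br(\sO_{\Gm^2,\nu_{\gamma=w}})$, and substituting $\gamma=w$ gives $\alpha|_{\gamma=w}=(\beta,rw^\varepsilon)+(w,s)$, where $(w,s)$ is constant. The same computation at $\nu_{\beta=w\gamma}$ (again all entries are units there), after expanding $(w\gamma,r)$ and $(w\gamma,\gamma)$ and using $(\gamma,\gamma)=(\gamma,-1)$, yields $\alpha|_{\beta=w\gamma}=(\gamma,rs(-w)^\varepsilon)+(w,r)$, with $(w,r)$ constant.

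Next I would exploit the rational point. Suppose $P$ is a rational point of $\phi^{-1}(\nu_{\gamma=w})$; being a rational point of a fibre over $\kappa(\nu_{\gamma=w})=F(\beta)$, it is a point of $X''\subset X'$ with residue field $F(\beta)$ and $\phi'(P)=\nu_{\gamma=w}$. Its closure in the proper variety $X'$ is an integral curve with function field $F(\beta)\cong F(t)$; normalising produces the smooth proper model $\P^1_F$ together with a morphism $\P^1_F\to X'$. Since the image of $\alpha$ in $\Br(F(X))$ lies in $\Br_\nr(F(X)/F)=\Br(X')$ by hypothesis, its pull-back to $\P^1_F$ lies in $\Br(\P^1_F)=\Br(F)$, i.e.\ is constant; evaluating at the generic point recovers $(\phi^*\alpha)(P)=\alpha(\nu_{\gamma=w})=\alpha|_{\gamma=w}$, the identification of evaluation with specialisation being licensed by the unramifiedness established above. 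Therefore $\alpha|_{\gamma=w}$ is constant, whence $(\beta,rw^\varepsilon)$ is constant, and taking the residue at $\beta=0$ shows $rw^\varepsilon\in F^{*2}$. Part~(2) is identical, starting from a rational point of $\phi^{-1}(\nu_{\beta=w\gamma})$ with residue field $F(\gamma)$: one concludes that $(\gamma,rs(-w)^\varepsilon)$ is constant and reads off $rs(-w)^\varepsilon\in F^{*2}$ from the residue at $\gamma=0$.

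The symbol bookkeeping and the residue formula $\partial_{\beta=0}(\beta,t)=t \bmod F^{*2}$ are routine. The conceptual crux — the step I expect to require the most care — is the principle that evaluating an unramified class $\phi^*\alpha\in\Br(X')$ at a point whose residue field is the rational function field $F(\beta)$ yields a constant class: this is precisely where properness of $X'$, the identification $\Br_\nr(F(X)/F)=\Br(X')$, the passage to a morphism from $\P^1_F$, and the equality $\Br(\P^1_F)=\Br(F)$ all intervene, and where one must check compatibility of the total-space evaluation with the base specialisation.
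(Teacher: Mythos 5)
Your proof is correct and takes essentially the same route as the paper's: you specialise $\alpha$ along the divisor, use the rational point of the fibre to produce a proper rational curve mapping to $X'$ (your closure-of-$P$-plus-normalisation is the same curve as the paper's section $\bar\nu \to X'$ obtained from properness of $X'$), pull back the unramified class to see that the specialisation is constant, and read off the conclusion from the residue at the origin. The only difference is cosmetic: the paper phrases the final step as unramifiedness of $(t,r)+(w,s)+\varepsilon(t,w)$ over $\P^1_F$ rather than invoking $\Br(\P^1_F)=\Br(F)$ explicitly.
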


\begin{proof}
Let $\nu \in \{\nu_{\gamma=w},\nu_{\beta=w\gamma}\}$.
Let~$\bar\nu$ denote the Zariski closure of~$\nu$ in $\P^1_F \times \P^1_F$.
Suppose that $\phi^{-1}(\nu)$ contains a rational point.
The inclusion $\bar\nu \hookrightarrow \P^1_F \times \P^1_F$ then factors through~$\phi'$.
As $(\beta,r)+(\gamma,s)+\varepsilon(\beta,\gamma) \in \Br((\P^1_F\setminus\{0,\infty\})^2)$ becomes, by assumption, unramified over~$X'$ when pulled back to~$X''$,
its value at~$\nu$
must therefore belong to the subgroup $\Br(\bar\nu) \subset \Br(\nu)$.
Letting~$t$
denote the coordinate of~$\P^1_F$,
this means that
when $\nu=\nu_{\gamma=w}$ (resp., $\nu=\nu_{\beta=w\gamma}$),
the class 
$(t,r)+(w,s)+\varepsilon(t,w)$
(resp., $(wt,r)+(t,s)+\varepsilon(wt,t)$)
in $\Br(\P^1_F \setminus \{0,\infty\})$
must be unramified over~$\P^1_F$;
hence the proposition.
\end{proof}

The next three propositions complete the proof of Theorem~\ref{app:th:anyfield}.

\begin{prop}
\label{app:prop:alternative}
Let $r,s \in F^*$.
If the class
$(\beta,r)+(\gamma,s)+(\beta,\gamma) \in \Br(F(X))$
belongs to $\Br_{\nr}(F(X)/F)$,
then it belongs
to the image of the natural map $\Br(F)\to \Br(F(X))$.
\end{prop}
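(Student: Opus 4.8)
The plan is to exploit the conic bundle $\phi'\colon X'\to\P^1_F\times\P^1_F$ together with Proposition~\ref{app:prop:tool}, fed by rational points supplied by Proposition~\ref{app:prop:nulifts}. With $\varepsilon=1$, Proposition~\ref{app:prop:tool} says: a rational point in $\phi^{-1}(\nu_{\gamma=w})$ forces $rw\in F^{*2}$, and a rational point in $\phi^{-1}(\nu_{\beta=w\gamma})$ forces $-rsw\in F^{*2}$. The mechanism producing such points is that $\phi^{-1}(\nu_{\gamma=w})$ carries a rational point exactly when $wC$ is a square in $F_{a,d}$, so that the fibre conic $\beta Bx^2+wCy^2=z^2$ splits over $F_{a,d}(\beta)$ (and symmetrically for the $\beta$-fibres); this is precisely what Proposition~\ref{app:prop:nulifts} arranges in its various degenerate cases.

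First I would dispose of the cases in which Proposition~\ref{app:prop:nulifts} applies. If its hypotheses yield points in $\phi^{-1}(\nu_{\gamma=w})$, $\phi^{-1}(\nu_{\gamma=aw})$ and $\phi^{-1}(\nu_{\gamma=dw})$ while $a,d$ are non-squares, then Proposition~\ref{app:prop:tool} makes $rw$, $raw$ and $rdw$ simultaneously squares, forcing $a$ and $d$ to be squares — a contradiction, so no unramified class of this shape exists and the implication holds vacuously. The case $a=b=c=d$ is similar via Proposition~\ref{app:prop:nulifts}(2) and Proposition~\ref{app:prop:tool}(2): if these elements are non-squares one again reaches a contradiction, and if they are squares one is in the split situation below. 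When the squareness conditions are instead consistent, they pin down $r$ (and, symmetrically, $s$) modulo squares in terms of $C$ (and $B$), and I would conclude with the tautological point of the generic fibre: over $F_{a,d}(X)$ the equation $\beta Bx^2+\gamma Cy^2=1$ exhibits a point of the conic, so $(\beta B,\gamma C)=0$ there, i.e.\ $(\beta,\gamma)=(\beta,C)+(\gamma,B)+(B,C)$; substituting and using $rC,\,sB\in F_{a,d}^{*2}$ collapses $\omega$ to the constant $(B,C)$. The fully split case $a=b=c=d\in F^{*2}$ is the clean model: there the first-factor relation $(\beta,\gamma)=(\beta,C_1)+(\gamma,B_1)+(B_1,C_1)$, with $r\equiv C_1$ and $s\equiv B_1$, gives $\omega=(B_1,C_1)_F$.

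The genuinely generic case — where none of the hypotheses of Proposition~\ref{app:prop:nulifts} holds, so no special fibre is known to carry a rational point and Proposition~\ref{app:prop:tool} cannot be invoked — is the main obstacle; there I would instead show directly that $\phi'^*\omega$ is ramified, so that the hypothesis cannot be met. The natural place to look is the fibre of $\phi'$ over the generic point of $\{\beta=0\}$, which is the Weil restriction of the rank-$2$ conic $\gamma Cy^2=z^2$ and is non-split, and along which $\omega$ has residue $r\gamma\notin F(\gamma)^{*2}$; the content to check is that this residue is not absorbed by the multiplicities or by the residual extension of the fibre components. As a consistency check, corestricting the tautological relation from $F_{a,d}(X)$ and using $N_{F_a/F}(B)=b$, $N_{F_d/F}(C)=c$ with the projection formula gives $\cores_{F_{a,d}/F}(\beta B,\gamma C)=(b,c)_F\in\Br(F)$, so that (when $F_{a,d}$ is a field) $\ker\phi^*\subseteq\Br(F)$ and $\phi^*\omega$ cannot be constant — consistent with its being ramified. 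Carrying out this residue computation rigorously along the non-split, Weil-restricted fibres, and treating the intermediate degenerate cases in which $F_{a,d}$ is not a field, is the delicate part, and it is exactly where the fibration and splitness analysis of \S\ref{app:subsec:geometry} is brought to bear.
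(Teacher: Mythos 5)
Your treatment of the degenerate cases is essentially the paper's: feed Proposition~\ref{app:prop:nulifts} into Proposition~\ref{app:prop:tool} (with $\varepsilon=1$) to get contradictions or to pin down $r,s$, then use the conic relation $(\beta B_i,\gamma C_j)=0$ to collapse the class to a constant. But your proposal has a genuine gap exactly where you say the difficulty lies: the ``genuinely generic'' case is not proved, only re-stated as an unexecuted residue computation along the fibres of $\phi'$ over $\{\beta=0\}$. That computation is not routine: $X'$ is an \emph{abstract} smooth compactification, so the components, multiplicities and residue fields of $\phi'^{-1}(\nu_{\beta=0})$ in a regular model are not given by the displayed equations; and absorption of the residue $r\gamma$ genuinely does occur in the degenerate cases (that is why non-constant unramified classes of this shape exist at all when $ad$, $ab$ or $cd$ is a square), so any such argument must invoke the genericity hypotheses in an essential way that you have not identified. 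The idea you are missing, and which makes the paper's proof work \emph{without any} residue computation on $X'$, is that the hypothesis $\omega\in\Br_\nr(F(X)/F)$ is stable under extension of the ground field: one may replace $F$ by $F(\sqrt{d})$, $F(\sqrt{c}\mkern.7mu)$, $F(\sqrt{ac}\mkern.7mu)$, etc., chosen so that certain elements remain non-squares, force the hypotheses of Proposition~\ref{app:prop:nulifts} over the extension, apply Proposition~\ref{app:prop:tool} there, and translate the resulting squareness constraints back into disjunctions over $F$. Steps 1--4 of the paper's proof run this reduction and conclude that unramifiedness forces $ad$, and then $a$ and $d$ themselves, to be squares in $F$; the generic case is thereby emptied out rather than attacked head-on.

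A secondary flaw: your concluding computation, performed over $F_{a,d}(X)$ using $rC,\,sB\in F_{a,d}^{*2}$, only yields that the \emph{restriction} of $\omega$ to $F_{a,d}(X)$ is constant. When $F_{a,d}$ is a field this does not descend: restriction to a degree-$4$ extension has a nontrivial kernel, so constancy of $\omega_{F_{a,d}(X)}$ does not give $\omega\in\mathrm{Im}\mkern1mu\big(\Br(F)\to\Br(F(X))\big)$. The argument closes only once one knows $a,d\in F^{*2}$, so that $B_i,C_j\in F^*$ and the relations $(\beta B_i,\gamma C_j)=0$ hold in $\Br(F(X))$ itself -- which is precisely what the base-change steps you omit are there to establish (the paper's Step~5, pinning $r\equiv C_j$ and $s\equiv B_i$ modulo squares, again uses a base change to $F(\sqrt{c}\mkern.7mu)$ together with $C_1C_2=c$). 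Your ``consistency check'' via corestriction correctly computes $\cores(\beta B,\gamma C)=(b,c)$, but the inclusion $\ker\phi^*\subseteq\mathrm{Im}\mkern1mu\big(\Br(F)\big)$ it leans on is an Amitsur-type statement for Weil restrictions of conics that you would also need to justify; in the paper's architecture it is never needed.
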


\begin{proof}
We proceed in several steps.

Step 1: we claim that~$ad$ is a square in~$F$.

Suppose that~$ad$ is not a square in~$F$.  Then~$a$ and~$d$ are not both squares;
by symmetry, we may assume that~$a$ is not a square in~$F$;
after replacing~$F$ by~$F(\sqrt{d})$, we may then assume
that~$d$ is a square in~$F$.
As~$a$ is not a square in~$F$,
it cannot be a square in~$F(\sqrt{c}\mkern.7mu)$ and in~$F(\sqrt{ac}\mkern.7mu)$ at the same time.
After replacing~$F$ by one of these two extensions, we may therefore assume
that~$c$ or~$ac$ is a square.
The hypotheses of Proposition~\ref{app:prop:nulifts}~(1) are now met.
By Proposition~\ref{app:prop:tool}~(1) applied twice,
we deduce that~$rw$ and~$raw$ are squares in~$F$, hence~$a$ is a square in~$F$,
which is absurd.

Step 2: we claim that at least one of~$d$ and~$cd$ is a square in~$F$.

We may assume, after replacing~$F$ with~$F(\sqrt{c}\mkern.7mu)$,
that~$c$ is a square in~$F$.
Applying Proposition~\ref{app:prop:nulifts}~(1)
and Proposition~\ref{app:prop:tool}~(1),
we deduce that~$rw$ and~$rdw$ are squares in~$F$; hence~$d$ is a square in~$F$.

Step 3: we claim that at least one of~$a$ and~$ab$ is a square in~$F$.

This follows from Step~2, by symmetry.

Step 4: if $a=b=c=d$ in $F^*/F^{*2}$, then $a=b=c=d=1$ in $F^*/F^{*2}$.

Applying Proposition~\ref{app:prop:nulifts}~(2)
and Proposition~\ref{app:prop:tool}~(2),
we see that~$-rsw$ and $-rsaw$ are squares in~$F$, hence~$a$ is a square in~$F$.

Putting Steps~1 to~4 together, we have now proved that~$a$ and~$d$ are squares in~$F$.
Let us write $B=(B_1,B_2)$ and $C=(C_1,C_2)$ according to the decompositions
$F_a=F\times F$ and $F_d = F \times F$ induced by the choice of square roots of~$a$
and~$d$.

Step 5: we claim that $s=B_i$ and $r=C_j$ in $F^*/F^{*2}$ for some $i,j\in \{1,2\}$.

By symmetry, it suffices to check that $r=C_j$ in $F^*/F^{*2}$
for some $j\in\{1,2\}$.  To this end, 
as $C_1C_2=c$,
we may replace~$F$ with~$F(\sqrt{c}\mkern.7mu)$
and assume that~$c$ is a square in~$F$.
In this case, Proposition~\ref{app:prop:nulifts}~(3)
and Proposition~\ref{app:prop:tool}~(1)
together imply the claim.

As the equality $(\beta B_i,\gamma C_j)=0$ holds in $\Br(F(X))$ by the very definition of~$X$,
Step~5 implies that $(\beta,r)+(\gamma,s)+(\beta,\gamma)=(B_i,C_j)$,
which does come from $\Br(F)$.
\end{proof}

\begin{prop}
\label{app:prop:unramifiedrs}
Let $r,s \in F^*$ be such that
$(\beta,r)+(\gamma,s) \in \Br_{\nr}(F(X)/F)$.
Then at least one of the following holds:
\begin{enumerate}
\item $r$ and~$s$ are squares in~$F$;
\item $rc$ and~$s$ are squares in~$F$, at least one of $a$, $c$, $ad$, $cd$ is a square in~$F$,
and at least one of $a$, $b$, $c$, $d$, $abc$ is a square in~$F$;
\item $r$ and~$sb$ are squares in~$F$, at least one of $b$, $d$, $ad$, $ab$ is a square in~$F$,
and at least one of $a$, $b$, $c$, $d$, $bcd$ is a square in~$F$;
\item $rc$ and~$sb$ are squares in~$F$, at least one of $a$, $c$, $ad$, $cd$ is a square in~$F$,
and at least one of~$b$, $d$, $ad$, $ab$ is a square in~$F$.
\end{enumerate}
\end{prop}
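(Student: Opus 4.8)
The plan is to extract the stated constraints from the hypothesis $(\beta,r)+(\gamma,s)\in\Br_\nr(F(X)/F)$ by repeatedly passing to auxiliary extensions of~$F$ over which Proposition~\ref{app:prop:nulifts} becomes applicable, and then invoking Proposition~\ref{app:prop:tool} with $\varepsilon=0$. The base-change compatibility of~$X$ (together with the identification $\Br_\nr(F(X)/F)=\Br(X')$ for the chosen smooth compactification~$X'$, which shows that the class stays unramified over every extension~$F'$, since $\Br(X')$ pulls back into $\Br(X'_{F'})$) guarantees that the hypotheses of Proposition~\ref{app:prop:tool} remain in force after base change; and an element of~$F^*$ is a square in a multiquadratic extension~$F'/F$ precisely when it lies in the corresponding Kummer subgroup of~$F^*/F^{*2}$. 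All conclusions will therefore be read off as intersections of such subgroups.

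First I would fix the shape of~$r$ and~$s$. Over~$F(\sqrt c)$ the element~$c$ is a square, so the first alternative of Proposition~\ref{app:prop:nulifts}(1) furnishes a~$w$ for which~$\phi^{-1}(\nu_{\gamma=w})$ has a rational point, and Proposition~\ref{app:prop:tool}(1) (with $\varepsilon=0$) then forces~$r$ to be a square in~$F(\sqrt c)$. Since $F(\sqrt c)^{*2}\cap F^*=F^{*2}\cup cF^{*2}$, either~$r$ or~$rc$ is a square in~$F$. The variety~$X$ carries the symmetry exchanging $(\beta,B,a,b)$ with $(\gamma,C,d,c)$, hence $r\leftrightarrow s$ and $b\leftrightarrow c$; applying the mirror statements over~$F(\sqrt b)$ shows that either~$s$ or~$sb$ is a square. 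This already places us in one of the four listed cases, and shows that if~$r$ (resp.\ $s$) is not a square then~$c$ (resp.\ $b$) is not a square.

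Next I would produce the first arithmetic condition of cases~(2) and~(4). Suppose~$r$ is not a square. Over~$F(\sqrt d,\sqrt{ac})$ both~$d$ and~$ac$ are squares, so the second alternative of Proposition~\ref{app:prop:nulifts}(1) applies and Proposition~\ref{app:prop:tool}(1) forces~$r$ into the subgroup~$\langle d,ac\rangle$ of~$F^*/F^{*2}$. Intersecting with~$r\in\{1,c\}$ and using that~$r$ is a non-square gives~$c\in\langle d,ac\rangle$, which is equivalent to one of~$a,ad,cd$ being a square; thus at least one of~$a,c,ad,cd$ is a square. The symmetric argument over~$F(\sqrt a,\sqrt{bd})$, valid when~$s$ is not a square, yields that at least one of~$b,d,ad,ab$ is a square. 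These two facts settle case~(4) completely and supply the first condition in cases~(2) and~(3).

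It remains to obtain the second arithmetic condition in cases~(2) and~(3), where Proposition~\ref{app:prop:tool}(2) enters. In case~(2) we may take~$r=c$ and~$s=1$, so~$rs\equiv c$. Over~$F'=F(\sqrt{ab},\sqrt{ac},\sqrt{ad})$ one has~$a\equiv b\equiv c\equiv d$ modulo squares, so Proposition~\ref{app:prop:nulifts}(2) provides a rational point on some~$\phi^{-1}(\nu_{\beta=w\gamma})$ and Proposition~\ref{app:prop:tool}(2) forces~$c$ to be a square in~$F'$, i.e.\ $c\in\langle ab,ac,ad\rangle$. This alone only gives that one of $a,b,c,d,abc,abd,acd,bcd$ is a square, which is weaker than the asserted list. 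The main obstacle---and the one genuinely bookkeeping-heavy point---is to remove the three spurious members~$abd,acd,bcd$: one invokes the first condition just proved (one of~$a,ad,cd$ a square, with~$c$ a non-square) and checks in the three resulting sub-cases that each of~$abd,acd,bcd$ either collapses modulo squares into~$\{a,b,c,d,abc\}$ or is made redundant by the square already present, thereby forcing one of~$a,b,c,d,abc$ to be a square. Case~(3) follows from case~(2) by the same symmetry, and assembling these steps proves the proposition.
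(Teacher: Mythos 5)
Your proof is correct and takes essentially the same approach as the paper: the identical combination of Proposition~\ref{app:prop:nulifts} and Proposition~\ref{app:prop:tool} (with $\varepsilon=0$), applied after base change to $F(\sqrt{c})$, $F(\sqrt{b})$, $F(\sqrt{ac},\sqrt{d})$ and $F(\sqrt{a},\sqrt{bd})$, pins down the square classes of $r,s$ and the first conditions in cases (2)--(4). The only (valid) divergence is the final step: where the paper first uses its assertion that one of $a$, $c$, $ad$, $cd$ is a square to reduce to the case ``$ad$ or $cd$ is a square in $F$'' and then adjoins only $\sqrt{ab},\sqrt{ac}$, so that $c\in\langle ab,ac\rangle$ directly yields one of $a$, $b$, $c$, $abc$ square, you adjoin $\sqrt{ad}$ as well and afterwards prune $abd$, $acd$, $bcd$ from the longer list using the first condition --- a correct, if slightly heavier, piece of bookkeeping.
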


\begin{proof}
Extending the scalars from~$F$ to~$F(\sqrt{c}\mkern.7mu)$
and applying
Proposition~\ref{app:prop:nulifts}~(1)
and Proposition~\ref{app:prop:tool}~(1)
shows that~$r$ is a square in~$F(\sqrt{c}\mkern.7mu)$,
hence at least one of~$r$ and~$rc$ is a square in~$F$.  By symmetry, at least
one of~$s$ and~$sb$ is a square in~$F$.
The following two assertions and the symmetric assertions will now imply the proposition:
\begin{enumerate}
\item[(i)] if~$rc$ is a square in~$F$, then at least one of~$a$, $c$, $ad$, $cd$ is a square in~$F$;
\item[(ii)] if~$rc$ and~$s$ are squares in~$F$, then at least one of $a$, $b$, $c$, $d$, $abc$ is a square in~$F$.
\end{enumerate}
To prove~(i), we may replace~$F$ with~$F(\sqrt{ac},\sqrt{d})$
and assume that~$ac$ and~$d$ are squares in~$F$.
Proposition~\ref{app:prop:nulifts}~(1)
and Proposition~\ref{app:prop:tool}~(1)
then imply that~$r$ is a square in~$F$; if~$rc$ is a square in~$F$,
it follows that~$c$ is a square in~$F$, as desired.
To prove~(ii), we may assume, in view of~(i), that~$ad$ or~$cd$ is a square in~$F$.
We may then replace~$F$ with $F(\sqrt{ab},\sqrt{ac}\mkern.7mu)$
and assume that~$ab$ and~$ac$ are squares in~$F$.
In this case,
Proposition~\ref{app:prop:nulifts}~(2)
and Proposition~\ref{app:prop:tool}~(2) imply that~$c$ is a square in~$F$.
\end{proof}

\begin{prop}
If~$a$ or~$c$ is a square in~$F$,
then $(\beta,c)\in\mathrm{Im}\mkern1mu(\Br(F)\to\Br(F(X)))$.
If~$b$ or~$d$ is a square in~$F$,
then $(\gamma,b)\in\mathrm{Im}\mkern1mu(\Br(F)\to\Br(F(X)))$.
\end{prop}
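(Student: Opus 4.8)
The plan is to exploit the symmetry $a \leftrightarrow d$, $b \leftrightarrow c$, $\beta \leftrightarrow \gamma$, $B \leftrightarrow C$, which interchanges the two assertions; thus it suffices to prove the first one, namely that $(\beta,c)\in\mathrm{Im}(\Br(F)\to\Br(F(X)))$ when $a$ or $c$ is a square in $F$. If $c$ is a square, then the quaternion algebra $(\beta,c)$ is already split over $F(X)$, so it is zero and lies trivially in the image of $\Br(F)$. Hence the only case that requires work is when $a$ is a square in $F$.

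So assume $a$ is a square. First I would use a square root of $a$ to split $F_a \cong F \times F$ and write $B=(B_1,B_2)$, with $B_1,B_2\in F^\times$ and $B_1 B_2 = N_{F_a/F}(B)=b$; correspondingly $F_{a,d}=F_d\times F_d$. Projecting the defining equation $\beta B x^2 + \gamma C y^2 = 1$ of $X$ onto the first factor produces $\beta B_1 x_1^2 + \gamma C y_1^2 = 1$ over $F_d(X)$, where $x_1,y_1\in F_d(X)$ are the first components of $x,y$. Since $\beta B_1$ and $\gamma C$ are nonzero and this relation exhibits $1$ as a value of the binary form $\beta B_1 t^2 + \gamma C u^2$, I conclude that $(\beta B_1,\gamma C)=0$ in $\Br(F_d(X))$.

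The key step is then to descend this vanishing to $F(X)$. I would apply $\cores_{F_d(X)/F(X)}$ to the identity $(\beta B_1,\gamma C)=0$ and invoke the projection formula: as $\beta B_1\in F(X)^\times$ is restricted from $F(X)$, while $N_{F_d/F}(\gamma C)=\gamma^2 c\equiv c$ modulo squares, one gets $\cores((\beta B_1,\gamma C))=(\beta B_1,c)_{F(X)}$, whence $(\beta B_1,c)_{F(X)}=0$. Expanding, $(\beta,c)=(B_1,c)$ in $\Br(F(X))$, and since $B_1,c\in F^\times$ the class $(B_1,c)$ is pulled back from $\Br(F)$. This is exactly what is claimed. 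The second assertion follows by the symmetric argument, using $F_d\cong F\times F$ and corestriction from $F_a(X)$ (the case $b$ a square again being immediate).

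The only points demanding care --- and they are minor --- are the bookkeeping in the projection formula, namely tracking that $\beta B_1$ is the factor restricted from $F(X)$, and the remark that $F_d\otimes_F F(X)$ is a quadratic \'etale $F(X)$-algebra, so that corestriction and the norm computation $N(\gamma C)=\gamma^2 c$ remain valid even when this algebra fails to be a field. I do not anticipate any genuine obstacle beyond this routine verification.
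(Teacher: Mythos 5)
Your proposal is correct and follows essentially the same route as the paper's proof: reduce by symmetry, dispose of the trivial case where $c$ is a square, split $F_a = F\times F$ to write $B=(B_1,B_2)$, read off $(\beta B_1,\gamma C)=0$ over $F_d\otimes_F F(X)$ from the defining equation of $X$, and corestrict via the projection formula to get $(\beta B_1,c)=0$, hence $(\beta,c)=(B_1,c)\in\mathrm{Im}(\Br(F)\to\Br(F(X)))$. The only (harmless) cosmetic difference is that the paper records the vanishing for both components $B_1,B_2$, while you use just one.
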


\begin{proof}
By symmetry, we need only check the first assertion.
If~$c$ is a square, it is trivial.
Let us assume that~$a$ is a square in~$F$ and write $B=(B_1,B_2)$
according to the decomposition
$F_a=F\times F$
induced by the choice of a square root of~$a$.
The vanishing of the class $(\beta B,\gamma C) \in \Br(F(X) \otimes_F F_{a,d})$,
which holds by the very definition of~$X$, is then equivalent to that of
the two classes $(\beta B_i,\gamma C) \in \Br(F(X) \otimes_F F_d)$, $i\in \{1,2\}$.
Taking the norm down to~$F(X)$ and applying the projection formula,
we deduce that $(\beta B_i,c)=0$ in~$\Br(F(X))$ for $i \in \{1,2\}$.
Hence $(\beta,c)=(B_i,c)$, which does come from $\Br(F)$.
\end{proof}

\begin{rmk}
\label{app:rmk:unram-iff}
Proposition~\ref{app:prop:unramifiedrs}
provides
necessary conditions for the classes $(\beta,c)$, $(\gamma,b)$ and $(\beta,c)+(\gamma,b)$
to belong to $\Br_\nr(F(X)/F)$.
It is possible to show, although we do not do it here, 
that these necessary conditions are in fact necessary and sufficient.
\end{rmk}

\begin{ex}
\label{app:example}
Let $F=\Q$, $a=d=34$, $b=2$, $c=17$, $B=6+\sqrt{34}$, $C=(17+2\sqrt{34})/3$.
It is easy to see that~$X$ has points everywhere locally.
Using the fact that at every place of~$\Q$, at least one of~$2$, $17$, and~$34$
is a square,
one can check that for any place~$v$ of~$\Q$ other than~$17$
(resp., for $v=17$),
if $\beta_v \in F_v^*$ denotes the~$\beta$ coordinate of any $\Q_v$\nobreakdash-point
of~$X$,
the Hilbert symbol $(\beta_v,17)$ is trivial (resp., is nontrivial).
Hence $X(\Q)=\emptyset$.
Thus, in this case, the three classes $(a,b),(b,c),(c,d) \in \Br(\Q)$ vanish, but the Massey product $\langle a,b,c,d \rangle$ is not defined, by Theorem~\ref{thm-XF-splitting-variety} and Theorem~\ref{thm-main-number-fields}.
By Theorem \ref{thm-main-number-fields-generic}, for such an example to exist, it is necessary that at least one of $ad$, $ab$, $cd$ is a square.
\end{ex}


\newcommand{\noopsort}[1]{} \newcommand{\printfirst}[2]{#1}
  \newcommand{\singleletter}[1]{#1} \newcommand{\switchargs}[2]{#2#1}
  \def\cftil#1{\ifmmode\setbox7\hbox{$\accent"5E#1$}\else
  \setbox7\hbox{\accent"5E#1}\penalty 10000\relax\fi\raise 1\ht7
  \hbox{\lower1.15ex\hbox to 1\wd7{\hss\accent"7E\hss}}\penalty 10000
  \hskip-1\wd7\penalty 10000\box7}

\end{document}